\theoremstyle{plain}
\DeclareMathOperator{\var}{\mathbb Var}
\DeclareMathOperator{\tr}{tr}
\DeclareMathOperator{\sign}{sign}
\DeclareMathOperator{\perm}{perm}
\newcommand{\T}{\top}
\newcommand{\HH}{\mathbb H}
\newcommand{\HHd}{\mathbb H_\delta}
\newcommand{\PP}{\mathbb P}
\newcommand{\cA}{{\mathcal A}}
\newcommand{\cB}{{\mathcal B}}
\newcommand{\cD}{{\mathcal D}}
\newcommand{\cE}{{\mathcal E}}
\newcommand{\cI}{{\mathcal I}}
\newcommand{\cK}{{\mathcal K}}
\newcommand{\cO}{{\mathcal O}}
\newcommand{\bs}{\boldsymbol}
\newcommand{\one}{\bs 1}
\newcommand{\bA}{\bs A}
\newcommand{\bI}{\bs I}
\newcommand{\bJ}{\bs J}
\newcommand{\bP}{\bs P}
\newcommand{\bS}{\bs S}
\newcommand{\uS}{\mathsf S}
\newcommand{\ba}{\bs a}
\newcommand{\bd}{\bs d}
\newcommand{\bm}{\bs m}
\newcommand{\bp}{\bs p}
\newcommand{\bq}{\bs q}
\newcommand{\bss}{\bs s}
\newcommand{\obJ}{\overset{\circ}{\bJ}}
\newcommand{\C}{\mathbb{C}}
\newcommand{\N}{\mathbb{N}} 
\newcommand{\E}{\mathbb{E}}
\newtheorem{thm}{Theorem}[section]
\newtheorem{lem}[thm]{Lemma}
\newtheorem{prop}[thm]{Proposition}
\newtheorem{cor}[thm]{Corollary}
\theoremstyle{definition}
\theoremstyle{remark}
\newtheorem{rem}[thm]{Remark}
\newtheorem{assumption}[thm]{Assumption}
\newcommand{\toprobalong}{\xrightarrow[n\to\infty]{{\mathcal P}}}  
\newcommand{\toprobashort}{\xrightarrow[]{{\mathcal P}}}  
\newcommand{\R}{\mathbb{R}}
\newcommand{\vertiii}[1]{{\left\vert\kern-0.25ex\left\vert\kern-0.25ex\left\vert #1
    \right\vert\kern-0.25ex\right\vert\kern-0.25ex\right\vert}}
    \newcommand{\1}{\mathbbm{1}}
    \newcommand{\mathbbm}[1]{\text{\usefont{U}{bbm}{m}{n}#1}} 
\theoremstyle{remark}
\begin{document}

\begin{frontmatter}
\title{On the spectral radius and the characteristic polynomial 
of a random matrix with independent elements \\ 
and a variance profile}
\runtitle{Spectral radius of a random matrix with a variance profile}

\begin{aug}

\author{\fnms{Walid}~\snm{Hachem} }\ead[label=e1]
{walid.hachem@univ-eiffel.fr},
 \author{\fnms{Michail}~\snm{Louvaris}}\ead[label=e2]{louvarismixalis@gmail.com}
\address[]{CNRS, Laboratoire d’informatique Gaspard Monge (LIGM / UMR 8049),
 Universit\'e Gustave Eiffel, ESIEE Paris, France.
 \printead[presep={ ,\ }]{e1,e2}}

\end{aug}

\begin{abstract}
In this paper, it is shown that with large probability, the spectral radius of
a large non-Hermitian random matrix with a general variance profile does not
exceed the square root of the spectral radius of the variance profile matrix. A
minimal moment assumption is considered and sparse variance profiles are
covered.  Following an approach developed recently by Bordenave, Chafaï and
Garc\'{\i}a-Zelada, the key theorem states the asymptotic equivalence between
the reverse characteristic polynomial of the random matrix at hand and a random
analytic function which depends on the variance profile matrix. The result is
applied to the case of a non-Hermitian random matrix with a variance profile
given by a piecewise constant or a continuous non-negative function, the
inhomogeneous (centered) directed Erdős–Rényi model, and more. 
\end{abstract}

\begin{keyword}[class=MSC]
\kwd[Primary ]{60B20}
\kwd[; secondary ]{15A18}
\end{keyword}

\begin{keyword}
\kwd{Spectral radius}
\kwd{random matrices with variance profile}
\kwd{characteristic polynomial}
\end{keyword}

\end{frontmatter}

\section{Problem description and results} 
\label{sec-pb} 

Let $( W_{ij} )_{ij\geq 1}$ be an infinite array of complex-valued independent
and identically distributed random variables such that $\E W_{11} = 0$ and $\E
|W_{11}|^2 = 1$.  For each integer $n > 0$, let $S^{(n)} = \bigl[ s^{(n)}_{ij}
\bigr]_{i,j=1}^n$ be a $n\times n$ deterministic matrix with real non-negative
elements.  Consider the $\mathbb{C}^{n\times n}$--valued random matrix $X^{(n)}
= \begin{bmatrix} X^{(n)}_{ij} \end{bmatrix}_{i,j=1}^n$ which elements are
defined as 
\[
X^{(n)}_{ij} = \sqrt{s^{(n)}_{ij}} W_{ij} . 
\]
The purpose of this paper is to study the large--$n$ behavior of the spectral 
radius $\rho(X^{(n)})$ under general assumptions on the sequence of
matrices $(S^{(n)})$ that cover the sparse cases. These assumptions stand as 
follows: 
\begin{assumption} 
\label{bnd-S} 
The following hold true. 
\begin{enumerate}[(i)] 
\item\label{rowsum}
There exists a constant $C_S > 0$ such that at least one of the following 
bound holds: 
\[
\vertiii{S^{(n)}} \leq C_S \quad \text{or} \quad 
\vertiii{(S^{(n)})^\top} \leq C_S, 
\]
where $\vertiii{\cdot}$ is the max row $\ell^1$--norm of a matrix.
\item\label{bnd-sij}
There exists a positive sequence $(K_n)_{n\geq 1}$ tending to infinity, 
and there exists a constant $C'_S > 0$ such that
\[
s_{ij}^{(n)} \leq \frac{C'_S}{K_n}
\]
for all $n$ and all $i,j \in [n]$.
\end{enumerate} 
\end{assumption} 
\begin{assumption}
\label{bnd-det}
For each $\varepsilon \in(0,1]$, it holds that 
\[
 \liminf_n \min_{\gamma\in[0,1-\varepsilon]} 
       \det\left( I_n - \gamma S^{(n)} \right)  > 0 .
\]
\end{assumption}
Intuitively, this assumption is related with a rapid decrease of the magnitudes of the eigenvalues of $S^{(n)}$, as illustrated by the examples below.

This assumption can be re-expressed as 
\begin{subequations} 
\label{other} 
\begin{align}
& \limsup_n \rho(S^{(n)}) \leq 1, \quad \text{and} \label{rho<1} \\ 
& \forall\varepsilon \in (0,1], \ \liminf_n 
       \det\left( I_n - (1-\varepsilon) S^{(n)} \right)  > 0.  
\label{2bnd-det} 
\end{align}
\end{subequations}

Indeed, when Assumption~\ref{bnd-det} holds true, \eqref{2bnd-det} is obvious.
Moreover, since $S^{(n)}$ has non-negative elements, it has an eigenvalue which
is equal to $\rho(S^{(n)})$ \cite[Th.~8.3.1]{hor-joh-livre90},
hence~\eqref{rho<1}. Conversely, assume the conditions~\eqref{other} are 
satisfied. Since $\limsup \rho(S^{(n)}) \leq 1$, it holds that for each 
$\gamma\in[0,1)$, the series $\sum_{k\geq 1} \gamma^k \tr (S^{(n)})^k / k$ is 
convergent for each large enough $n$, and we can write for these $n$: 
\begin{equation}
\label{dev-det} 
\forall \gamma\in[0,1), \quad 
\det(I_n - \gamma S^{(n)}) = \exp\left( - \sum_{k=1}^\infty 
  \gamma^k \frac{\tr (S^{(n)})^k}{k} \right). 
\end{equation} 
This shows that $\gamma\mapsto \det(I_n - \gamma S^{(n)})$ is a non-negative
decreasing function on $[0,1)$, and~\eqref{2bnd-det} implies 
Assumption~\ref{bnd-det}. 

The following theorem is established in this paper: 
\begin{thm}
\label{th-main}
Let Assumptions~\ref{bnd-S} and \ref{bnd-det} hold true. Then, it holds that
\[
\forall \varepsilon > 0, \quad 
\PP\left[\rho\left(X^{(n)}\right) \geq 1 + \varepsilon \right] 
 \xrightarrow[n\to\infty]{} 0 . 
\]
\end{thm}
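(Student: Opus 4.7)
The plan is to reduce the spectral radius bound to a statement about the zeros of the reverse characteristic polynomial
\[
q_n(z) \;=\; \det\bigl(I_n - z X^{(n)}\bigr).
\]
Since the nonzero eigenvalues of $X^{(n)}$ are the reciprocals of the nonzero zeros of $q_n$, we have $\rho(X^{(n)}) \geq 1+\varepsilon$ if and only if $q_n$ has a zero in the closed disk of radius $(1+\varepsilon)^{-1}$. It therefore suffices to prove that for every $r < 1$, the probability that $q_n$ has a zero in $\{|z|\leq r\}$ tends to zero.

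To establish this, I would follow the Bordenave--Chafaï--García-Zelada strategy advertised in the abstract: show that $q_n$, viewed as a random analytic function on an open disk strictly larger than $\{|z|\leq r\}$, converges in distribution for the topology of local uniform convergence on compact sets to a limiting random analytic function $q_\infty$ that is almost surely nonvanishing on $\{|z|<1\}$. The natural candidate, as suggested by the formal expansion
\[
\log q_n(z) \;=\; -\sum_{k=1}^\infty \frac{z^k}{k}\,\tr\bigl(X^{(n)}\bigr)^k ,
\]
is an object of the form $q_\infty(z) = \exp(-\sum_{k\geq 1} z^k Y_k/k)$, where the $Y_k$ are (jointly complex Gaussian) random variables with covariance structure governed by the asymptotics of $\tr (S^{(n)})^k$. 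The identity \eqref{dev-det} combined with Assumption~\ref{bnd-det} then forces the resulting series to converge almost surely on $\{|z|<1\}$ and makes $q_\infty$ nonvanishing there, because its logarithm is well defined.

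The convergence step itself breaks into two pieces. Firstly, a truncation argument: one shows that on any compact subset of $\{|z|<1\}$, the contribution of indices $k > K$ in the series $\sum_k z^k \tr (X^{(n)})^k/k$ is uniformly negligible as $K\to\infty$. This is where Assumption~\ref{bnd-S} enters crucially: the operator-norm bound in~(i) and the uniform bound in~(ii) together control the combinatorial expansion of $\tr (X^{(n)})^k$ into closed walks of length $k$, and the suppression $s_{ij}^{(n)} \leq C'_S/K_n$ delivers the vanishing contribution of walks visiting repeated vertices, allowing a minimal moment assumption on $W_{11}$ to suffice. Secondly, for each fixed $k$, a central limit type statement establishes joint Gaussian convergence of $(\tr(X^{(n)})^k)_{k\leq K}$ to the variables $(Y_k)_{k\leq K}$; the variance computation should reduce, by the usual cycle enumeration, to $\tr (S^{(n)})^k$, whose asymptotics are controlled through \eqref{dev-det} and Assumption~\ref{bnd-det}.

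Once the convergence $q_n \longrightarrow q_\infty$ is in hand, Hurwitz's theorem closes the argument: since $q_\infty$ is almost surely nonvanishing on the closed disk $\{|z|\leq r\}$ for any $r<1$, and $q_n(0) = 1$, the probability that $q_n$ has a zero in $\{|z|\leq r\}$ tends to zero, yielding $\PP[\rho(X^{(n)})\geq 1+\varepsilon]\to 0$ after taking $r = (1+\varepsilon/2)^{-1}$. The main obstacle, I expect, is the truncation/tightness step under the combined constraints of only a second moment on $W_{11}$ and a sparse variance profile: showing that the high-order traces $\tr (X^{(n)})^k$ do not blow up requires a careful walk-counting argument, presumably after a preliminary truncation of the entries of $W$, and exploiting both bounds of Assumption~\ref{bnd-S} to rule out both cycles that overuse a single edge and cycles that are too long.
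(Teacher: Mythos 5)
Your overall architecture (pass to the reverse characteristic polynomial, obtain a limit description of $q_n$ as a random analytic function, then rule out zeros in $\overline D(0,r)$, $r=(1+\varepsilon)^{-1}$) is the same as the paper's, and your identification of the limit shape $\exp(-\sum_k z^k Y_k/k)$ with covariances governed by $\tr (S^{(n)})^k$ matches Theorem~\ref{th-chpol} (your $Y_k$ must carry the deterministic means coming from the even traces, which produce the factor $\kappa_n$). But there is a genuine gap precisely at the step you flag as the main obstacle, and your proposed route through it would not work under the bare assumption $\E|W_{11}|^2<\infty$. You propose to establish tightness and the truncation step by controlling the high-order traces $\tr (X^{(n)})^k$ via walk counting after truncating the entries of $W$. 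Second-moment information on $W_{11}$ gives no control of $\E|\tr (X^{(n)})^k|^2$ for $k\geq 2$ (this quantity involves moments of the entries up to order $2k$), and after truncating at level $M$ you would still have to show that the truncation error is negligible for the traces or for $\rho(X^{(n)})$ itself, which is exactly the difficulty that the trace-based approach cannot circumvent. The paper (following Bordenave--Chafa\"{\i}--Garc\'{\i}a-Zelada) avoids this by working with the coefficients $P_k^{(n)}=\sum_{|I|=k}\det X_I^{(n)}$ of $q_n$: by multilinearity each entry appears at most once in every term, so $\E|\det X_I|^2=\perm S_I$ and $\E|q_n(z)|^2=\perm(I+|z|^2S^{(n)})$ involve only second moments; the crucial Lemma~\ref{perm} then bounds $\perm(I+(1-\varepsilon)S^{(n)})$ uniformly in $n$ by relating it, through Kershaw's Fredholm permanent identity, to the determinant bound of Assumption~\ref{bnd-det}. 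This permanent bound yields both the tightness of $(q_n)$ and, via Lemma~\ref{P-PM}, the bound $\sup_n\E|P_k^{(n)}-P_k^{(n,M)}|^2\to 0$ as $M\to\infty$; the moment/walk-counting machinery is used only for the finite-dimensional statistics $(\tr X^{(n)},\ldots,\tr (X^{(n)})^k)$ with $k$ fixed and $W$ already truncated. Without the permanent lemma (or a substitute), your sketch has no mechanism to close the tightness and de-truncation steps.

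A secondary, fixable inaccuracy: in this variance-profile setting the distribution of $q_n$ need not converge at all, so you cannot assert convergence in distribution to a single $q_\infty$ nonvanishing on $\{|z|<1\}$. The paper instead proves the asymptotic equivalence $q_n\sim_n\kappa_n\exp(-F_n)$ in $\HHd$ (not $\HH$; see Remark~\ref{H-Hd}, where $\kappa_n\exp(-F_n)$ may fail to be analytic on the full unit disk), and then concludes by extracting subsequential limits of $\kappa_n\exp(-F_n)$, using the continuity of $f\mapsto\min_{|z|\leq r}|f(z)|$ and the non-vanishing of the limits guaranteed by~\eqref{bnd-kappa}. Your Hurwitz-type endgame is in the same spirit and survives this correction, but it must be run along subsequences and on $D(0,1-\delta)$ rather than against a single limit on $D(0,1)$.
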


Let us sketch some application examples to shed some light on the assumptions
and the result. These will be clarified and detailed in the next section. To
begin with, assume that $S^{(n)}$ is a block variance profile matrix with a 
fixed number of rectangular blocks which dimensions are of order $n$, and which
elements are of order $1/n$.  Then, $S^{(n)}$ satisfies Assumption~\ref{bnd-S}
with $K_n = n$.  Assume that the spectral radius of $S^{(n)}$ is of order one.
By normalizing this matrix with its spectral radius, Condition~\eqref{rho<1} is
satisfied.  Furthermore, since the rank of $S^{(n)}$ is bounded by a constant,
Condition~\eqref{2bnd-det} is also satisfied, and Theorem~\ref{th-main} asserts
that with high probability, $\rho(X^{(n)})$ cannot be larger and away of
$\sqrt{\rho(S^{(n)})}$ before the normalization. 

A similar conclusion can be obtained when $S^{(n)}$ is obtained by a regular
sampling of a continuous non-negative function on the rectangle $[0,1]^2$.
Assumption~\ref{bnd-S} will still be satisfied with $K_n = n$.  Let us turn to
Condition~\eqref{2bnd-det}. Here, the rank of $S^{(n)}$ is no more necessarily
bounded.  However, this condition will still be satisfied after the proper
normalization because, roughly speaking, $S^{(n)}$ will have only a few
non-negligible eigenvalues. This is due to the fact that $S^{(n)}$ is a
discrete approximation of a continuous function on $[0,1]^2$, which is as is
well known a compact trace-class operator on the Banach space $C([0,1])$ of 
the continuous functions on $[0,1]$. 

Of particular interest are the situations where $K_n = o(n)$, that we refer to
as the ``sparse'' cases, where, typically, the number of non zero elements of
$S^{(n)}$ per row belongs to the interval $[cK_n, C K_n]$ where $0 < c < C <
\infty$, and these elements are bounded by $C'/K_n$ for $C' > 0$. Examples of
these cases where Assumption~\ref{bnd-det} is satisfied will be detailed below. 
\\

Theorem~\ref{th-main} only provides an upper bound on the spectral radius
$\rho(X^{(n)})$.  One can expect the stronger result
$\PP\left[|\rho\left(X\right) - 1 | \geq \varepsilon \right] \to 0$, which
requires showing that $$\PP\left[\rho\left(X^{(n)}\right) \leq 1 - \varepsilon
\right] \to 0$$ when $\rho(S^{(n)})$ is close to one.  One way of establishing
this last result is to establish a so-called global law on the spectral measure
of $X^{(n)}$, showing that this spectral measure can be approximated for all
large $n$ with a distribution supported by the closed unit disk.  For matrices
with variance profiles, global laws were established in the literature in some
non-sparse situations where $K_n = n$ and where the $\ell^1$ norms of all the
rows and the columns of $S^{(n)}$ are of order one. The first of such results
was revealed by Girko \cite{gir-livre01}. A global law was rigorously
established by Alt \emph{et.al.}~in \cite{alt-erd-kru-18} under moment and
density assumptions and in the case where all the numbers $n s^{(n)}_{ij}$
belong to a compact interval lying away from zero (the so-called ``flat''
variance profile).We note that, beyond the global law, the large-$n$ behavior of the spectral radius is also analyzed in~\cite{alt-erd-kru-18} and \cite{cipolloni2312universality} through the establishment of local law results. For a slightly more general model, see also Remark 1.7 in~\cite{bao2025signal}. Regarding the spectral radius in \cite{alt-erd-kru-21} the authors prove a result analogous to Theorem \ref{th-main} under the assumption of a "flat" variance profile. Our result extends [\cite{alt-erd-kru-21}, Theorem 2.1] beyond the flat variance profile case, as it encompasses a broader class of models, see Examples \ref{erdos_reyni_subsection} and \ref{d-reg-out}, and is derived using more elementary methods. Beyond the flat variance profile in the non-sparse
case, the global law was established in \cite{coo-hac-naj-ren-18} under a
so-called robust irreducibility assumption on the variance profile, and a
moment assumption on the matrix entries.   \\ 

Regarding the applications, the control of the spectral radius of $X^{(n)}$ is
essential in the study of many dynamical systems that arise in the fields of
control theory, natural or artificial neural networks, theoretical biology and
ecology, and others.  For instance, in neural networks, $X^{(n)}$ is the matrix
that represents the couplings between $n$ neurons \cite{som-cri-som-88}.  In
theoretical ecology, $X^{(n)}$ is used to model the random food interactions
between $n$ living species that coexist within an ecosystem \cite{akj-etal-24}.
In these situations, the inhomogeneity of the matrix model represented by the
variance profile, or the sparsity of its non-zero elements are often advocated
to model realistic situations.  The transition of such systems from
stationary to chaotic dynamics is often driven by $\rho(X^{(n)})$. \\ 

To obtain Theorem~\ref{th-main}, we use the approach based on the
reverse characteristic polynomial of $X^{(n)}$ that Bordenave, Chafaï and
Garc\'{\i}a-Zelada developed in \cite{bordenave2021convergence} to deal with
the case $s_{ij}^{(n)} = 1/n$, and that was partially inspired by the article
\cite{bas-zei-20} devoted to a different problem. Observe that no moment of the
random variables $W_{ij}$ beyond the second moment is required in the statement
of Theorem~\ref{th-main}.  This is a prominent feature of the approach
of~\cite{bordenave2021convergence}, which improves upon the older literature
such as \cite{bai-yin-86,gem-86,bor-cap-cha-tik-18}.  In the recent literature,
the approach of~\cite{bordenave2021convergence} for controlling the spectral
radius was applied for the Elliptic Ginibre model in \cite{fra-gar-23}.  In the
same vein, the recent papers \cite{coste2023sparse} and \cite{cos-lam-yiz-24},
consider the characteristic polynomial of sparse Bernoulli matrices and sums of
random permutations and regular digraphs.

In the remainder, we fix a real number $\delta > 0$ to a value as small as we
wish, and we denote as $\HHd$ the space of holomorphic functions on the open
disk $D(0,1-\delta)$ of $\C$ with center zero and radius $1-\delta$ equipped
with the topology of the uniform convergence on the compacts of
$D(0,1-\delta)$. We also denote as $\HH$ the space of holomorphic functions on
the open unit-disk $D(0,1)$.  As is well known, these spaces are Polish spaces.

Let us consider the reverse characteristic polynomial of the matrix $X^{(n)}$, 
which is defined as 
\[
q_n(z)= \det\left( I_n -z X^{(n)}\right) .
\]
Obviously, $q_n$ is a $\HHd$--valued random variable.  Our paper is mainly
devoted towards studying the asymptotic behavior of the probability
distribution of $q_n$ on $\HHd$.  Here, a notation is in order.  Let $(U_n)$ and
$(V_n)$ be two sequences of random variables valued in some metric space. For
each $n$, let $\mu_n$ and $\nu_n$ be the probability distributions of $U_n$ and
$V_n$ respectively. We shall use the notation 
\[
  U_n \sim_n V_n
\]
to refer to the facts that the sequences $(\mu_n)$ and $(\nu_n)$ are relatively
compact, and that 
\[
   \int f d\mu_n - \int f d \nu_n \xrightarrow[n\to\infty]{}  0
\]
for each bounded continuous real function $f$ on the metric space. We shall say
then that $(U_n)$ and $(V_n)$ are ``asymptotically equivalent''. Note that
$(\mu_n)$ and $(\nu_n)$ do not necessarily converge narrowly to some
probability distribution. This setting is well-suited to describe the
asymptotics of our sequence $(q_n)$ because without an additional assumption on
the construction of the sequence $(S^{(n)})$, the distribution of $q_n$ has no
reason to converge narrowly to a limit probability measure on $\HHd$.
These asymptotics are described by the following theorem, which will be proven
in Section~\ref{sec-prfmain}: 
\begin{thm}
\label{th-chpol}
Let Assumptions~\ref{bnd-S} and~\ref{bnd-det} hold true. Then, for all large
$n$, the function 
\[
 \kappa_n(z)= \sqrt{\det(I- z^2\E W^2_{1,1}S^{(n)})} 
\] 
is a well-defined element of $\HHd$ with the square root being the one for 
which $\kappa_n(0) = 1$. The function 
\[
F_n(z)=\sum_{k=1}^{\infty} z^k Z_k \sqrt{\frac{\tr((S^{(n)})^k)}{k}}, 
\] 
where $(Z_k)_{k = 1,2,\ldots}$ is a sequence of independent complex Gaussian 
random variables such that
\[
\E Z_k=0, \quad \E |Z_k|^2=1, \quad \text{and} \quad 
  \E Z_k^2= (\E W^2_{11} )^{k}. 
\] 
is a well-defined $\HHd$--valued random variable. The sequence $(F_n)$ is tight 
in $\HHd$, and the sequence $(\kappa_n)$ satisfies for each compact set 
$\cK \subset D(0,1-\delta)$:
\begin{equation}
\label{bnd-kappa}
0 < \liminf_n \min_{z\in\cK} |\kappa_n(z)| \leq
    \limsup_n \max_{z\in\cK} |\kappa_n(z)| < \infty.
\end{equation} 
Finally, it is true that 
\begin{equation} 
\label{asymptotic_equivalence_of_qn}
   q_n(z) \sim_n \kappa_n(z) \exp (-F_n(z)) 
\end{equation} 
as $\HHd$--valued random variables. 
\end{thm}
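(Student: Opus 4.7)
The plan is to pass to the logarithmic level and match the Taylor expansions term by term. On $D(0,1-\delta)$ one has $\log q_n(z) = -\sum_{k\ge 1}\frac{z^k}{k}T_k^{(n)}$ with $T_k^{(n)}:=\tr((X^{(n)})^k)$, while the expansion~\eqref{dev-det} yields $\log\kappa_n(z) = -\frac{1}{2}\sum_{k\ge 1}\frac{(\E W_{11}^2)^k z^{2k}}{k}\tr((S^{(n)})^k)$, which is well defined on $D(0,1-\delta)$ by Assumption~\ref{bnd-det} and $|\E W_{11}^2|\le 1$. The two-sided bound~\eqref{bnd-kappa} then follows from a continuity-and-compactness argument on $[0,1-\delta]$ applied to $\gamma\mapsto \det(I_n-\gamma^2(\E W_{11}^2)S^{(n)})$ combined with Assumption~\ref{bnd-S}(i), and tightness of $F_n$ in $\HHd$ follows from $\E\sup_{|z|\le 1-\delta}|F_n(z)|^2 \lesssim \sum_{k\ge 1}(1-\delta)^{2k}\tr((S^{(n)})^k)/k$, which is uniformly bounded thanks to~\eqref{dev-det}.

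Following \cite{bordenave2021convergence,bas-zei-20}, I would first truncate the i.i.d.\ entries, replacing $W_{ij}$ by $\widetilde W_{ij} := (W_{ij}\1_{|W_{ij}|\le t_n} - \E W_{ij}\1_{|W_{ij}|\le t_n})/\sigma_n$ for a slowly growing $t_n\to\infty$ and normalizing $\sigma_n\to 1$; by Assumption~\ref{bnd-S}(ii) and a Hoffman--Wielandt-type perturbation estimate on the determinant, the corresponding reverse characteristic polynomial $\widetilde q_n$ satisfies $q_n - \widetilde q_n \to 0$ in probability in $\HHd$, so one may work with the bounded-entry model whose moments all exist. The core claim is then a joint asymptotic equivalence for each fixed $K\ge 1$:
\[
  (T_1^{(n)},\ldots,T_K^{(n)}) \sim_n \bigl(m_k^{(n)} + \sqrt{k\,\tr((S^{(n)})^k)}\,\xi_k\bigr)_{k=1}^K ,
\]
where $m_k^{(n)} := (\E W_{11}^2)^{k/2}\tr((S^{(n)})^{k/2})$ for even $k$, $m_k^{(n)} := 0$ for odd $k$, and $(\xi_k)$ is a sequence of independent complex Gaussians with $\E\xi_k = 0$, $\E|\xi_k|^2 = 1$, $\E\xi_k^2 = (\E W_{11}^2)^k$.

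This joint CLT for traces is the combinatorial heart of the proof, and the main obstacle. Expanding $T_k^{(n)} = \sum_{i_1,\ldots,i_k}\prod_{j=1}^k X^{(n)}_{i_j i_{j+1}}$ (indices mod $k$), one classifies closed walks into three groups. First, the ``double-wraparound'' walks for even $k=2m$, obtained by traversing an $m$-cycle twice with matching orientations, yield the deterministic contribution $(\E W_{11}^2)^m \tr((S^{(n)})^m) = m_k^{(n)}$, since each edge variance $\E X_{ij}^2 = (\E W_{11}^2)\,s_{ij}^{(n)}$ contributes once. Second, the ``simple cycles'' with all $k$ vertices distinct contribute the Gaussian fluctuation: being sums of independent mean-zero products, one checks (as sketched above) that their total variance is $\approx k\,\tr((S^{(n)})^k)$ and pseudo-variance $\approx k (\E W_{11}^2)^k \tr((S^{(n)})^k)$, and a Lindeberg CLT applies whose negligibility condition is provided by the pointwise bound $s_{ij}^{(n)}\le C_S'/K_n$ from Assumption~\ref{bnd-S}(ii). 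Third, all remaining closed walks---those with collisions, self-intersections, or some edge having odd multiplicity---contribute negligibly, which is shown via a bookkeeping over walk topologies weighted by the row/column $\ell^1$-bound from Assumption~\ref{bnd-S}(i). Handling this error term uniformly under a general (possibly sparse) variance profile, without the symmetry afforded by a flat $1/n$ weight, is the most delicate part of the argument.

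Finally, to upgrade the finite-$K$ statement to asymptotic equivalence of $\HHd$-valued random functions, I would supply a uniform tail estimate of the form
\[
\lim_{K\to\infty}\limsup_n \E\sup_{|z|\le 1-\delta}\Bigl|\sum_{k>K}\tfrac{z^k}{k}\bigl(T_k^{(n)}-m_k^{(n)}-\sqrt{k\,\tr((S^{(n)})^k)}\,\xi_k\bigr)\Bigr| = 0,
\]
which again draws on Assumption~\ref{bnd-S} and~\eqref{dev-det}. Combined with tightness of $(q_n)$ in $\HHd$ (deduced from the same type of trace moment bounds) and the uniform control~\eqref{bnd-kappa} on $\kappa_n$, this yields $\log q_n \sim_n \log\kappa_n - F_n$ in $\HHd$, and a continuous mapping argument applied to the exponential transfers this to the claimed $q_n \sim_n \kappa_n\exp(-F_n)$.
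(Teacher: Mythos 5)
Your central device---treating $\log q_n(z)=-\sum_{k\ge1}z^k T_k^{(n)}/k$ as an identity on all of $D(0,1-\delta)$ and then closing the argument with the uniform tail estimate $\lim_{K}\limsup_n\E\sup_{|z|\le1-\delta}\bigl|\sum_{k>K}z^k(\cdots)/k\bigr|=0$---does not work. The series $\sum_k z^k\tr((X^{(n)})^k)/k$ has radius of convergence $1/\rho(X^{(n)})$, and for every finite $n$ the event $\rho(X^{(n)})>1/(1-\delta)$ has positive probability (ruling this out asymptotically is Theorem~\ref{th-main}, which is deduced \emph{from} the present theorem, so it cannot be assumed). On that event $q_n$ vanishes inside $D(0,1-\delta)$, so $\log q_n$ is not an $\HHd$--valued random variable, and the tail supremum is infinite; hence its expectation is infinite for every $K$ and every $n$, and the concluding ``continuous mapping through the exponential'' has nothing to act on. The paper sidesteps this entirely: the identity~\eqref{q_=tr} is used only for $|z|$ small, and only to extract the fixed, $n$-independent polynomial relation between the coefficients $P_k^{(n)}$ of $q_n$ and the first $k$ traces; the comparison with $\kappa_n e^{-F_n}$ is then made coefficientwise (Proposition~\ref{fin-dim} combined with Proposition~\ref{propcvg}), with tightness of $(q_n)$ supplied separately by the exact identity $\E|q_n(z)|^2=\perm(I+|z|^2S^{(n)})$ and the uniform permanent bound of Lemma~\ref{perm} (Kershaw's Fredholm permanent theory under Assumption~\ref{bnd-det}). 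Your aside that tightness of $(q_n)$ follows ``from the same type of trace moment bounds'' is likewise unfounded: tightness concerns all coefficients $P_k^{(n)}$, $k\le n$, and it is the permanent estimate, not trace bounds, that delivers it.

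The truncation step is also unjustified. Truncating at a growing level $t_n$ and asserting $q_n-\widetilde q_n\to0$ in probability in $\HHd$ via a ``Hoffman--Wielandt-type perturbation estimate on the determinant'' fails: with only two moments the truncation error is small entrywise in $L^2$, but $\det(I-zX)$ is not controlled by any such norm perturbation, and Hoffman--Wielandt concerns eigenvalues of Hermitian/normal perturbations, not characteristic polynomials of non-normal matrices. The point of the scheme the paper follows is precisely that the truncation (at a \emph{fixed} level $M$, with $M\to\infty$ after $n\to\infty$) is controlled on the coefficients: $\sup_n\E|P_k^{(n)}-P_k^{(n,M)}|^2\le\varepsilon_M$ (Lemma~\ref{P-PM}), again through the permanent bound. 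Two further, smaller issues: the ``simple-cycle'' sums $R_k$ are not sums of independent terms (distinct cycles in $\cD_k$ may share edges), so a Lindeberg CLT does not apply off the shelf---the paper computes joint moments and invokes Wick/Isserlis (Lemma~\ref{Rk}); and the deterministic part requires $Q_k-\bm_k\to0$ in probability (Lemma~\ref{Qk}), whose error terms are tamed by Lemma~\ref{errS} using $\|S^{(n)}\|_\infty\le C/K_n$ and the row-sum bound---the uniform bookkeeping over walk mergers under a general sparse profile is exactly where the work lies, and your sketch leaves it at the level of intent.
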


Theorem~\ref{th-main} can be deduced from Theorem~\ref{th-chpol} by an argument
provided in Section~\ref{subs_proof_of_spectral_radius}.  

\begin{rem}
\label{H-Hd} 
In the case where the variances of the elements of $X^{(n)}$ are equal to 
$1/n$, we have $S^{(n)} = n^{-1} \one_n \one_n^\T$ where $\one_n$ is the
vector of all ones in $\R^n$. In this case, it is obvious that $\kappa_n$ and
$F_n$ are independent of $n$ and are given as 
$\kappa_n(z) = \kappa(z) := \sqrt{1 - z^2 \E W^2_{1,1}}$ and 
$F_n(z) = F(z) := \sum_{k=1}^\infty z^k Z_k / \sqrt{k}$. By a straightforward 
modification of the proof of Theorem~\ref{th-chpol}, the asymptotic
equivalence~\eqref{asymptotic_equivalence_of_qn} in the space of 
$\HHd$--valued random variables can be replaced with the convergence in 
distribution of $q_n$ towards $\kappa \exp(-F)$ in the space of $\HH$--valued 
random variables, as stated in~\cite{bordenave2021convergence}. 
In our more general situation, we need to replace $\HH$ with $\HHd$, as the 
following example shows: when $\E W_{11}^2 = 1$ and $\rho(S^{(n)}) > 1$ for 
each $n$, no function $\kappa_n \exp(-F_n)$ belongs to $\HH$. We thank one 
of the referees for pointing out this remark. 
\end{rem}

\section{Case studies} 
\label{sec-app} 

In this section, we describe some matrix models for which
Assumptions~\ref{bnd-S} and~\ref{bnd-det} are satisfied.  The proofs related
with this section are provided in Section~\ref{prf-cases}.

\subsection{Block variance profile}
Fix $d, m \in \N\setminus\{0\}$, and let $\bA$ be a $d \times m$ matrix with 
positive entries. Set $p = md$. For each $n > 0$, define the matrix $A^{(pn)}$ 
as 
\begin{align*}
A^{(pn)} = \frac{1}{pn} \bA \otimes (\one_{mn} \one_{dn}^\T) \in 
 \R^{pn \times pn},  
\end{align*}
where $\otimes$ denotes the Kronecker product of matrices. Thus, $A^{(pn)}$ 
consists in $md$ rectangular blocks of positive numbers, and the dimensions of 
each block scale with $n$. 

\begin{prop}\label{Pw-Cst}
In the above setting, $\rho(A^{(pn)}) > 0$, and the matrix 
$$S^{(pn)} = \rho(A^{(pn)})^{-1} A^{(pn)}$$ satisfies Assumptions~\ref{bnd-S} 
and~\ref{bnd-det}. 
\end{prop}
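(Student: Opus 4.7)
The core of the proof is a dimensional reduction using that $A^{(pn)}$, being a scalar multiple of the Kronecker product of $\bA$ with the rank-one matrix $\one_{mn}\one_{dn}^{\T}$, has rank at most $\min(d,m)$ independently of $n$. Let $P := I_m \otimes \one_{dn}^{\T}$ be the $m \times pn$ matrix sending a vector to its $m$ column-block sums (each of width $dn$), and let $Q := I_d \otimes \one_{mn}$ be the $pn \times d$ matrix broadcasting an entry of $\R^d$ uniformly across its row-block (of height $mn$). Then $A^{(pn)} = (pn)^{-1} Q \bA P$, and a direct computation gives $PQ = n\,\tilde{\tau}^{\T}$, where $\tilde{\tau} \in \R_{\geq 0}^{d\times m}$ is a fixed matrix whose entry $\tilde{\tau}_{ij}$ is the overlap between row-block $i$ and column-block $j$ divided by $n$, and every row of which has a strictly positive entry. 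Applying Sylvester's determinant identity to $U = Q$, $V = \bA P$ then yields $\det(\lambda I_{pn} - A^{(pn)}) = \lambda^{pn - d}\det(\lambda I_d - T)$, where $T := p^{-1}\bA \tilde{\tau}^{\T}$ is a fixed $d \times d$ matrix with strictly positive entries (by positivity of $\bA$ and row-positivity of $\tilde{\tau}$).

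From this reduction, the Perron-Frobenius theorem gives $\rho(A^{(pn)}) = \rho(T) > 0$ for every $n$, so $S^{(pn)} := A^{(pn)}/\rho(T)$ is well-defined and $\rho(T)^{-1}$ is an $n$-independent constant. The three conditions then follow routinely. Assumption~\ref{bnd-S}(i) holds because the $\ell^1$-norm of any row in row-block $i$ of $A^{(pn)}$ equals $m^{-1}\sum_j \bA_{ij}$, uniformly bounded in $n$ and $i$. Assumption~\ref{bnd-S}(ii) holds with $K_n = n$ since every entry of $A^{(pn)}$ is at most $(\max_{ij}\bA_{ij})/(pn) = O(1/n)$. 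For Assumption~\ref{bnd-det}, the characteristic polynomial identity above gives $\det(I_{pn} - \gamma S^{(pn)}) = \det(I_d - \gamma T/\rho(T))$ for every $\gamma$; the right-hand side is a real polynomial in $\gamma$, independent of $n$, equal to $1$ at $\gamma = 0$, whose roots are $\gamma = 1/\mu$ for non-zero eigenvalues $\mu$ of $T/\rho(T)$ and thus all lie outside the open unit disk (since $|\mu|\le 1$). Hence this polynomial is strictly positive and bounded below by a positive constant on the compact interval $[0, 1-\varepsilon]$.

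The only non-routine step is setting up the reduction $A^{(pn)} \mapsto T$ through the factorization $A^{(pn)} = (pn)^{-1} Q \bA P$ and the explicit computation of the overlap matrix $\tilde{\tau}$; once this is established, the positivity of $T$, the uniform bounds on the rows and entries of $S^{(pn)}$, and the determinantal identity all follow mechanically, and each of the three assumptions is checked by inspection.
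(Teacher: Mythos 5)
Your proof is correct, and it takes a genuinely different (more explicit) route than the paper's. You factor $A^{(pn)} = (pn)^{-1} Q \bA P$ with $Q = I_d\otimes\one_{mn}$, $P = I_m\otimes\one_{dn}^\T$, compute the $n$-independent overlap matrix $\tilde\tau$ from $PQ = n\tilde\tau^\T$, and invoke Sylvester's identity to reduce everything to the fixed positive $d\times d$ matrix $T = p^{-1}\bA\tilde\tau^\T$; this gives the exact, $n$-independent identities $\rho(A^{(pn)}) = \rho(T)>0$ and $\det(I_{pn}-\gamma S^{(pn)}) = \det(I_d - \gamma T/\rho(T))$, from which Assumptions~\ref{bnd-S} and~\ref{bnd-det} follow by inspection of a single polynomial in $\gamma$ with no roots of modulus less than one. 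The paper argues more softly and in a few lines: $\rho(A^{(pn)})\geq c>0$ because the minimal row sum of $A^{(pn)}$ stays away from zero \cite[Th.~8.1.22]{hor-joh-livre90}; Assumption~\ref{bnd-S} is checked directly with $K_n=n$; and Assumption~\ref{bnd-det} follows from $\rho(S^{(pn)})=1$ together with $\rank S^{(pn)}\leq\min(d,m)$, via the generic bound $\det(I-\gamma S^{(pn)})\geq(1-\gamma)^{r}$ for $\gamma\in[0,1)$. Your reduction costs the bookkeeping of $\tilde\tau$ (whose $n$-independence relies on the block dimensions being exact multiples of $n$, which holds here) but buys strictly more than the statement requires: the exact constant value of $\rho(A^{(pn)})$ and of the reverse characteristic polynomial of $S^{(pn)}$, which in particular makes the narrow convergence of $q_n$ mentioned in the closing remark of Section~\ref{sec-app} transparent, whereas the paper's low-rank argument is shorter and needs no explicit diagonalization data.
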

\begin{proof}
We check these assumptions with $K_n = n$.  Since all the elements of $\bA$ are
positive, the minimum row sum of $A^{(pn)}$ lies in a compact interval of
$\R_+$ away from zero. Thus, $\rho(A^{(pn)}) \geq c$ for some constant $c > 0$
\cite[Th.~8.1.22]{hor-joh-livre90}.  As a consequence, $S^{(pn)}$ exists and 
complies with Assumptions~\ref{bnd-S}. Furthermore, $\rho(S^{(pn)}) = 1$, and 
the rank $r$ of $S^{(pn)}$ is upper bounded by $\min(d,m)$. 
Assumption~\ref{bnd-det} follows from the inequality
$\det(I-\gamma S^{(pn)}) \geq (1-\gamma)^{r}$ for $\gamma\in[0,1)$. 
\end{proof}
We thus obtain from Theorem~\ref{th-main} that 
$\PP[\rho(X^{(pn)}) \geq 1 + \varepsilon] \to_n 0$ for each $\varepsilon > 0$. 
If we take out the normalization by $\rho(A^{(pn)})$ in the construction of 
$S^{(pn)}$, we of course obtain that 
$\PP[\rho(X^{(pn)}) \geq \sqrt{\rho(A^{(pn)})} + \varepsilon] \to_n 0$ for 
each $\varepsilon > 0$.

\subsection{Sampling a continuous variance profile}
\label{S-cont}

It is well known that any continuous function $\bS : [0,1]^2 \to \R_+$, seen as
an integral operator on the Banach space $C([0,1])$, is a compact trace-class
operator \cite{sim-livre05}. If its spectral radius $\rho(\bS)$ is positive, we
normalize our operator with $\rho(\bS)$ which amounts to assuming that
$\rho(\bS) = 1$. If $\rho(\bS) = 0$, then, we replace $\bS$ with $C_{\bS} \bS$
where $C_{\bS} > 0$ is an arbitrarily large constant. 

For $n > 0$, our variance profile matrix $S^{(n)}$ will be obtained by sampling
regularly the function $\bS$ on the rectangle $[0,1]^2$, namely, by setting
$s^{(n)}_{ij} = n^{-1} \bS(i/n, j/n)$. 

It is obvious that Assumption~\ref{bnd-S} is satisfied by $S^{(n)}$ with $K_n =
n$. The validity of Assumption~\ref{bnd-det} is the object of the following
proposition which proof follows from standard arguments. We include it in
Section~\ref{prf-cases} for completeness. 
\begin{prop} 
\label{TC} 
In the setting described above, it holds that 
\[
\lim_n \min_{\gamma\in[0,1-\varepsilon]} \det(I_n - \gamma S^{(n)}) 
= \lim_n \det(I_n - (1-\varepsilon) S^{(n)}) 
= \det(\bI - (1-\varepsilon) \bS) > 0 
\]
for each $\varepsilon\in (0,1]$, where $\bI$ is the identity operator on the 
Banach space $C([0, 1])$, and $\det\left( \bI - (1-\varepsilon) \bS \right)$ is 
a Fredholm determinant. 
\end{prop}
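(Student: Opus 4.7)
The plan is to expand both determinants into Plemelj--Smithies series in $\gamma$, to prove term-by-term convergence via Riemann sums using the continuity of $\bS$, and to use Hadamard's inequality to get a $k$-summable majorant allowing the swap of limit and sum. The minimum over $\gamma$ is handled by the monotonicity already recorded in the exponential formula~\eqref{dev-det}, and strict positivity of the Fredholm determinant follows from a spectral argument.

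For the convergence, I would start from the expansion
\[
\det(I_n - \gamma S^{(n)}) = \sum_{k=0}^n \frac{(-\gamma)^k}{k!}
\sum_{(i_1,\ldots,i_k)\in[n]^k_{\neq}}
\det\bigl[s^{(n)}_{i_p i_q}\bigr]_{p,q=1}^k ,
\]
where $[n]^k_{\neq}$ is the set of tuples of pairwise distinct indices. Substituting $s^{(n)}_{ij} = n^{-1}\bS(i/n,j/n)$ and pulling $n^{-k}$ out of the $k\times k$ determinant, the inner sum becomes $n^{-k}\sum D_k(i_1/n,\ldots,i_k/n)$ with $D_k(x) := \det[\bS(x_p,x_q)]_{p,q=1}^k$, a continuous function on $[0,1]^k$. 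Up to an $O(n^{-1})$ contribution from tuples with coinciding coordinates, this is a Riemann sum converging to $\int_{[0,1]^k} D_k\,dx$, which is exactly the $k$-th Plemelj--Smithies coefficient of the Fredholm determinant $\det(\bI - \gamma \bS)$.

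To interchange $\lim_n$ with $\sum_k$, I would invoke Hadamard's inequality: writing $M := \sup_{[0,1]^2}\bS$, each row of $[\bS(x_p,x_q)]_{p,q=1}^k$ has Euclidean norm at most $M\sqrt{k}$, so $|D_k|\leq M^k k^{k/2}$, giving the $n$-uniform and $k$-summable bound $\gamma^k M^k k^{k/2}/k!$ on the $k$-th term (summable by Stirling). Dominated convergence then yields $\det(I_n - \gamma S^{(n)}) \to \det(\bI - \gamma \bS)$, and specializing to $\gamma = 1-\varepsilon$ gives the second equality. For the first equality, $\tr(S^{(n)})^k \geq 0$ (a sum of non-negative cyclic products), so the exponential formula~\eqref{dev-det} shows $\gamma\mapsto \det(I_n - \gamma S^{(n)})$ is non-increasing on $[0,1)$, so the minimum over $[0,1-\varepsilon]$ is attained at the right endpoint.

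For strict positivity, I would use the Lidskii product representation $\det(\bI - (1-\varepsilon)\bS) = \prod_j (1-(1-\varepsilon)\lambda_j)$, valid because $\bS$ is trace class, where the $\lambda_j$ are the eigenvalues of $\bS$ counted with algebraic multiplicity; the product converges absolutely. Each $\lambda_j$ satisfies $|\lambda_j|\leq \rho(\bS)\leq 1$, so every factor is nonzero since $|(1-\varepsilon)\lambda_j|\leq 1-\varepsilon<1$. Real eigenvalues contribute positive real factors, while the reality of the kernel $\bS(x,y)$ forces non-real eigenvalues into complex-conjugate pairs, each pair contributing $|1-(1-\varepsilon)\lambda_j|^2>0$, so the product is strictly positive. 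I expect the main technical point to be securing the $k$-summable majorant: a naive bound on $D_k$ would produce a $k^k$ blow-up, so Hadamard's inequality is essential, mirroring the standard proof of convergence of the Plemelj--Smithies series for trace-class integral operators.
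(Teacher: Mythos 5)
Your overall strategy is the same as the paper's: expand $\det(I_n-\gamma S^{(n)})$ in its Fredholm/principal-minor series, show each coefficient converges to the corresponding Plemelj--Smithies coefficient of $\det(\bI-\gamma\bS)$ using the continuity of $\bS$ (the paper phrases your Riemann-sum step by viewing $S^{(n)}$ as a piecewise-constant kernel $\bS^{(n)}$), control the tails by Hadamard's inequality so that limit and sum can be interchanged, and get strict positivity of the Fredholm determinant from the eigenvalue product together with the trace-class summability $\sum_k|\lambda_k|<\infty$. The only substantive difference is cosmetic: you work at a fixed real $\gamma$ with dominated convergence, while the paper proves convergence of the polynomials uniformly on compacts of $\C$; your positivity argument via conjugate pairs is, if anything, slightly more explicit than the paper's.

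There is, however, one loose step: to identify $\min_{\gamma\in[0,1-\varepsilon]}\det(I_n-\gamma S^{(n)})$ with the endpoint value you invoke the exponential formula~\eqref{dev-det}, but that formula is only valid when the series $\sum_k\gamma^k\tr(S^{(n)})^k/k$ converges, i.e.\ when $\gamma\,\rho(S^{(n)})<1$; the bound $\limsup_n\rho(S^{(n)})\le 1$ is part of what Proposition~\ref{TC} is meant to deliver (it is equivalent to condition~\eqref{rho<1} inside Assumption~\ref{bnd-det}), so quoting it here is circular as written — if $\rho(S^{(n)})$ exceeded $1/(1-\varepsilon)$ the determinant would vanish and change sign inside $[0,1-\varepsilon]$ and the monotonicity claim would fail. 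The gap is easy to close within your own framework: your Hadamard majorant is uniform in $\gamma\in[0,1-\varepsilon]$, so your convergence is in fact uniform on that interval (indeed on compacts of $\C$), hence the minima converge to $\min_{\gamma\in[0,1-\varepsilon]}\det(\bI-\gamma\bS)$, and this minimum is attained at $\gamma=1-\varepsilon$ because $\rho(\bS)\le 1$ and $\tr\bS^k\ge 0$ make $\gamma\mapsto\det(\bI-\gamma\bS)=\exp(-\sum_k\gamma^k\tr\bS^k/k)$ non-increasing on $[0,1)$; alternatively, the locally uniform convergence plus Hurwitz's theorem and the non-vanishing of $\det(\bI-z\bS)$ on $D(0,1)$ yield $\limsup_n\rho(S^{(n)})\le 1$, after which your appeal to~\eqref{dev-det} is legitimate.
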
 
Thus, Assumption~\ref{bnd-det} holds true, and Theorem~\ref{th-main} follows.
Consequently, if we get back to our original operator $\bS$ (before the
multiplication by $\rho(\bS)^{-1}$ or by $C_{\bS}$), we obtain that
$\PP[\rho(X^{(n)}) \geq \sqrt{\rho(\bS)} + \varepsilon] \to_n 0$ for each 
$\varepsilon > 0$. In particular, $\rho(X^{(n)}) \toprobashort 0$ if 
$\rho(\bS) = 0$. 

\subsection{Random sparse sampling \`a la 
Erdős–Rényi of a continuous variance profile}
\label{erdos_reyni_subsection}
We now provide an example of a (semi-)sparse model covered by our result. In
this example, the sequence of matrices $(S^{(n)})$ will be random and
independent of the array $(W_{ij})_{ij\geq 1}$.  We shall show that
Assumptions~\ref{bnd-S} and~\ref{bnd-det} will be satisfied with high
probability (to be made precise below). In these conditions,
Theorem~\ref{th-main} will be obtained by conditioning on an appropriate event
which indicator is $S^{(n)}$--measurable. 

Let $\bS : [0,1]^2 \to [0,1]$ be a continuous function as in the previous
section. Assume for simplicity that the spectral radius $\rho(\bS)$ of $\bS$,
seen as an operator, is positive. Let us consider that $\rho(\bS) = 1$.  Our
variance profile matrix $S^{(n)}$ will be obtained by randomly sampling the
function $\bS$. Define the matrix 
\[
\uS^{(n)} = \begin{bmatrix} \uS^{(n)}_{ij} \end{bmatrix}_{i,j=1}^n 
 = \frac 1n \begin{bmatrix} \bS(i/n,j/n) 
  \end{bmatrix}_{i,j=1}^n.  
\]
Let $(K_n)$ be a sequence of positive numbers such that $K_n\to\infty$ and 
$K_n  = o(n)$.  For some large enough integer $n_0 > 0$, define the sequence 
of random matrices $(B^{(n)} = [B^{(n)}_{ij}] )_{n\geq n_0}$ as follows: 
\begin{align*}
    B^{(n)}_{ij} = \begin{cases}
        1 & \text{with probability } K_n \uS^{(n)}_{ij}  \\
        0 & \text{with probability } 1 - K_n \uS^{(n)}_{ij}, 
    \end{cases}
\end{align*}
and the random variables $\{ B^{(n)}_{ij} \}_{i,j\in[n]}$ are independent.  Let
\[
S^{(n)} = \frac{1}{K_n} B^{(n)}. 
\]
Trivially, $K_n \| S^{(n)} \|_\infty \leq 1$ for each elementary event, where
$\| \cdot \|_\infty$ is the max norm. Therefore,
Assumption~\ref{bnd-S}--\eqref{bnd-sij} is satisfied. Moreover, 
\begin{prop}
\label{ER} 
Assume that $K_n \geq \log n$. Then, there exists a constant $C_S  > 0$ such 
that $\limsup_n \vertiii{S^{(n)}} \leq C_S$ w.p.~1. 

Furthermore, for each $\varepsilon \in(0,1]$, it holds that 
\begin{equation} 
\label{q->S} 
 \min_{\gamma \in [0, 1 - \varepsilon]} 
  \det\left( I_n - \gamma S^{(n)} \right) 
   \xrightarrow[n\to\infty]{{\mathcal P}} 
  \det\left( \bI - (1-\varepsilon) \bS \right)  > 0,  
\end{equation} 
where $\xrightarrow{{\mathcal P}}$ is the convergence in probability. 
\end{prop}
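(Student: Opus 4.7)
The plan is to verify the two claims separately, relying on Chernoff concentration for the row-sum bound and on a first-moment identity combined with variance control for the determinant statement. For the row-sum bound, the $i$-th row $\ell^1$-norm of $S^{(n)}$ equals $N_i/K_n$, where $N_i = \sum_j B^{(n)}_{ij}$ is a sum of $n$ independent Bernoullis with means bounded by $K_n/n$ and total mean $\E N_i \le \|\bS\|_\infty K_n$. A Chernoff bound gives $\PP(N_i > C_S K_n) \le \exp(-c(C_S) K_n)$ with $c(C_S) \to \infty$ as $C_S \to \infty$. Choosing $C_S$ with $c(C_S) > 2$ and using $K_n \ge \log n$, a union bound over the $n$ rows yields $\PP(\vertiii{S^{(n)}} > C_S) \le n^{1-c(C_S)}$, which is summable in $n$; Borel--Cantelli then gives $\limsup_n \vertiii{S^{(n)}} \le C_S$ almost surely.

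For the determinant statement, the first step is to establish the clean expectation identity
\[
\E \det(I_n - \gamma S^{(n)}) = \det(I_n - \gamma \uS^{(n)}) .
\]
This comes from the principal-minor expansion $\det(I_n - \gamma S^{(n)}) = \sum_{k=0}^n (-\gamma)^k \sum_{|I|=k} \det S^{(n)}_{II}$: inside each minor, the Leibniz expansion is a sum of products $\prod_a S^{(n)}_{i_a, i_{\sigma(a)}}$ in which the row indices $i_a$ are pairwise distinct, forcing the $k$ ordered pairs $(i_a, i_{\sigma(a)})$ to be distinct as well, so the corresponding Bernoullis are independent and the expectation factors, replacing each $S^{(n)}$ entry by its mean $\uS^{(n)}$. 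Proposition~\ref{TC} applied to $\uS^{(n)}$ then yields $\E \det(I_n - \gamma S^{(n)}) \to \det(\bI - \gamma \bS)$ for each $\gamma \in [0, 1-\varepsilon]$. To upgrade this to convergence in probability, I would estimate the variance of $e_k(S^{(n)}) := \sum_{|I|=k} \det S^{(n)}_{II}$ via $\cov(\det S^{(n)}_{II}, \det S^{(n)}_{I'I'})$: this covariance vanishes when $I \cap I' = \emptyset$ by independence of disjoint Bernoulli blocks, and a combinatorial accounting of the overlap $|I \cap I'|$ together with the sparsity bounds $S^{(n)}_{ij} \le 1/K_n$ and $\vertiii{S^{(n)}} \le C_S$ should deliver $\var e_k(S^{(n)}) \to 0$ for each fixed $k$.

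Uniformity in $\gamma \in [0, 1-\varepsilon]$ then follows by combining pointwise convergence in probability with the tightness of the polynomial $\gamma \mapsto \det(I_n - \gamma S^{(n)})$: on the almost-sure event $\vertiii{S^{(n)}} \le C_S$ its coefficients are bounded uniformly in $n$ on compacts, and the limit $\gamma \mapsto \det(\bI - \gamma \bS)$ is continuous, strictly positive on $[0, 1-\varepsilon]$, and strictly decreasing there since $\tr \bS^k = \int \bS(x_1,x_2)\cdots\bS(x_k,x_1)\,dx \ge 0$ and $\rho(\bS)=1$. Consequently the infimum is attained at the right endpoint $\gamma = 1-\varepsilon$ and equals $\det(\bI - (1-\varepsilon)\bS) > 0$, giving the claimed convergence. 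The main obstacle is the tail control of the Fredholm expansion in the sparse regime $K_n = o(n)$: the number of $k$-subsets $\binom{n}{k}$ grows much faster than the naive Hadamard bound $(C_S/K_n)^{k/2}$ on each minor can absorb, so handling the large-$k$ terms uniformly in $n$ requires a sharper estimate exploiting the sparse support of $B^{(n)}$---for instance, a cycle-counting argument showing that most principal minors vanish and that the surviving ones admit an improved bound from their graph structure.
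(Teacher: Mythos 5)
Your first claim (the row-sum bound via Chernoff, union bound, and Borel--Cantelli) is exactly the paper's argument and is fine. Your expectation identity $\E \det(I_n - \gamma S^{(n)}) = \det(I_n - \gamma \uS^{(n)})$ is also correct and is the same first-moment mechanism the paper exploits (distinct row indices in each permutation product force independence), and your variance sketch for the elementary symmetric functions is in the same spirit as the paper's variance computation for the traces $\tr (S^{(n)})^k$.

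The genuine gap is the one you yourself flag at the end: everything you have is per-coefficient (fixed $k$), and to pass from that to convergence of $\min_{\gamma\in[0,1-\varepsilon]}\det(I_n-\gamma S^{(n)})$ you need control of the whole polynomial, uniformly in $\gamma$ and $n$ --- i.e.\ a tail bound on the large-$k$ terms. Your proposed route (bound the coefficients on the event $\vertiii{S^{(n)}}\le C_S$ via Hadamard-type estimates) does indeed fail in the sparse regime, and "a sharper cycle-counting estimate" is left as an unproved hope, so the proof is incomplete precisely at its hardest point. The paper closes this gap with a permanent bound: by the triangle inequality and the same independence-within-a-permutation observation, $\E\bigl|\det(I_n - z S^{(n)})\bigr| \le \perm\bigl(I_n + |z|\,\uS^{(n)}\bigr)$, and Lemma~\ref{perm} (Kershaw's Fredholm permanent theory, i.e.\ the identity $\bd^{(n)}(w)\bp^{(n)}(w)=1$ together with the uniform lower bound on $\det(I_n - \gamma\,\uS^{(n)})$ coming from Proposition~\ref{TC}) shows this permanent is bounded on compacts of $D(0,1)$ uniformly in $n$. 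This gives tightness of $q_n^S(z)=\det(I_n - zS^{(n)})$ as $\HH$-valued random variables (Proposition~\ref{cond-tight}); tightness plus the fixed-$k$ convergences (the paper does $\tr (S^{(n)})^k \to \tr\bS^k$ in probability by walk counting, which is interchangeable with your coefficient convergence) then yields convergence in probability in $\HH$, hence locally uniform convergence, and the minimum over $[0,1-\varepsilon]$ converges to $\det(\bI-(1-\varepsilon)\bS)$ by continuity of the min functional and monotonicity of the limit. So to complete your argument you should replace the Hadamard tail estimate by this expectation--permanent bound (or some equivalent uniform-in-$n$ control of the full series); without it the step from pointwise-in-$\gamma$ to the minimum does not go through.
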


\begin{cor}\label{confinement_cor_inhomogenous_erdos_reyni}
\label{ERnovap}
Assume that $K_n\geq\log n$. Then, 
$\PP\left[ \rho(X^{(n)}) \geq 1 + \varepsilon \right] \to_n 0$ for each 
$\varepsilon > 0$. 
\end{cor}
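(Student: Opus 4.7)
The plan is to derive the corollary from Theorem~\ref{th-main} by conditioning on the variance profile. Since the Bernoulli array $(B^{(n)})$, and hence the full sequence $(S^{(m)})_{m\geq 1}$, is independent of the i.i.d.\ array $(W_{ij})$, the conditional distribution of $X^{(n)}$ given $\sigma((S^{(m)})_{m\geq 1})$ is that of a random matrix with \emph{deterministic} variance profile equal to the realization $S^{(n)}(\omega)$. Writing
\[
\PP\!\left[\rho(X^{(n)}) \geq 1+\varepsilon\right]
= \E\!\left[\PP\!\left[\rho(X^{(n)}) \geq 1+\varepsilon \,\middle|\, (S^{(m)})_{m\geq 1}\right]\right],
\]
it suffices, by bounded convergence, to show that this conditional probability tends to $0$ in probability.

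To establish the latter, I argue by contradiction: pick $\varepsilon > 0$ and a subsequence $(n_k)$ along which $\PP[\rho(X^{(n_k)}) \geq 1+\varepsilon] \geq \delta > 0$. Proposition~\ref{ER} supplies two ingredients: first, $\limsup_n \vertiii{S^{(n)}} \leq C_S$ almost surely; second, for every rational $\eta \in (0,1]$, $\det(I_n - (1-\eta) S^{(n)}) \to \det(\bI - (1-\eta)\bS) > 0$ in probability. A diagonal extraction over the countable set of rationals $\eta$, together with the fact that convergence in probability yields almost-sure convergence along a further subsequence, produces a sub-subsequence $(n_{k_\ell})$ along which all these convergences hold simultaneously almost surely.

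I then freeze a realization $\omega$ in the corresponding full-measure event and verify that the deterministic sequence $\bigl(S^{(n_{k_\ell})}(\omega)\bigr)_\ell$ satisfies the hypotheses of Theorem~\ref{th-main}. Assumption~\ref{bnd-S}\eqref{bnd-sij} holds by construction with $C'_S = 1$. Assumption~\ref{bnd-S}\eqref{rowsum} follows from the $\limsup$ bound, enlarging the constant to absorb finitely many initial terms. For Assumption~\ref{bnd-det}, given $\varepsilon \in (0,1]$ I pick a rational $\eta \in (0,\varepsilon]$ and use the monotonicity of $\gamma \mapsto \det(I_n - \gamma S^{(n)})$ recorded in~\eqref{dev-det}: $\det(I_n - (1-\varepsilon) S^{(n)}) \geq \det(I_n - (1-\eta) S^{(n)})$, whose liminf equals $\det(\bI - (1-\eta)\bS) > 0$. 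Theorem~\ref{th-main} applied to this deterministic sub-subsequence then yields $\PP[\rho(X^{(n_{k_\ell})}) \geq 1+\varepsilon \mid (S^{(m)})](\omega) \to 0$ almost surely, and bounded convergence gives $\PP[\rho(X^{(n_{k_\ell})}) \geq 1+\varepsilon] \to 0$, contradicting the lower bound $\delta$.

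The only nontrivial obstacle is the passage from the per-$\eta$ in-probability statement of Proposition~\ref{ER} to an almost-sure deterministic sequence on which Assumption~\ref{bnd-det} holds \emph{uniformly} over all $\varepsilon \in (0,1]$; this is handled by combining the diagonalization over rational $\eta$ with the monotonicity of $\gamma \mapsto \det(I - \gamma S)$, and no further probabilistic input beyond Proposition~\ref{ER} is required.
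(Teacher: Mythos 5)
Your proof is correct, but it takes a genuinely different route from the paper's. The paper does not apply Theorem~\ref{th-main} as a black box: it fixes $\varepsilon$, picks $\beta>0$ with $\det(\bI-(1-\varepsilon/2)\bS)\geq\beta$, and conditions on the single $S^{(n)}$--measurable event $\cE_n=\bigl[\vertiii{S^{(n)}}\leq 2C_S \text{ and } \min_{\gamma\in[0,1-\varepsilon/2]}\det(I_n-\gamma S^{(n)})\geq\beta/2\bigr]$, writing $\PP[\rho(X^{(n)})\geq 1+\varepsilon]\leq\PP[\,\cdot\mid\cE_n]+\PP[\cE_n^{\mathrm c}]$; Proposition~\ref{ER} gives $\PP[\cE_n^{\mathrm c}]\to 0$, and the conditional term is handled by rerunning a slightly modified version of the proofs of Theorems~\ref{th-chpol} and~\ref{th-main} under this conditioning, which works because all estimates there depend only on the uniform constants built into $\cE_n$. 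You instead condition on the whole $\sigma$-field generated by $(S^{(m)})$, and combine the subsequence criterion for convergence in probability with a diagonal extraction over rational $\eta$ to freeze, almost surely along a sub-subsequence, a deterministic realization satisfying Assumptions~\ref{bnd-S} and~\ref{bnd-det}, so that Theorem~\ref{th-main} applies verbatim pathwise; bounded convergence then produces the contradiction. Your route buys the convenience of never reopening the proof of Theorem~\ref{th-chpol} (the theorem is used strictly as stated, for deterministic profiles), at the cost of an indirect, non-quantitative subsequence/contradiction structure; the paper's event-conditioning is more direct and keeps explicit constants, but requires the (easy) remark that the proof goes through uniformly over all profiles meeting the constraints defining $\cE_n$. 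Two small imprecisions in your write-up, neither of which affects validity: Proposition~\ref{ER} controls $\min_{\gamma\in[0,1-\eta]}\det(I_n-\gamma S^{(n)})$, not merely the value at $\gamma=1-\eta$, and to verify Assumption~\ref{bnd-det} you should simply use that this minimum lower-bounds $\min_{\gamma\in[0,1-\varepsilon]}\det(I_n-\gamma S^{(n)})$ whenever $\eta\leq\varepsilon$ (interval inclusion); invoking the monotonicity coming from~\eqref{dev-det} is unnecessary and slightly circular, since pathwise you do not yet know $\rho(S^{(n)}(\omega))\leq 1$ at the point where you invoke it.
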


\begin{rem} 
If $W_{1,1}$ is a Rademacher random variable, then the matrix $X^{(n)}$ 
can be considered the centered adjacency matrix of a directed
inhomogeneous Erdős–Rényi graph. In recent years, there has been tremendous
attention on the spectrum of undirected inhomogeneous Erdős–Rényi models (see,
for example, \cite{avena2023limiting}, \cite{benaych2019largest},
\cite{chakrabarty2020eigenvalues}, and \cite{chakrabarty2021spectra}). A similar result to Corollary \ref{confinement_cor_inhomogenous_erdos_reyni}, is proven in Theorem 3.4 of \cite{benaych2020spectral}. In the case where $K_n=n^{o(1)}$, an analogue of Corollary \ref{confinement_cor_inhomogenous_erdos_reyni} is proven in Theorem 3 of \cite{coste2021simpler}, see also Remark 4. 
\end{rem}

\begin{rem}
The condition $K_n \geq \log n$ in the statement of Proposition~\ref{ER} is
required to obtain that $\limsup_n \vertiii{S^{(n)}} \leq C_S$ w.p.1., which
leads to Corollary~\ref{ERnovap} by the conditioning on the event $\cE_n$ that
we make in Section~\ref{prf-cor}.  We believe that this condition is not
necessary to obtain Theorem~\ref{th-main}.  Indeed, it is possible to obtain a
an analogue of Theorem~\ref{th-chpol} by including all the randomness of our
model within the matrix $X^{(n)}$ (without conditioning), and by simply taking
$\uS^{(n)}$ as the variance profile matrix.  We shall not develop this issue
here.  
\end{rem}

Let us provide a simple example where the condition $K_n \geq \log n$ is 
avoided while still using our Theorem~\ref{th-chpol} to obtain the spectral
radius confinement stated by Theorem~\ref{th-main}. 

\subsection{Random sparse sampling  
of a continuous variance profile with a fixed outer degree}
\label{d-reg-out} 
We still consider a operator represented by a continuous function $\bS :
[0,1]^2 \to [0,1]$ such that $\rho(\bS) = 1$.  Our variance profile matrix
$S^{(n)}$ is now obtained by randomly sampling the function $\bS$ as follows.
Let $(K_n)$ be a sequence of positive integers such that $K_n\to\infty$ and 
$K_n = o(n)$.  Let $\cI^{(n)}$ a random sub-set of $[n]$ which is uniformly
distributed among the $\binom{n}{K_n}$ sub-sets of $[n]$ with cardinality
$K_n$.  Let $\cI^{(n)}_1, \ldots, \cI^{(n)}_n$ be i.i.d.~subsets of $[n]$ such
that $\cI^{(n)}_1$ is equal to $\cI^{(n)}$ in distribution. Define the 
$\{ 0,1 \}^{n\times n}$--valued random matrix $R^{(n)} = [ R^{(n)}_{ij} ]$ as 
\[
R^{(n)}_{ij} = \begin{cases}
      1 & \text{if } j \in \cI^{(n)}_i \\ 
      0 & \text{if }  j \not\in \cI^{(n)}_i \\ 
 \end{cases}
\]
Finaly, let $S^{(n)} = [ S_{ij}^{(n)} ]$ be defined as 
\[
S_{ij}^{(n)} = \frac{n}{K_n} \uS^{(n)}_{ij} R^{(n)}_{ij} . 
\]
Trivially, $K_n \| S^{(n)} \|_\infty = \| \bS \|_\infty$ and 
$\vertiii{S^{(n)}} \leq \| \bS \|_\infty$, where $\| \bS \|_\infty$ is the 
norm of $\bS$ on $C([0,1])$. Regarding Assumption~\ref{bnd-det}, we have: 
\begin{prop}
\label{Reg}
For $\varepsilon \in (0,1]$, it holds that 
\[
 \min_{\gamma \in [0, 1 - \varepsilon]} 
  \det\left( I_n - \gamma S^{(n)} \right) 
   \xrightarrow[n\to\infty]{{\mathcal P}} 
  \det\left( \bI - (1-\varepsilon) \bS \right)  > 0. 
\]
\end{prop}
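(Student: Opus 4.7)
The plan is to mirror the proof of Proposition~\ref{TC}, replacing the deterministic traces $\tr((\uS^{(n)})^k)$ by the random traces $\tr((S^{(n)})^k)$ and controlling the fluctuations by a moment method. Two ingredients are needed: (a)~termwise convergence $\tr((S^{(n)})^k)\xrightarrow{\mathcal P}\tr(\bS^k)$ for each fixed $k\ge 1$; and (b)~a uniform-in-$n$ bound on the tail $\sum_{k>K_0}(1-\varepsilon)^k\tr((S^{(n)})^k)/k$ on a high-probability event. Granted these, together with the non-negativity of the traces (which makes $\gamma\mapsto\det(I_n-\gamma S^{(n)})$ non-increasing on the interval where $\gamma\rho(S^{(n)})<1$), the minimum of $\det(I_n-\gamma S^{(n)})$ over $\gamma\in[0,1-\varepsilon]$ is attained at $\gamma=1-\varepsilon$ with probability tending to one (using the spectral bound that will come out of step (b)), and the identity~\eqref{dev-det} then yields
\[
\log\det(I_n-(1-\varepsilon)S^{(n)})=-\sum_{k\ge 1}\frac{(1-\varepsilon)^k}{k}\tr((S^{(n)})^k) \;\xrightarrow{\mathcal P}\; -\log\det\bigl(\bI-(1-\varepsilon)\bS\bigr),
\]
with the right-hand limit being the Fredholm determinant identified in Proposition~\ref{TC}.

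For (a), I would expand $\tr((S^{(n)})^k)=\sum_{\bs i}\prod_\ell S^{(n)}_{i_\ell,i_{\ell+1}}$ (cycle notation, $i_{k+1}=i_1$) and split cycles according to the number $r$ of distinct row indices. Using $S^{(n)}_{ij}=K_n^{-1}\bS(i/n,j/n)R^{(n)}_{ij}$ with $\E R^{(n)}_{ij}=K_n/n$ and the independence of the rows of $R^{(n)}$, cycles with $r=k$ satisfy $\E\prod_\ell S^{(n)}_{i_\ell,i_{\ell+1}}=\prod_\ell\uS^{(n)}_{i_\ell,i_{\ell+1}}$, and summing over them yields $\tr((\uS^{(n)})^k)+o(1)\to\tr(\bS^k)$ by the Riemann-sum computation of Proposition~\ref{TC}. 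The $r<k$ cycles number $O(n^{k-1})$; the row-wise hypergeometric bound $\E\prod_\ell R^{(n)}_{i,\cdot}\le(K_n/n)^k$ together with $S^{(n)}_{ij}\le\|\bS\|_\infty/K_n$ makes each contribute at most $O(\|\bS\|_\infty^k/n^k)$, so their total is $O(\|\bS\|_\infty^k/n)=o(1)$. For the variance, covariances between two cycles whose row-index sets are disjoint vanish by row independence of $R^{(n)}$; the $O(n^{2k-1})$ pairs that share at least one row contribute $O(\|\bS\|_\infty^{2k}/n^{2k})$ each (same moment bounds), giving $\var(\tr((S^{(n)})^k))=O(\|\bS\|_\infty^{2k}/n)$, and Chebyshev yields convergence in probability.

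For (b), I would first turn (a) at level $k=2m$ into a probabilistic spectral bound via the Perron--Frobenius inequality $\rho(S^{(n)})^{2m}\le\tr((S^{(n)})^{2m})$: since $\bS$ is trace-class (as in Section~\ref{S-cont}), Gelfand's formula gives $\tr(\bS^{2m})^{1/(2m)}\to\rho(\bS)=1$ as $m\to\infty$, and combining with (a) yields $\rho(S^{(n)})\le 1+\eta$ with probability tending to one for any prescribed $\eta>0$. Using the deterministic safety net $\rho(S^{(n)})\le\vertiii{S^{(n)}}\le\|\bS\|_\infty$ on the complementary event, together with a dimension-free spectral inequality of the form $\tr((S^{(n)})^k)\le C\,\rho(S^{(n)})^{k-2m}\tr((S^{(n)})^{2m})$ for $k\ge 2m$ (obtained from $|\lambda_i|^k\le\rho(S^{(n)})^{k-2m}|\lambda_i|^{2m}$ at the level of eigenvalues), the tail $\sum_{k\ge K_0}(1-\varepsilon)^k\tr((S^{(n)})^k)/k$ becomes geometrically small in $K_0$, uniformly in $n$ on the high-probability event. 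The main obstacle I foresee is precisely this final tail bound: the naive inequality $\tr(A^k)\le n\rho(A)^k$ is too lossy because of the dimension factor, and the clean dimension-free bound is immediate for Hermitian $A$ but needs extra care for the non-symmetric $S^{(n)}$; it should follow from combining the trace-class structure of $\bS$ (so that its eigenvalues, and by (a) also the ``relevant'' eigenvalues of $S^{(n)}$, have controllable tails) with the Perron--Frobenius property of non-negative matrices. Once this tail is handled, the monotonicity in $\gamma$ of both the random and the limit determinants delivers the convergence of the minimum over $[0,1-\varepsilon]$, and the strict positivity of the limit is given by Proposition~\ref{TC}.
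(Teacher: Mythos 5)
Your step (a) is essentially sound and close in spirit to the paper: the paper proves the termwise convergence $\tr((S^{(n)})^k)\toprobashort\tr\bS^k$ by comparing $R^{(n)}$ with an i.i.d.\ Bernoulli matrix through the hypergeometric/binomial inequality~\eqref{R-B}, whereas you use the hypergeometric bound directly, which is fine. One bookkeeping slip: when a closed walk repeats an edge, the expectation of the product of the $R^{(n)}$'s carries one factor $K_n/n$ per \emph{distinct} edge, not per traversed edge, so the error terms (and the variance) are of order $1/K_n$ rather than $1/n$; since $K_n\to\infty$ the conclusion $o(1)$ survives, but your stated rates are not correct as written.

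The genuine gap is step (b). The inequality $\rho(S^{(n)})^{2m}\le\tr((S^{(n)})^{2m})$ is false for non-symmetric non-negative matrices: for the permutation matrix $A$ of a directed $3$-cycle one has $\rho(A)=1$ while $\tr(A^{2})=0$. Likewise, the ``dimension-free'' bound $\tr(A^{k})\le C\,\rho(A)^{k-2m}\tr(A^{2m})$ cannot be obtained at the level of eigenvalues, because $\tr(A^{2m})=\sum_i\lambda_i^{2m}$ may be much smaller than $\sum_i|\lambda_i|^{2m}$ for a non-normal matrix. Without a high-probability bound $\rho(S^{(n)})<1/(1-\varepsilon)$ the expansion~\eqref{dev-det} is not even valid on $[0,1-\varepsilon]$, and the deterministic safety net $\rho(S^{(n)})\le\vertiii{S^{(n)}}\le\|\bS\|_\infty$ is useless when $(1-\varepsilon)\|\bS\|_\infty\ge 1$; so the passage from finitely many trace limits to the determinant collapses exactly at the point you flagged, and the fix you suggest does not supply it. The paper circumvents tail and spectral-radius control altogether: it views $q_n^S(z)=\det(I_n-zS^{(n)})$ as a random element of $\HH$, obtains tightness from the permanent bound $\E|q_n^S(z)|\le\perm(I_n+|z|\,\uS^{(n)})$ (uniformly bounded on compacts of $D(0,1)$ by the argument of Lemma~\ref{perm}, which only uses $\min_{\gamma\in[0,1-\varepsilon]}\det(\bI-\gamma\bS)>0$), and then Propositions~\ref{cond-tight}--\ref{propcvg} show that your step (a) — which controls all coefficients of $q_n^S$ through finitely many traces — already yields $q_n^S\toprobashort\det(\bI-z\bS)$ in $\HH$; uniform convergence on the compact $[0,1-\varepsilon]$ and the monotonicity of the limiting determinant in $\gamma$ then give the proposition. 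To repair your argument, replace (b) by such a tightness/permanent control of the determinant's coefficients rather than by a spectral-radius bound.
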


We close this section with a final remark. 
\begin{rem}
One can show that in the last three application examples,
Theorem~\ref{th-chpol} can be reformulated by stating that the sequence $(q_n)$
converges in distribution in the space $\HH$. We state without further comment
the expression of the limit in distribution $\bq\in\HH$, which is the same in
the three cases.  This limit reads: 
\[
\bq(z) = \sqrt{\det(\bI- z^2\E W^2_{1,1} \bS)} 
 \exp\left(- \sum_{k=1}^{\infty} z^k Z_k \sqrt{\frac{\tr \bS^k}{k}}\right),
 \quad z \in D(0,1), 
\]
where 
\begin{equation}
\label{trop} 
\tr \bS^k = \int_{[0,1]^k} \bS(x_1,x_2) \bS(x_2,x_3) \ldots \bS(x_k,x_1) 
   \prod_{i=1}^k dx_i.
\end{equation} 
\end{rem}

\section{Proof of Theorems~\ref{th-main} and~\ref{th-chpol}}
\label{sec-prfmain} 

In all the remainder, $C > 0$ is a generic constant independent of $n$ that can
change from a display to another. In the proofs, the superscript $^{(n)}$ such
as in $X^{(n)}$ will be often removed for notational simplicity.  Given a
matrix $M \in \C^{n\times n}$ and a set $I \subset [n]$ with cardinality $|I|$,
we denote as $M_I$ the $\C^{|I|\times |I|}$ sub-matrix of $M$ consisting of the
rows and columns which indices belong to $I$. Given a function $f : [0,1]^2 \to
\R$, we denote as $f\begin{pmatrix} x_1 & x_2 & \cdots & x_k \\ x_1 & x_2 &
\cdots & x_k \end{pmatrix}$ the $k\times k$ matrix which element $(i,j)$ is
$f(x_i, x_j)$. 

We start with the proof of Theorem~\ref{th-chpol}. 

\subsection{Proof of Theorem~\ref{th-chpol}: preliminary results on 
random holomorphic functions.} 

Before entering the proof of Theorem~\ref{th-chpol}, it will be useful to
recall first some basic results on the convergence in distribution of
random holomorphic functions. The reader is referred to \emph{e.g.}
\cite{shi-12} (see also \cite{bordenave2021convergence}) for more details on
this subject. In the three following propositions, we let $\epsilon \geq 0$ 
and we consider the space $\HH_\epsilon$ of holomorphic functions on the 
open disk $D(0,1-\epsilon)$. 
\begin{prop}
\label{cond-tight} 
Let $(f_n)$ be a sequence of random elements valued in $\HH_\epsilon$.  If, for
each compact set $\cK \subset D(0,1-\epsilon)$, the sequence of random
variables $(\max_{z\in\cK} | f_n(z) |)_n$ is tight, then, $(f_n)$ is tight.
For this condition to hold, it is enough that $\E |f_n(z) |^p \leq g(z)$  for
$p\geq 1$, where $g(z)$ is bounded on the compacts of $D(0,1-\epsilon)$. 
\end{prop}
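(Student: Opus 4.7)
The plan is to combine Prokhorov's theorem with Montel's theorem, exploiting the fact that $\HH_\epsilon$ equipped with the topology of uniform convergence on compacts is Polish. Montel's theorem characterizes the relatively compact subsets of $\HH_\epsilon$ as those subsets that are uniformly bounded on every compact subset of $D(0,1-\epsilon)$. Consequently, tightness of $(f_n)$ amounts to producing, for each $\eta>0$, a closed set $K_\eta \subset \HH_\epsilon$ which is uniformly bounded on every compact of $D(0,1-\epsilon)$ and satisfies $\PP[f_n\in K_\eta]\geq 1-\eta$ for all $n$.

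To build such a $K_\eta$, I would exhaust $D(0,1-\epsilon)$ by the increasing sequence of compact disks $\cK_m=\overline{D(0,1-\epsilon-1/m)}$ for $m$ large. The hypothesis gives, for each $m$, a constant $M_m>0$ such that $\PP[\max_{z\in\cK_m}|f_n(z)|>M_m]\leq \eta\, 2^{-m}$ for every $n$. Setting
\[
K_\eta=\bigcap_{m}\bigl\{f\in\HH_\epsilon \colon \max_{z\in\cK_m}|f(z)|\leq M_m\bigr\},
\]
the set $K_\eta$ is closed (each slice is closed by continuity of the evaluation maps) and uniformly bounded on each $\cK_m$, hence on every compact of $D(0,1-\epsilon)$; Montel's theorem gives compactness. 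A union bound yields $\PP[f_n\notin K_\eta]\leq \eta$, which is the Prokhorov criterion for tightness.

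For the sufficient condition, the task is to upgrade the pointwise estimate $\E|f_n(z)|^p\leq g(z)$ to a uniform-on-compacts estimate. Given a compact $\cK\subset D(0,1-\epsilon)$, I would choose a slightly larger compact $\cK'\subset D(0,1-\epsilon)$ together with some $r>0$ so that $\overline{D(z,r)}\subset \cK'$ for every $z\in\cK$. Since $f_n$ is holomorphic and $p\geq 1$, the function $|f_n|^p$ is subharmonic, and the sub-mean-value inequality gives
\[
|f_n(z)|^p\leq \frac{1}{\pi r^2}\int_{D(z,r)}|f_n(w)|^p\, dA(w)\leq \frac{1}{\pi r^2}\int_{\cK'}|f_n(w)|^p\, dA(w),\qquad z\in\cK.
\]
Taking the maximum over $z\in\cK$ on the left and then expectations (using Fubini and the pointwise moment bound), I would obtain
\[
\E\max_{z\in\cK}|f_n(z)|^p\leq \frac{1}{\pi r^2}\int_{\cK'}g(w)\, dA(w)<\infty,
\]
and Markov's inequality delivers tightness of $(\max_{z\in\cK}|f_n(z)|)_n$, reducing to the first part. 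The only non-routine step is this sub-mean-value trick that promotes pointwise moment control to sup-control on compacts; the rest is a mechanical application of Prokhorov and Montel.
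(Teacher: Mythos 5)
Your proof is correct, and it is essentially the standard argument: the paper itself states Proposition~\ref{cond-tight} without proof, referring to \cite{shi-12} and \cite{bordenave2021convergence}, where exactly this combination of Montel's characterization of relatively compact subsets of $\HH_\epsilon$ (via an exhaustion by compacts and a union bound) and the subharmonicity of $|f_n|^p$ to upgrade the pointwise moment bound to a sup-on-compacts bound is used. The only cosmetic remark is that Prokhorov's theorem is not actually needed for the first part — exhibiting the compact sets $K_\eta$ is already the definition of tightness — but this does not affect the validity of the argument.
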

\begin{prop}
Let $(f_n)$ be a tight sequence of random elements valued in $\HH_\epsilon$.
Denote as $f_n(z) = \sum_{k=0}^\infty a_k^{(n)} z^k$ the power series
representation of $f_n$ in $D(0,1-\epsilon)$. Assume that there exists a
sequence $a_0, a_1, \ldots$ of random variables such that for each positive
integer $m$, the $m$--tuple $(a_0^{(n)}, \ldots, a_m^{(n)})$ converges in
distribution to $(a_0, \ldots, a_m)$ as $n\to\infty$. Then, the function $f(z)
= \sum_{k=0}^\infty a_k z^k$ is well-defined as a random element valued in
$\HH_\epsilon$, and $(f_n)$ converges in distribution to $f$. 
\end{prop}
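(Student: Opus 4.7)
The plan is to combine Prokhorov's theorem with the continuity of the coefficient extraction maps on $\HH_\epsilon$. Since $\HH_\epsilon$ is a Polish space and $(f_n)$ is assumed tight, relative compactness of the laws holds, so every subsequence of $(f_n)$ admits a further subsequence $(f_{n_k})$ converging in distribution to some $\HH_\epsilon$-valued random element $g$. My goal will be to identify the law of any such $g$ as that of the claimed $f$, and then upgrade the subsubsequence convergence to full convergence.

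The key observation is that for each $k\geq 0$, the coefficient functional $\pi_k : h \mapsto h^{(k)}(0)/k!$ is continuous from $\HH_\epsilon$ into $\C$, for instance via Cauchy's formula along a fixed small circle inside $D(0,1-\epsilon)$, because uniform convergence on that circle controls the contour integral. Hence, for any fixed $m$, the vector-valued map $h \mapsto (\pi_0(h),\ldots,\pi_m(h))$ is continuous from $\HH_\epsilon$ into $\C^{m+1}$, and the continuous mapping theorem applied to $f_{n_k}\Rightarrow g$ yields $(a_0^{(n_k)},\ldots,a_m^{(n_k)}) \Rightarrow (\pi_0(g),\ldots,\pi_m(g))$. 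Combined with the hypothesis that $(a_0^{(n)},\ldots,a_m^{(n)}) \Rightarrow (a_0,\ldots,a_m)$ for every $m$, this forces the joint law of $(\pi_k(g))_{k\geq 0}$ to coincide with that of $(a_k)_{k\geq 0}$ on finite-dimensional marginals, hence on the product $\sigma$-algebra. In particular, almost surely the series $\sum_{k\geq 0} a_k z^k$ converges on $D(0,1-\epsilon)$ and defines a random element of $\HH_\epsilon$ having the same law as $g$; this proves that $f$ is well-defined with $f \overset{d}{=} g$.

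To obtain full convergence $f_n \Rightarrow f$, I would invoke the standard subsubsequence principle: if $(f_n)$ did not converge in distribution to $f$, then some bounded continuous functional on $\HH_\epsilon$ would witness the failure along a subsequence, from which tightness extracts a further subsequence with some weak limit $g'$; but the argument above shows $g' \overset{d}{=} f$, yielding a contradiction. The main subtle point of the whole argument is the a.s.\ well-definedness of $f$ as an element of $\HH_\epsilon$: joint convergence of finitely many coefficients alone does not a priori prevent the limiting series $\sum a_k z^k$ from having radius of convergence strictly smaller than $1-\epsilon$. The tightness assumption on $(f_n)$ is exactly what bypasses this difficulty, by supplying the $\HH_\epsilon$-valued subsequential limit $g$ that realizes $(a_k)$ as a genuine Taylor sequence, making a separate decay estimate on $|a_k|$ unnecessary.
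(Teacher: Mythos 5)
Your argument is correct: Prokhorov's theorem on the Polish space $\HH_\epsilon$, continuity of the coefficient functionals via Cauchy's formula, identification of the law of any subsequential limit through its finite-dimensional coefficient distributions (which also transfers the a.s.\ Taylor-coefficient property to $(a_k)$ and makes $f$ well-defined in $\HH_\epsilon$), and the subsubsequence principle. The paper states this proposition without proof, deferring to the cited literature (Shirai; Bordenave, Chafa\"i and Garc\'{\i}a-Zelada), and your proof is exactly the standard argument those references rely on, so there is nothing to add.
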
 
This proposition can be easily modified
to obtain the following result, which is better suited to our context: 
\begin{prop}
\label{propcvg} 
Let $(f_n)$ and $(g_n)$ be two tight sequences of random elements valued in
$\HH_\epsilon$.  Denote as $f_n(z) = \sum_{k=0}^\infty a_k^{(n)} z^k$ and
$g_n(z) = \sum_{k=0}^\infty b_k^{(n)} z^k$ the power series representations of
$f_n$ and $g_n$ in $D(0,1-\epsilon)$ respectively. If for each fixed positive 
integer $m$, it holds that $(a_0^{(n)}, \ldots, a_m^{(n)}) \sim_n (b_0^{(n)}, 
 \ldots, b_m^{(n)})$, then $f_n \sim_n g_n$.  
\end{prop}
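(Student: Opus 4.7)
The plan is a standard subsequence argument, using as its only nontrivial input the fact that the law of an $\HH_\epsilon$-valued random variable is determined by the joint laws of finitely many of its Taylor coefficients at $0$. Since $\HH_\epsilon$ is Polish and both $(f_n)$ and $(g_n)$ are tight, Prokhorov's theorem immediately gives the relative compactness of the two sequences of laws, which is the first requirement in the definition of $\sim_n$. It therefore suffices to prove that $\E F(f_n) - \E F(g_n) \to 0$ for every bounded continuous $F : \HH_\epsilon \to \R$.

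I would argue by contradiction. If this did not hold, there would exist $c > 0$ and a subsequence along which $|\E F(f_n) - \E F(g_n)| \geq c$. Using tightness of $(f_n)$ and then of $(g_n)$, I can extract a common sub-subsequence $(n_k)$ along which $f_{n_k} \to f$ and $g_{n_k} \to g$ in distribution, for some $\HH_\epsilon$-valued random elements $f$ and $g$. By Cauchy's integral formula, each coefficient functional $h \mapsto h^{(k)}(0)/k!$ is continuous on $\HH_\epsilon$, so the continuous mapping theorem yields, for every fixed $m$, the convergences in distribution $(a_0^{(n_k)}, \ldots, a_m^{(n_k)}) \to (a_0, \ldots, a_m)$ and $(b_0^{(n_k)}, \ldots, b_m^{(n_k)}) \to (b_0, \ldots, b_m)$, where $(a_k)$ and $(b_k)$ are the power series coefficients of $f$ and $g$ respectively. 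Combining this with the hypothesis on the coefficients forces $(a_0, \ldots, a_m)$ and $(b_0, \ldots, b_m)$ to have the same law for every $m$: for any bounded continuous $\phi$ on $\C^{m+1}$, the difference $\E \phi(a_0, \ldots, a_m) - \E \phi(b_0, \ldots, b_m)$ equals the limit along $(n_k)$ of $\E \phi(a_0^{(n)}, \ldots, a_m^{(n)}) - \E \phi(b_0^{(n)}, \ldots, b_m^{(n)})$, which tends to zero by the assumed asymptotic equivalence of the coefficient vectors.

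It remains to promote this equality of all finite-dimensional coefficient distributions to the equality in law of $f$ and $g$ as $\HH_\epsilon$-valued random variables. This is a classical fact: the coefficient map $h \mapsto (h^{(k)}(0)/k!)_{k\geq 0}$ is a continuous injection from $\HH_\epsilon$ into $\C^\N$, so the Borel law of an $\HH_\epsilon$-valued random variable, viewed through its coefficients, is determined by its finite-dimensional marginals via Kolmogorov's extension theorem. Hence $\E F(f_{n_k}) \to \E F(f) = \E F(g) \leftarrow \E F(g_{n_k})$, contradicting the choice of subsequence and completing the proof. The only substantive ingredient beyond Prokhorov's theorem and the continuous mapping theorem is this last identification of a distribution on $\HH_\epsilon$ by the finite-dimensional laws of its coefficients, and I do not expect any genuine obstacle in carrying out the argument.
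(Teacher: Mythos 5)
Your argument is correct, and it is essentially the proof the paper has in mind: the paper omits it as an "easy modification" of the preceding proposition, and your double-subsequence argument (Prokhorov plus continuity of the coefficient functionals, then the classical fact that the law of an $\HH_\epsilon$-valued random element is determined by the finite-dimensional laws of its Taylor coefficients) is exactly that modification. The only cosmetic remark is that the final identification step is more directly justified by noting that the coefficient functionals generate the Borel $\sigma$-algebra of $\HH_\epsilon$ (point evaluations are limits of partial sums), or by the Lusin--Suslin theorem for the injective coefficient map, rather than by an appeal to Kolmogorov extension, but this does not affect the validity of the proof.
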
 

To establish Theorem~\ref{th-chpol}, we start by writing $q_n(z)$ as 
\begin{equation}
\label{q-P} 
q_n(z)=\det(1-z X^{(n)})= 1+\sum_{k=1}^{n} (-z)^{k}P_{k}^{(n)}, 
\end{equation} 
where
\[
P_{k}^{(n)} =  \sum_{I \subset [n]: |I|=k} 
 \det X^{(n)}_I . 
\]
\subsection{Tightness of $(q_n)$} 
\label{subsec-tight} 
Our first result pertains to the tightness of the sequence of $\HHd$--valued 
random variables $(q_n)$: 
\begin{prop}
\label{tight_seq}
The sequence $(q_n)$ is tight. 
\end{prop}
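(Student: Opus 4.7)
The plan is to apply Proposition~\ref{cond-tight} by establishing the uniform bound
\[
\E |q_n(z)|^2 \leq \frac{1}{\det(I_n - |z|^2 S^{(n)})},
\]
whose right-hand side will be shown to be bounded on compact subsets of $D(0,1-\delta)$ via Assumption~\ref{bnd-det}. Starting from the expansion~\eqref{q-P}, I would first establish the orthogonality $\E[\det X^{(n)}_I\,\overline{\det X^{(n)}_J}] = 0$ whenever $I \neq J$, which in particular gives $\E[P_k^{(n)}\overline{P_l^{(n)}}] = 0$ for $k\neq l$. Expanding each determinant as a signed sum over permutations, any entry $X_{i,c}$ appearing in one product but having no match in the other contributes a factor of expectation $0$ by independence and $\E W_{11}=0$; the surviving terms force $I=J$ together with $\sigma=\tau$, yielding
\[
\E|P_k^{(n)}|^2 = \sum_{|I|=k}\perm\!\bigl(S^{(n)}_I\bigr), \qquad
\E|q_n(z)|^2 = \sum_{k=0}^n |z|^{2k} \sum_{|I|=k}\perm\!\bigl(S^{(n)}_I\bigr).
\]

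The main combinatorial step is the bound
\[
\sum_{|I|=k}\perm\!\bigl(S^{(n)}_I\bigr) \;\leq\; [\zeta^k]\,\frac{1}{\det(I_n-\zeta S^{(n)})},
\]
where $[\zeta^k]$ denotes coefficient extraction in the power series around $0$. I would rewrite the left-hand side as $\frac{1}{k!}\sum_{(i_1,\ldots,i_k)\text{ distinct}}\sum_{\pi\in S_k}\prod_l S^{(n)}_{i_l,i_{\pi(l)}}$, drop the distinctness constraint (legitimate because $S^{(n)}\geq 0$ entrywise), and observe that for each $\pi$ with cycle type $(k_1,\ldots,k_r)$,
\[
\sum_{(i_1,\ldots,i_k)\in[n]^k}\prod_{l=1}^k S^{(n)}_{i_l,i_{\pi(l)}} = \prod_{s=1}^r \tr\!\bigl((S^{(n)})^{k_s}\bigr).
\]
The standard cycle-index identity in $S_k$ then yields
\[
\frac{1}{k!}\sum_{\pi\in S_k}\prod_s \tr\!\bigl((S^{(n)})^{k_s}\bigr) = [\zeta^k]\exp\!\Bigl(\textstyle\sum_{m\geq 1}\tfrac{\zeta^m \tr((S^{(n)})^m)}{m}\Bigr) = [\zeta^k]\,\frac{1}{\det(I_n-\zeta S^{(n)})},
\]
the last equality being~\eqref{dev-det} extended to complex $\zeta$ with $|\zeta|<1/\rho(S^{(n)})$, an interval that contains $[0,1)$ for all large $n$ by~\eqref{rho<1}.

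Since each $\tr((S^{(n)})^m) \geq 0$, all Taylor coefficients of $1/\det(I_n-\zeta S^{(n)})$ at $0$ are non-negative, so substituting $\zeta = |z|^2 \in [0,1)$ and summing the preceding estimate over $k$ produces the announced $L^2$ bound. For a compact $\cK\subset D(0,1-\delta)$, set $r=\max_{z\in\cK}|z|<1-\delta$ and $\varepsilon=(1-r^2)/2\in(0,1]$; monotonicity of $\gamma\mapsto\det(I_n-\gamma S^{(n)})$ on $[0,1)$ together with Assumption~\ref{bnd-det} yields $\det(I_n-|z|^2 S^{(n)})\geq \det(I_n-(1-\varepsilon)S^{(n)})\geq c(\varepsilon)>0$ uniformly for $z\in\cK$ and large $n$. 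Proposition~\ref{cond-tight} then delivers tightness of $(q_n)$ in $\HHd$. The main obstacle is the combinatorial inequality bounding $\sum_{|I|=k}\perm(S^{(n)}_I)$ by the coefficients of $1/\det(I_n-\zeta S^{(n)})$; once this is in hand, the remainder is a routine combination of the orthogonality calculation with Assumption~\ref{bnd-det}.
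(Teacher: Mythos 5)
Your proof is correct, and it follows the paper's overall skeleton — the orthogonality computation giving $\E|q_n(z)|^2=\sum_{k}|z|^{2k}\sum_{|I|=k}\perm(S^{(n)}_I)=\perm(I+|z|^2S^{(n)})$, a bound of this permanent by the reciprocal of $\det(I-|z|^2S^{(n)})$, then Assumption~\ref{bnd-det} and Proposition~\ref{cond-tight} — but you prove the key permanent-versus-determinant estimate differently. The paper isolates it as Lemma~\ref{perm} and obtains it by embedding $S^{(n)}$ into Kershaw's Fredholm permanent theory: the identity $\bd^{(n)}(w)\bp^{(n)}(w)=1$, a Pringsheim-type singularity argument to push the radius of convergence of $\bp^{(n)}$ beyond $1-\varepsilon$, and the termwise comparison $p^{(n)}_k\leq\bp^{(n)}_k$. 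You instead derive the coefficientwise bound $\sum_{|I|=k}\perm(S^{(n)}_I)\leq[\zeta^k]\exp\bigl(\sum_m \zeta^m\tr((S^{(n)})^m)/m\bigr)$ directly, by dropping the distinctness constraint (legitimate by entrywise non-negativity), factorizing over cycles of each permutation, and invoking the cycle-index/exponential formula together with~\eqref{dev-det}; this is more elementary and self-contained (no citation to Kershaw, no singular-point argument), at the cost of redoing by hand the combinatorics that the Fredholm permanent machinery packages. Your computation is in fact exactly the expansion of the paper's $\bp^{(n)}_k$, so the two routes prove the same inequality. Two cosmetic points: \eqref{rho<1} only gives $\rho(S^{(n)})\leq 1+o(1)$, so the series identity $\exp\bigl(\sum_m\zeta^m\tr((S^{(n)})^m)/m\bigr)=1/\det(I-\zeta S^{(n)})$ is guaranteed for large $n$ on $[0,r^2]$ with $r=\max_{\cK}|z|<1-\delta$ rather than on all of $[0,1)$ — which is all you use, and matches the paper's own phrasing; and your $L^2$ bound is uniform only for large $n$, but the finitely many remaining $q_n$ do not affect tightness since each $\E|q_n(z)|^2$ is a polynomial in $|z|^2$, a point the paper also leaves implicit.
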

To prove this proposition, we need the following result. 
\begin{lem} 
\label{perm} 
Let Assumption~\ref{bnd-det} hold true. Then, 
\[
\forall\varepsilon > 0, \ 
 \sup_n 
  \perm\left( I + ( 1 - \varepsilon ) S^{(n)} \right)  < \infty, 
\]
where $\perm M$ is the permanent of the matrix $M$. 
\end{lem}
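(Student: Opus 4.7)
The plan is to upper bound $\perm\bigl(I + (1-\varepsilon) S^{(n)}\bigr)$ by something comparable to $\det\bigl(I - (1-\varepsilon) S^{(n)}\bigr)^{-1}$ via a common cycle expansion, and then invoke Assumption~\ref{bnd-det}. Writing $A = (1-\varepsilon) S^{(n)}$, I first expand
\[
\perm(I + A) = \sum_{J \subset [n]} \perm(A_J)
\]
by selecting the set $J$ of indices where the entry of $A$ rather than of $I$ is chosen. Decomposing each permutation of $J$ by its cycle structure, this becomes
\[
\perm(I + A) = \sum_{\cS} \prod_{c \in \cS} w(c),
\]
where $\cS$ ranges over collections of vertex-disjoint simple directed cycles in the complete digraph on $[n]$ (self-loops of length one allowed, corresponding to diagonal entries $A_{ii}$) and $w(c) = \prod_{(i,j)\in c} A_{ij}$ is the product of entries along $c$.

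Since $A$ has non-negative entries, dropping the vertex-disjointness constraint only enlarges the sum, so
\[
\perm(I + A) \leq \prod_{c} \bigl(1 + w(c)\bigr) \leq \exp\Bigl(\sum_{c} w(c)\Bigr) = \exp\Bigl(\sum_{k=1}^n T_k\Bigr),
\]
where $T_k$ denotes the total weight of the simple cycles of length $k$. Each simple cycle of length $k$ is represented by exactly $k$ ordered sequences of distinct indices (one per cyclic rotation), while $\tr(A^k) = \sum_{i_1,\ldots,i_k} A_{i_1 i_2}\cdots A_{i_k i_1}$ sums over all closed walks of length $k$ with non-negative contributions. Hence $T_k \leq \tr(A^k)/k$, and
\[
\log \perm(I + A) \leq \sum_{k=1}^n \frac{\tr(A^k)}{k} \leq \sum_{k=1}^\infty \frac{(1-\varepsilon)^k \tr\bigl((S^{(n)})^k\bigr)}{k}.
\]

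To close the argument, I use~\eqref{rho<1} to note that $(1-\varepsilon)\rho(S^{(n)}) < 1$ for all sufficiently large $n$, so the series on the right converges and equals $-\log \det\bigl(I - (1-\varepsilon) S^{(n)}\bigr)$ by~\eqref{dev-det}. By Assumption~\ref{bnd-det}, $\det\bigl(I - (1-\varepsilon) S^{(n)}\bigr) \geq c > 0$ eventually, giving a uniform upper bound on $\log\perm\bigl(I + (1-\varepsilon) S^{(n)}\bigr)$ for large $n$; the finitely many initial indices contribute only a finite constant. The main delicate step is the combinatorial reorganization that identifies $\perm(I+A)$ with a sum over vertex-disjoint cycle collections and correctly keeps track of the factor $1/k$ relating $T_k$ to $\tr(A^k)$; everything downstream is then a direct application of the assumption.
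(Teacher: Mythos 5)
Your proof is correct, and it takes a genuinely different and more elementary route than the paper. You establish directly the deterministic inequality $\perm(I+A)\leq \det(I-A)^{-1}$ for an entrywise non-negative matrix $A$ with $\rho(A)<1$, by expanding $\perm(I+A)=\sum_J\perm(A_J)$ into collections of vertex-disjoint simple directed cycles, dropping the disjointness constraint, using $1+x\leq e^x$, and bounding the total weight of simple $k$-cycles by $\tr(A^k)/k$; the conclusion then follows from \eqref{rho<1}, the expansion \eqref{dev-det} and Assumption~\ref{bnd-det}, exactly as you say (the restriction to large $n$ so that $(1-\varepsilon)\rho(S^{(n)})<1$ is handled correctly, and finitely many initial indices are harmless). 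The paper instead identifies $S^{(n)}$ with a piecewise-constant integral kernel and invokes Kershaw's Fredholm permanent theory: the reciprocal identity \eqref{kersh} between the Fredholm determinant and the Fredholm permanent series, combined with a Pringsheim-type argument (a power series with non-negative coefficients is singular at its radius of convergence) to show that the radius of convergence $R^{(n)}$ of the permanent series satisfies $\liminf_n R^{(n)}\geq 1$, and finally the coefficientwise comparison $p^{(n)}_k\leq\bp^{(n)}_k$. Your approach avoids the kernel detour and the external inputs from \cite{ker-79} and \cite{tit-livre39}, at the price of the (correct but slightly delicate) combinatorial bookkeeping relating cycle weights to $\tr(A^k)/k$; both routes deliver the same quantitative bound $\perm(I+(1-\varepsilon)S^{(n)})\leq\det(I-(1-\varepsilon)S^{(n)})^{-1}$ up to the large-$n$ restriction, and in particular your bound also yields $\sup_n p^{(n)}_k<\infty$, which is all that is reused later in Lemma~\ref{P-PM}.
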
 
\begin{proof} 
Given $n > 0$, we identify the matrix $S^{(n)}$ with an integral kernel to
which we apply the Fredholm permanent theory developed in~\cite{ker-79}.  Our
kernel $\bS^{(n)} : [0,1) \times [0,1) \to \R_+$ is defined as $\bS^{(n)}(x,y)
= n S^{(n)}(i,j)$ when $(x,y) \in \left[ \frac{i-1}{n}, \frac in \right) \times
\left[ \frac{j-1}{n}, \frac jn \right), i,j \in[n]$. Following \cite{ker-79},
define the function $\bp^{(n)} : \C \to \C$ through the power series 
\[
\bp^{(n)}(w) = 1 + \sum_{k=1}^\infty \bp^{(n)}_k w^k, 
\]
with 
\[ 
\bp^{(n)}_k = \frac{1}{k !} 
 \int_0^1 \int_0^1 \cdots \int_0^1 
 \perm \bS^{(n)}\left(\begin{matrix} x_1 & x_2 & \cdots x_k \\  
  x_1 & x_2 & \cdots x_k \end{matrix}\right) dx_1 dx_2 \ldots dx_k . 
\]
Notice that $|\bp^{(n)}_k| \leq (n \| S^{(n)} \|_\infty)^k$.  Thus, the radius
of convergence $R^{(n)}$ of this series satisfies 
$R^{(n)} \geq 1/ (n \| S^{(n)} \|_\infty) > 0$, which shows that there exists a
centered open disk where $\bp^{(n)}(w)$ is well-defined and analytic. Let 
 $\bd: \C \to \C$ be given by the series
\[
\bd^{(n)}(w) = 1 + \sum_{k=1}^\infty (-1)^k \bd^{(n)}_k w^k, 
\]
with 
\[ 
\bd^{(n)}_k = \frac{1}{k !} 
 \int_0^1 \int_0^1 \cdots \int_0^1 
 \det \bS^{(n)}\left(\begin{matrix} x_1 & x_2 & \cdots x_k \\  
  x_1 & x_2 & \cdots x_k \end{matrix}\right) dx_1 dx_2 \ldots dx_k . 
\]
It is easy to see that for each $k \in [n]$, it holds that 
\[ 
\bd^{(n)}_k = 
 \int_0^1 dx_1 \int_0^{x_1} dx_2 \cdots \int_0^{x_{k-1}} dx_k 
 \det \bS^{(n)}\left(\begin{matrix} x_1 & x_2 & \cdots x_k \\  
  x_1 & x_2 & \cdots x_k \end{matrix}\right) 
 = \sum_{I\subset[n], |I|= k} \det S^{(n)}_I  , 
\]
and $\bd^{(n)}_k = 0$ for $k > n$. Therefore, $\bd^{(n)}(w)$ coincides 
with the reverse characteristic polynomial 
\[
\bd^{(n)}(w) = \det\left( I - w S^{(n)} \right). 
\]
Theorem 4.4~(a) of \cite{ker-79} states that 
\begin{equation}
\label{kersh} 
\bd^{(n)}(w) \bp^{(n)}(w) = 1 
\end{equation} 
for $w$ in the open disk of radius $R^{(n)}$. 
Since $\bp^{(n)}_k \geq 0$ for each $k$, the spectral radius
$R^{(n)}$ is a singular point of $\bp^{(n)}(w)$  (see
\cite[Fact~7.21]{tit-livre39}), and thus, it is a zero of $\bd^{(n)}(w)$ by the
previous identity.  By Assumption~\eqref{bnd-det}, we then obtain that 
$\liminf_n R^{(n)} \geq 1$, and by Identity~\eqref{kersh} again, it holds that
\[
\forall\varepsilon > 0, \quad 
  \sup_n \bp^{(n)}(1 - \varepsilon) < \infty. 
\]
Let $p^{(n)}(w) = \perm\left( I + w S^{(n)} \right)$. As is well known 
(see, \emph{e.g.}, \cite[Th.~1.4]{min-livre78}), $p^{(n)}(w) = 1 + 
\sum_{k=1}^n p^{(n)}_k w^k$ with  
\[
p^{(n)}_k = \sum_{I\subset[n], |I|= k} \perm S^{(n)}_I  
   \quad \text{for } k \in [n] .  
\]
Writing 
\[ 
\bp^{(n)}_k = 
 \int_0^1 dx_1 \int_0^{x_1} dx_2 \cdots \int_0^{x_{k-1}} dx_k 
 \perm \bS^{(n)}\left(\begin{matrix} x_1 & x_2 & \cdots x_k \\  
  x_1 & x_2 & \cdots x_k \end{matrix}\right) , 
\]
it is easy to see that $p^{(n)}_k \leq \bp^{(n)}_k$ for each $k\in[n]$, 
therefore, 
\[
\forall\varepsilon > 0, \quad 
  \sup_n p^{(n)}( 1 - \varepsilon) < \infty, 
\]
which is the required result. 
\end{proof} 

\begin{proof}[Proof of Proposition~\ref{tight_seq}] 
To prove our proposition, we bound $\E |q_n(z)|^2$ and use
Proposition~\ref{cond-tight}. 

Denoting as $\mathfrak S_I$ the group of permutations over a set 
$I \subset [n]$, and $\sign(\sigma)$ the signature of a permutation $\sigma$, 
we first observe that 
\[
 \E \det X_I = \sum_{\sigma \in \mathfrak S_I} 
  \sign(\sigma)\E \prod_{i\in I} X_{i, \sigma(i)}=0 , 
\]
since the entries of $X$ are centered. 

Similarly for  $J, I \subset [n]$ such that $I \neq J$ it is true that 
\[
\E \det X_I \, \overline{\det X_J}=0 . 
\]
Lastly, for any $I \subset [n]$, 
\[
\E|\det X_I |^2= \E \det X_I \, \overline{\det X_I}
 = \sum_{\sigma \in \mathfrak S_I} \prod_{i\in I} 
   s_{i,\sigma(i)}\E |W_{i, \sigma(i)}|^2= \perm S_I.  
\]
We therefore have 
\[ 
   \E |q_n(z)|^2 = \E \left| 1 + 
  \sum_{k=1}^n (-z)^k \sum_{I \subseteq [n]:|I|=k} 
          \det X_I \right|^2 = \perm\left( I + |z^2| S^{(n)} \right), 
  \]
which is bounded by the previous lemma on the compacts of $D(0,1)$. 
\end{proof}

\subsection{Asymptotics of the finite-dimensional distributions when 
$W_{11}$ is bounded} 
\label{subsec-finidim} 
Having established the tightness of $(q_n)$, it remains to examine the
distributional large--$n$ properties of the random vector $(P_1^{(n)}, \ldots,
P_{k}^{(n)})$ for each fixed integer $k > 0$, and apply
Proposition~\ref{propcvg} above.  To this end, we temporarily assume that the
random variables $W_{ij}$ are bounded by a constant. We also rely on the fact
that in order to study the distribution of $(P_1^{(n)}, \ldots, P_{k}^{(n)})$,
it is enough to study the distribution of $(\tr X^{(n)},\ldots, \tr
(X^{(n)})^k)$ for large $n$, a much easier task. Specifically, for $z\in\C$, 
the series $\sum_{k=1}^{\infty} (z^{k}/k) (X^{(n)})^{k}$ is well-defined for 
$|z|$ small enough, and we can express $q_n(z)$ as 
\begin{equation} 
\label{q_=tr}
   q_n(z)= \exp \left(
 -\sum_{k=1}^{\infty}\tr((X^{(n)})^{k}) \frac{z^{k}}{k} \right)
\end{equation} 
for $|z|$ small enough. Recalling the identity~\eqref{q-P}, we obtain that the 
$k$--tuple $(P_{1}^{(n)}, \ldots, P^{(n)}_k)$ is a polynomial function of $(\tr
X^{(n)},\ldots,\tr (X^{(n)})^k)$ independent of the dimension $n$, by an
expansion to a power series on both sides of \eqref{q_=tr} and by examining at
the first $k$ terms of the expansion.
Thus, we end up that in order to prove the asymptotic equivalence
in~\eqref{asymptotic_equivalence_of_qn}, it is sufficient to examine the
large-$n$ distributions of the vectors $(\tr X^{(n)},\ldots,\tr (X^{(n)})^k)$
for any integer $k > 0$. We shall analyze the distributions of these vectors 
with the help of the moment method, which explains why the assumed boundedness 
of the $W_{ij}$ is important in our proof. Of course, the finiteness of all 
their moments would have been enough. 
\begin{prop}
\label{fin-dim}
Assume that the random variables $W_{ij}$ are bounded by a constant. Consider
the sequence of independent complex-valued Gaussian random variables
$(Z_\ell)_{\ell\geq 1}$ defined in the statement of Theorem~\ref{th-chpol}. For
each integers $n, \ell > 0$, define $\bm_\ell^{(n)}$ as 
\[
\bm_\ell^{(n)} = 
\left\{\begin{array}{cl} 
 \left(\E W_{11}^2\right)^{\ell/2} \tr((S^{(n)})^{\ell/2}) &\text{if } 
     \ell \text{ is even, } \\
  0  &\text{if } \ell \text{ is odd.} \\
 \end{array}\right. 
\]
Then, for each fixed integer $k > 0$, the asymptotic equivalence 
\begin{equation} 
\label{as-eq} 
\left( \tr X^{(n)}, \ldots, \tr (X^{(n)})^{k} \right) \sim_n  
\left( \sqrt{\tr S^{(n)}} Z_{1} + \bm_{1}^{(n)}, \ldots, 
 \sqrt{k \tr (S^{(n)})^{k}} Z_{k} + \bm_{k}^{(n)} \right) 
\end{equation} 
holds true. 
\end{prop}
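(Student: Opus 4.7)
The plan is to apply the moment method. Because the $W_{ij}$ are bounded by hypothesis, every $\tr(X^{(n)})^\ell$ has moments of all orders uniformly bounded in $n$, which one sees by expanding the trace and combining $s_{ij}^{(n)}\leq C/K_n$ with the uniform boundedness of $\tr(S^{(n)})^k$ for each fixed $k$ (the latter a consequence of Assumption~\ref{bnd-det} via the identity~\eqref{dev-det}). Both sides of~\eqref{as-eq} are therefore tight, and the equivalence $\sim_n$ reduces to showing that for each tuple of non-negative integers $(p_\ell,q_\ell)_{\ell=1}^{k}$, the joint moment
\[
\E\!\left[\prod_{\ell=1}^{k}\bigl(\tr(X^{(n)})^{\ell}-\bm_\ell^{(n)}\bigr)^{p_\ell}\,\overline{\bigl(\tr(X^{(n)})^{\ell}-\bm_\ell^{(n)}\bigr)}^{q_\ell}\right]
\]
differs by $o(1)$ from the corresponding moment of the Gaussian vector $(\sqrt{\ell\,\tr(S^{(n)})^\ell}\,Z_\ell)_{\ell=1}^k$, which is expressed explicitly by Wick's formula from $\E Z_\ell^2=(\E W_{11}^2)^\ell$, $\E|Z_\ell|^2=1$, and the independence of the $Z_\ell$'s.

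The main step is to expand $\tr(X^{(n)})^{\ell}=\sum_{i_1,\ldots,i_\ell}X_{i_1 i_2}\cdots X_{i_\ell i_1}$ as a sum over closed directed walks of length $\ell$, and to use $\E W_{ij}=0$ to discard every term in which some directed edge is visited only once. A product of $\sum_\ell(p_\ell+q_\ell)$ such traces then becomes a sum over tuples of closed walks, which I would group by the equivalence class of their combined shape (the underlying directed multigraph with its edge multiplicities, together with the incidence pattern of each individual walk). Two families of shapes will carry the entire limiting contribution. The first produces the mean: a single closed walk of length $2m$ that traces a simple directed cycle of length $m$ twice in a row, so that each of the $m$ directed edges is used twice; this contributes $(\E W_{11}^2)^m\prod_{k=1}^{m}s_{i_k i_{k+1}}^{(n)}$ and sums to $\bm_{2m}^{(n)}=(\E W_{11}^2)^m\tr(S^{(n)})^m$, while no such shape exists for odd $\ell$, consistent with $\bm_\ell^{(n)}=0$. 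The second family is pairwise: two walks of equal length $\ell$ that share all of their $\ell$ directed edges, which happens precisely when one labelling is a cyclic shift of the other, giving the combinatorial prefactor $\ell$. Setting $T_\ell:=\tr(X^{(n)})^\ell-\bm_\ell^{(n)}$, the pair matches $\E[T_\ell T_\ell]\to\ell(\E W_{11}^2)^\ell\tr(S^{(n)})^\ell$ and $\E|T_\ell|^2\to\ell\,\tr(S^{(n)})^\ell$, which are exactly the pseudo-covariance and covariance of $\sqrt{\ell\,\tr(S^{(n)})^\ell}\,Z_\ell$. Two walks of different lengths cannot share all of their edges, so cross-moments vanish and asymptotic independence across $\ell$ is recovered; higher joint moments then fall into place by Wick-type pairings of the walks into matched, same-length, edge-sharing pairs.

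The main obstacle is the quantitative bound that every combined-walk shape other than the two above contributes $o(1)$. The plan is the standard accounting: each use of a directed edge $(a,b)$ brings a factor $\sqrt{s_{ab}^{(n)}}\leq\sqrt{C/K_n}$, each free vertex label contributes at most a factor $n$, and the remaining summations are absorbed into the uniformly bounded $\tr(S^{(n)})^k$. Any extra feature beyond the minimum — a self-loop at a vertex, a branching, a higher-multiplicity edge, or an unmatched directed edge across walks of different lengths — costs a factor $K_n^{-1/2}$ that is not compensated by a gain in vertex freedom, and so disappears as $n\to\infty$. The most delicate point of the bookkeeping is the careful treatment of directed versus undirected edges, which is what produces the specific exponent $\tr(S^{(n)})^{\ell/2}$ in the mean and the specific prefactor $(\E W_{11}^2)^\ell$ matching $\E Z_\ell^2$ in the pseudo-variance; with that in hand, the argument is a direct sparsity-profile adaptation of the classical Wigner moment-method analysis.
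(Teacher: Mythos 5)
Your overall strategy is the same as the paper's: expand the traces over closed walks, use centering to kill walks with a singly-visited directed edge, identify the twice-traversed cycle shapes as the mean $\bm_{2m}^{(n)}$ and the cyclically shifted matched pairs as the (pseudo-)covariances $\ell\,\tr(S^{(n)})^\ell$, and match Gaussian moments via Wick. (The paper organizes this slightly differently, splitting $\tr (X^{(n)})^k$ into a distinct-index part $R_k$ and a repeated-index part $Q_k$, proving a joint Wick-type moment limit for the $R_k$'s and a concentration statement $Q_k-\bm_k\to 0$, then passing to subsequences along which $\tr (S^{(n)})^\ell$ converges; your direct computation of centered joint moments is a legitimate variant of this, modulo the same subsequence/moment-determinacy step.)

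However, there is a genuine gap exactly at the point you identify as the main obstacle: the quantitative bound that all other shapes are negligible. Your proposed accounting --- a factor $\sqrt{C/K_n}$ per edge use and a factor $n$ per free vertex label, with the claim that any extra coincidence ``costs a factor $K_n^{-1/2}$ that is not compensated by a gain in vertex freedom'' --- is the classical flat-profile ($s_{ij}=1/n$, $K_n=n$) bookkeeping, and it does not close here. With $s^{(n)}_{ij}\le C/K_n$ and $K_n=o(n)$ allowed, a shape with $v$ free vertices and $e$ edge uses is bounded this way by $n^{v}(C/K_n)^{e/2}$; already for the leading double cycle of length $m$ this gives $(Cn/K_n)^{m}\to\infty$, and conversely a gained vertex freedom is worth a factor $n\gg\sqrt{K_n}$, so the stated cost/benefit comparison is false in the sparse regime. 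The entries being individually small is not what makes the error terms small; what does is the row-sum bound $\vertiii{S^{(n)}}\le C_S$ together with $\tr (S^{(n)})^k\le C$ and $\|(S^{(n)})^k\|_\infty\le C/K_n$ (Lemma~\ref{trSk}). The paper's proof never counts free vertices against $n$: after merging coinciding edges it rewrites each residual sum as products of traces of powers of $S^{(n)}$ (each $O(1)$) times at least one entry of a power of $S^{(n)}$ (each $O(1/K_n)$) or one extra factor $\sqrt{\|S\|_\infty}$, which is the content of Lemma~\ref{errS} and the three-case bound on $\chi$ in the proof of Lemma~\ref{Rk}. Your sentence about absorbing sums into $\tr(S^{(n)})^k$ gestures at this, but the explicit scheme you give instead would fail, so the negligibility of the non-leading shapes --- and hence the proposition in the sparse and general variance-profile cases it is designed to cover --- is not established by your argument as written.
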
 
Most of the remainder of this section is devoted to the proof of this 
proposition. We start with a simple lemma. 
\begin{lem}
\label{trSk}
Let Assumptions~\ref{bnd-S} and \ref{bnd-det} hold true. Then 
\[
\forall k > 0, \exists C > 0, \ \tr S^k \leq C \ \text{and} \ 
  \| S^k \|_\infty \leq C / K_n . 
\]
\end{lem}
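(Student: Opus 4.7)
The plan is to prove the two estimates separately, using Assumption~\ref{bnd-S} for the entrywise bound on $S^k$ and Assumption~\ref{bnd-det} via the identity~\eqref{dev-det} for the trace bound.

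For $\|S^k\|_\infty \leq C/K_n$, I would proceed by induction or direct expansion. Write $(S^k)_{ij} = \sum_m (S^{k-1})_{im} s_{mj}$ and split on the two alternatives in Assumption~\ref{bnd-S}(\ref{rowsum}). In the first case $\vertiii{S} \leq C_S$, submultiplicativity of the max row $\ell^1$--norm for entrywise non-negative matrices gives $\vertiii{S^{k-1}} \leq C_S^{k-1}$, hence $\sum_m (S^{k-1})_{im} \leq C_S^{k-1}$ for every $i$. Pulling out the bound $s_{mj} \leq C'_S/K_n$ from Assumption~\ref{bnd-S}(\ref{bnd-sij}) yields
\[
(S^k)_{ij} \leq \frac{C'_S}{K_n} \sum_m (S^{k-1})_{im} \leq \frac{C'_S C_S^{k-1}}{K_n}.
\]
In the second case $\vertiii{S^\top} \leq C_S$, write $(S^k)_{ij} = \sum_m s_{im} (S^{k-1})_{mj}$ instead, use $\vertiii{(S^{k-1})^\top} \leq C_S^{k-1}$ to bound column sums of $S^{k-1}$, and again pull out the factor $C'_S/K_n$ from $s_{im}$.

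For $\tr S^k \leq C$, I would invoke the expansion~\eqref{dev-det}, which is valid for every $\gamma \in [0,1)$ and every sufficiently large $n$ because $\limsup_n \rho(S^{(n)}) \leq 1$ by Assumption~\ref{bnd-det}. Fix $\varepsilon = 1/2$, say, and take $\gamma = 1 - \varepsilon$. Assumption~\ref{bnd-det} yields
\[
\limsup_n \sum_{\ell=1}^\infty (1-\varepsilon)^\ell \frac{\tr (S^{(n)})^\ell}{\ell} = \limsup_n \bigl( -\log \det(I_n - (1-\varepsilon) S^{(n)}) \bigr) < \infty.
\]
The crucial observation is that each summand is non-negative, since $\tr(S^{(n)})^\ell = \sum_{i_1,\ldots,i_\ell} s_{i_1 i_2} s_{i_2 i_3}\cdots s_{i_\ell i_1} \geq 0$ (a weighted count of closed walks in a graph with non-negative weights). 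Hence every individual term is bounded by the full sum, so for the fixed $k$ in the lemma,
\[
\tr (S^{(n)})^k \leq \frac{k}{(1-\varepsilon)^k} \bigl( -\log \det(I_n - (1-\varepsilon) S^{(n)}) \bigr),
\]
which is uniformly bounded in $n$ (taking $n$ large and absorbing the finitely many remaining $n$ into the constant).

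The main potential obstacle is that the identity~\eqref{dev-det} is only valid for sufficiently large $n$; I would handle this by noting that the finitely many excluded $n$ contribute only a bounded correction to the bound on $\tr (S^{(n)})^k$ (since each $\tr (S^{(n)})^k$ is a finite quantity for each $n$), and absorbing them into the constant $C$. Everything else is routine, and the two bounds together give the lemma.
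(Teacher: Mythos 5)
Your proof is correct and follows essentially the same route as the paper: the trace bound comes from the expansion~\eqref{dev-det} evaluated at a fixed $\gamma=1/2$ together with Assumption~\ref{bnd-det} and the non-negativity of each term, and the entrywise bound comes from combining $\|S\|_\infty \leq C'_S/K_n$ with the max row $\ell^1$--norm bound on $S^{k-1}$ (the paper simply reduces the transposed alternative to the first one ``without loss of generality'' instead of treating it separately as you do). No gaps.
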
 
\begin{proof}
Using Assumption~\ref{bnd-det} and recalling the development~\eqref{dev-det},
we obtain the first bound by setting, \emph{e.g.}, $\gamma = 1/2$. 

Assumption~\ref{bnd-S}--\eqref{bnd-sij} asserts that $\| S \|_\infty \leq C /
K_n$, thus, the second bound is effective for $k=1$.  Assume without generality
loss that $\vertiii{S} \leq C$ from Assumption~\ref{bnd-S}--\eqref{rowsum}.
For $k > 1$, we have $\| S^k \|_\infty \leq \vertiii{S} \| S^{k-1} \|_\infty
\leq \cdots \leq \vertiii{S}^{k-1} \| S \|_\infty \leq C / K_n$.  
\end{proof} 
Given a $k$--tuple $\bI = (i_1,\ldots, i_k) \subset[n]^k$, we write 
\[
X_{\bI} = X_{i_1i_2} X_{i_2 i_3} \ldots X_{i_{k-1} i_k} X_{i_k i_1} . 
\]
As in \cite{bordenave2021convergence}, we write 
$\tr (X^k) = \sum_{\bI\in[n]^k} X_{\bI}$ as 
\[
\tr ((X^{(n)})^k) = R_k^{(n)} + Q_k^{(n)} , 
\]
where, denoting as $\cD_k$ the sub-set of $[n]^k$ defined as 
\[
\cD_k = \{ (i_1,\ldots, i_k) \in [n]^k \ : \ \forall j\neq\ell \in [k], 
  i_j \neq i_\ell \}, 
\]
we set 
\[
R_k = \sum_{\bI \in \cD_k} X_{\bI}, \quad \text{and} \quad 
Q_k = \sum_{\bI \in [n]^k\setminus\cD_k} X_{\bI}. 
\]
It is obvious that $\E R_k = 0$. The analogues of $R_k$ and $Q_k$ are called in
\cite{bordenave2021convergence} the ``random term'' and the ``deterministic
term'' respectively. We shall deal with these two terms separately. The 
following two lemmas are proven in Section~\ref{prf-lm-findim} below. 
\begin{lem}
\label{Rk}  
Let the $m$--tuple $(k_1,\ldots, k_m)$ be as in the statement of
Proposition~\ref{fin-dim}. Given $x \in \C$, use the notation $x^s = x$ when $s
= \cdot$ and $x^s = \bar x$ when $s = *$.  
Let $s_1, \ldots, s_m \in \{ \cdot, * \}$.  

If $m$ is even, and if there exists at least one partition $P$ of $[m]$ into
pairs such that $k_\ell = k_{\ell'}$ if $\ell$ and $\ell'$ are a pair (notation
$\{\ell,\ell'\} \in P$), then, 
\[
\E\left[ R_{k_1}^{s_1} R_{k_2}^{s_2} \ldots R_{k_m}^{s_m} \right] 
- \sum_{P\in\bP} \prod_{\{\ell,\ell'\} \in P} 
  \left( k_\ell \left( \E W_{11}^{s_\ell} W_{11}^{s_{\ell'}} \right)^{k_\ell} 
 \tr S^{k_\ell}  \right) \xrightarrow[n\to\infty]{} 0 , 
\]
where $\bP$ is the set of such partitions. Otherwise, 
\[
\E\left[ R_{k_1}^{s_1} R_{k_2}^{s_2} \ldots R_{k_m}^{s_m} \right] 
  \xrightarrow[n\to\infty]{} 0 . 
\]
\end{lem}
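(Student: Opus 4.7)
The plan is to carry out a moment-method calculation in the spirit of Bordenave, Chafa\"\i{} and Garc\'\i a-Zelada \cite{bordenave2021convergence}, adapted to the variance profile matrix $S$. I would first expand
\[
 \E\bigl[R_{k_1}^{s_1} \cdots R_{k_m}^{s_m}\bigr]
 = \sum_{\bI_1 \in \cD_{k_1}, \ldots, \bI_m \in \cD_{k_m}}
     \E\!\left[\prod_{j=1}^m X_{\bI_j}^{s_j}\right],
\]
and use the independence and centering of the $X_{ij}$: a summand is non-zero only when every ordered pair $(a,b)$ appearing as an edge across the cycles $\bI_1, \ldots, \bI_m$ is traversed at least twice. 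Accordingly, I would classify the surviving tuples by the combined directed multigraph they trace out, and treat each combinatorial type separately.

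For the main term, I call a configuration a \emph{pairing} when the $m$ cycles can be grouped into pairs $\{\ell,\ell'\} \in P$ such that within each pair the two cycles coincide as edge sets (forcing $k_\ell = k_{\ell'}$) and distinct pairs use disjoint vertex sets. Since two closed directed $k$-cycles on the same edge set must be cyclic shifts of one another, a pairing $P$ contributes, up to negligible overlap corrections,
\[
 \prod_{\{\ell,\ell'\}\in P} k_\ell \,
 \bigl(\E W_{11}^{s_\ell} W_{11}^{s_{\ell'}}\bigr)^{k_\ell}
 \sum_{\bI \in \cD_{k_\ell}}
   s_{i_1 i_2} s_{i_2 i_3} \cdots s_{i_{k_\ell} i_1} .
\]
Since $\sum_{\bI \in [n]^{k_\ell}} s_{i_1 i_2} \cdots s_{i_{k_\ell} i_1} = \tr S^{k_\ell}$, and the sum over tuples with at least one repeated index is $o(1)$ by Lemma \ref{trSk} together with $\|S^j\|_\infty \leq C/K_n$ and $K_n \to \infty$, this inner sum equals $\tr S^{k_\ell} + o(1)$, producing the main term in the lemma.

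The residual consists of tuples whose combined multigraph is not a disjoint union of doubled cycles: some edge has multiplicity at least three, cycles from different would-be pairs share a vertex, or $m$ is odd so that no pairing is possible. For any such combinatorial type parametrized by a multigraph $G$ on $v$ vertices with total edge multiplicity $k_1+\cdots+k_m$, the contribution is bounded by $C^{k_1+\cdots+k_m}$ times a sum of products of $s_{ab}$ over edges of $G$, which I would control by repeatedly invoking $\|S^j\|_\infty \leq C/K_n$ and $\vertiii{S} \leq C$ from Assumption \ref{bnd-S} (or its transposed variant). The key estimate is that a pairing maximizes the number of distinct vertices at $v_{\max} = \sum_{\{\ell,\ell'\}\in P} k_\ell$, and any departure from pairing, namely a vertex identification, an edge of multiplicity exceeding two, or an unpaired cycle, strictly reduces the vertex count and produces an extra factor $O(1/K_n)$. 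When $m$ is odd no complete pairing exists, so at least one cycle must share an edge with a cycle that is already paired elsewhere, forcing an edge multiplicity of at least three and hence an $o(1)$ contribution.

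The main obstacle is the combinatorial bookkeeping in this last step: systematically enumerating the non-pairing multigraph types and checking that each contributes $o(1)$. Unlike the uniform case $s_{ij} = 1/n$ treated in \cite{bordenave2021convergence}, where one may rely on a single power count in $n$, here the sums over vertex labelings must be organized so that they reduce to trace-like quantities controlled by Lemma \ref{trSk}. I would formalize this via a graph-reduction scheme in which degree-two vertices are iteratively eliminated to expose powers of $S$ whose traces are uniformly bounded, with each reduction step drawing on either of the row/column-sum bounds in Assumption \ref{bnd-S}.
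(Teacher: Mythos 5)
Your overall strategy is the same as the paper's: expand $\E[R_{k_1}^{s_1}\cdots R_{k_m}^{s_m}]$ over tuples in $\cD_{k_1}\times\cdots\times\cD_{k_m}$, kill by centering every configuration containing a couple visited only once, extract the main term from doubled cycles matched by cyclic shifts (hence the factor $k_\ell(\E W_{11}^{s_\ell}W_{11}^{s_{\ell'}})^{k_\ell}$), replace the constrained sum by $\tr S^{k_\ell}$ up to $O(1/K_n)$, and control the remainder with $\|S^j\|_\infty\le C/K_n$, $\vertiii{S}\le C$ and $\tr S^j\le C$. The problem is that you leave the decisive step --- the bound on all non-pairing configurations --- as a plan (``graph-reduction scheme''), and this is exactly where the proof lives: the paper splits the surviving tuples into $\cA$ (every couple and every index appears exactly twice) and $\cB$ (every couple at least twice, some index at least three times), encodes the $\cB$-constraints into finitely many indicator products, merges repeated couples so that each configuration reduces to closed cycles $\bJ_1,\ldots,\bJ_p$ together with open paths $\bJ_{p+1},\ldots,\bJ_{p+q}$ grafted onto them, and then bounds the three possible situations (a path present; an exponent $\ge 3/2$ on some $s_{ab}$; a vertex shared by two cycles) by $C/K_n^{q}$, $C/\sqrt{K_n}$ and $C/K_n$ respectively. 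Your sketch gestures at this but does not do it.

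Moreover, the one concrete combinatorial claim you commit to in the residual step is false: when $m$ is odd (or, more generally, when no pairing exists) it is not true that some edge must have multiplicity at least three. For instance the five cycles $1\to2\to1$, $2\to3\to2$, $3\to1\to3$, $1\to2\to3\to1$, $1\to3\to2\to1$ traverse every directed edge exactly twice; what fails is vertex-disjointness, since the vertices $1,2,3$ each appear more than twice. Such configurations are exactly the $\cB$-tuples with no tripled couple, and they are killed not by an $s^{3/2}$ factor but by the shared-vertex estimate $\sum_{j}[S^{a}]_{jj}[S^{b}]_{jj}\le \|S^{a}\|_\infty \tr S^{b}\le C/K_n$. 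Relatedly, your heuristic that ``each departure from a pairing costs $O(1/K_n)$'' is too strong: a tripled edge only yields $C/\sqrt{K_n}$ (from $s^{3/2}\le s\sqrt{C'/K_n}$), which is still $o(1)$ and hence sufficient. So the skeleton matches the paper, but the deferred case analysis must actually be carried out, and the odd-$m$/shared-vertex case corrected, before this is a proof.
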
 
\begin{lem}
\label{Qk} 
It holds that 
\[
Q_k - \bm_k \toprobalong 0 . 
\]
\end{lem}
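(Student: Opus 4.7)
The plan is to prove Lemma~\ref{Qk} by the second-moment method: since the $W_{ij}$'s are bounded and thus have finite moments of all orders, it suffices to establish (i) $\E Q_k^{(n)} - \bm_k^{(n)} \to 0$ and (ii) $\var(Q_k^{(n)}) \to 0$, which jointly give $L^2$--convergence and hence convergence in probability. I would first observe that when $\bI \in \cD_k$ the $k$ factors $X_{i_1 i_2}, \ldots, X_{i_k i_1}$ involve pairwise distinct oriented edges, hence are independent and centered, so $\E X_\bI = 0$ and $\E Q_k = \E \tr(X^k)$.

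For (i), I would classify each walk $\bI \in [n]^k$ by its \emph{shape}, i.e.\ the isomorphism type of the closed oriented multigraph on $\{i_1,\ldots,i_k\}$ together with the multiplicity of each oriented edge. Independence and $\E W_{11}=0$ force $\E X_\bI = 0$ unless every oriented edge has multiplicity at least $2$, which in turn forces at most $k/2$ distinct oriented edges. When $k$ is even, the unique ``generic'' configuration is the doubled simple cycle $\bI(\bj) = (j_1, \ldots, j_{k/2}, j_1, \ldots, j_{k/2})$ parameterised by those $\bj \in [n]^{k/2}$ whose $k/2$ oriented edges are pairwise distinct; this contributes
\[
(\E W_{11}^2)^{k/2} \sum_{\bj \in [n]^{k/2}} \prod_\ell s_{j_\ell j_{\ell+1}} \;=\; (\E W_{11}^2)^{k/2} \tr(S^{k/2}) \;=\; \bm_k^{(n)}
\]
up to an $O(1/K_n)$ correction coming from $\bj$'s whose underlying cycle has a repeated edge (controlled by $\|S^j\|_\infty \leq C/K_n$ from Lemma~\ref{trSk}). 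Any other admissible shape has strictly fewer than $k/2$ distinct vertices, so its labeling sum decomposes, by Lemma~\ref{trSk}, into a product of $\tr(S^{j_l}) = O(1)$ factors and at least one $\|S^{j_r}\|_\infty = O(1/K_n)$ factor, hence is itself $O(1/K_n)$. Since the number of shapes depends only on $k$, the total error is $o(1)$; when $k$ is odd no shape with $k/2$ distinct edges exists at all, so $\E Q_k = o(1) = \bm_k^{(n)}$.

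For (ii), I would expand
\[
\E |Q_k|^2 \;=\; \sum_{\bI, \bJ \in [n]^k \setminus \cD_k} \E X_\bI \overline{X_\bJ}
\]
and apply the same shape analysis to the pair $(\bI, \bJ)$ regarded as a joint oriented multigraph with $2k$ directed edges. Pairs in which $\bI$ and $\bJ$ share no oriented edge factor as $\E X_\bI \cdot \E \overline{X_\bJ}$; summed over all such pairs they reproduce $|\E Q_k|^2 = |\bm_k|^2 + o(1)$ modulo negligible overlap terms. For the remaining ``connected'' pairs, where $\bI$ and $\bJ$ share at least one oriented edge, the shared edge forces one extra identification of vertex parameters between the otherwise independent walks, which — by the same mechanism as in (i) — introduces an additional $\|S^j\|_\infty = O(1/K_n)$ factor in the labeling sum; summing over the finitely many joint shapes yields $O(1/K_n)$, so that $\var(Q_k) = O(1/K_n) \to 0$.

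The main obstacle is the combinatorial bookkeeping in both the single- and joint-walk shape enumerations, in particular verifying that every deviation from the doubled-simple-cycle structure in (i), and every joint shape with a shared edge in (ii), truly costs one free vertex index and hence produces a factor $\|S^j\|_\infty$. Once that structural claim is in place, the quantitative content is supplied entirely by Lemma~\ref{trSk}, which converts each such lost index into a factor $C/K_n$ and thereby delivers the required decay.
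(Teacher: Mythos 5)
Your overall strategy is the same as the paper's: compute $\E Q_k$ by isolating the doubled simple cycle as the only non-negligible shape (giving $\bm_k$ up to errors controlled by Lemma~\ref{trSk}/Lemma~\ref{errS}), then kill $\var(Q_k)$ by a joint-shape analysis in which only edge-sharing pairs contribute. However, the step you yourself flag as the "main obstacle" is exactly where the content of the proof lies, and the mechanism you propose for it is not correct as stated, in two places. First, in the mean: a shape in which some oriented edge has odd multiplicity $\geq 3$ does \emph{not} decompose into traces times a factor $\|S^{j}\|_\infty$; the decay there comes from bounding a leftover half-power of an entry, $s^{1/2}\leq\sqrt{\|S\|_\infty}$, and yields only $O(1/\sqrt{K_n})$ (this is the paper's "exponent $\geq 3/2$" case in the analysis of the set $\cB$ in Lemma~\ref{Rk}). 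That rate is still $o(1)$, so the conclusion survives, but it shows "each deviation costs one free vertex, hence a factor $\|S^j\|_\infty$" is not the right structural claim.

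Second, and more seriously, in the variance: the assertion that "the shared edge forces one extra identification of vertex parameters \ldots\ which introduces an additional $\|S^j\|_\infty$ factor" is false as a general statement about pairs of closed walks. If $\bI=\bJ$ ranges over simple cycles with pairwise distinct vertices, the two walks share all their edges, yet the corresponding covariance sum is $\approx \tr S^k=O(1)$ --- this is precisely the non-vanishing fluctuation carried by the random term $R_k$. What rescues the argument is that in $\var(Q_k)$ both walks lie in $[n]^k\setminus\cD_k$, i.e.\ each has a repeated index; the paper's proof explicitly invokes this (together with the case analysis $q>0$, exponent $\geq 3/2$, and the residual case reducing to sums of the form $\sum_{\bJ_1,\bJ_2}S_{\bJ_1}S_{\bJ_2}\1_\times$ with a cross-link, bounded by $C/K_n$). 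Your sketch never uses the restriction to $[n]^k\setminus\cD_k$ in step (ii), so the decay cannot be attributed to the shared edge alone, and the deferred "bookkeeping" is not a routine verification of your stated claim but requires the corrected claim and the three-case analysis that the paper carries out (by reference to its treatment of $\cB$ in Lemma~\ref{Rk}).
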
 

\subsubsection*{Proof of Proposition~\ref{fin-dim}.} 
By Lemma~\ref{trSk}, for each sequence $(n)$ of integers, there exists a
sub-sequence such that for every integer $\ell > 0$, $\tr (S^{(n)})^\ell$
converges to some real number $\bss_\ell$ along this sub-sequence.  Fix an
integer $k > 0$.  Lemma~\ref{Rk} along with the Isserlis/Wick theorem show that
$(R^{(n)}_1,\ldots, R^{(n)}_k)$ converges in distribution along this
sub-sequence to $(\sqrt{\bss_1} Z_1, \ldots \sqrt{k \bss_k} Z_k)$.  By
Lemma~\ref{Qk}, for $\ell\in[k]$, $Q^{(n)}_\ell$ converges in probability along
this sub-sequence to $\left(\E W_{11}^2\right)^{\ell/2} \bss_{\ell/2}$ if 
$\ell$ is even and to zero if $\ell$ is odd. The result stated by 
Proposition~\ref{fin-dim} follows. 

\subsubsection*{Proofs of Lemmas~\ref{Rk} and \ref{Qk}} 
\label{prf-lm-findim} 
The following preliminary result will be needed. 
\begin{lem}
\label{errS} 
Let Assumptions~\ref{bnd-S} and \ref{bnd-det} hold true. Let $k_1, \ldots, 
k_m$ be positive integers, and write $k = k_1+\cdots + k_m$. Decomposing a 
$k$--tuple $\bI \in [n]^k$ as $\bI = (\bI_1, \ldots, \bI_m)$ where 
$\bI_j \in [n]^{k_j}$, it holds that there exists $C > 0$ such that 
\[
0 \leq \tr S^{k_1} \ldots \tr S^{k_m} - 
 \sum_{\bI \in \cD_k} S_{\bI_1} \ldots S_{\bI_m}  \leq C / K_n. 
\]
\end{lem}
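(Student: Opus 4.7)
The plan is to show the lower bound trivially, then control the upper bound by a union bound over pairs of coincident positions, each contribution being directly handled via Lemma~\ref{trSk}. For the lower bound: expanding $\tr S^{k_j} = \sum_{\bI_j \in [n]^{k_j}} S_{\bI_j}$ gives the identity
\[
\tr S^{k_1} \cdots \tr S^{k_m} = \sum_{\bI \in [n]^k} S_{\bI_1} \cdots S_{\bI_m},
\]
and since $S$ has non-negative entries and $\cD_k \subset [n]^k$, subtracting the sub-sum over $\cD_k$ leaves a non-negative remainder.

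For the upper bound, write this remainder as $\sum_{\bI \in [n]^k \setminus \cD_k} S_{\bI_1} \cdots S_{\bI_m}$. A tuple lies in $[n]^k \setminus \cD_k$ iff there exist positions $1 \leq p < p' \leq k$ (among the $k = k_1 + \cdots + k_m$ coordinates of $\bI$) with $i_p = i_{p'}$. Non-negativity of the summands allows a union bound:
\[
\sum_{\bI \in [n]^k \setminus \cD_k} S_{\bI_1} \cdots S_{\bI_m} \leq \sum_{1 \leq p < p' \leq k} T_{p,p'},
\]
where $T_{p,p'}$ is the analogous sum with the single collision $i_p = i_{p'}$ imposed. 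The core claim is $T_{p,p'} \leq C/K_n$; since the number of pairs is $\binom{k}{2}$, the lemma follows.

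To bound $T_{p,p'}$, two cases arise. If $p, p'$ belong to the same block $\bI_j$, at cyclic positions $a \neq b$, imposing $i_{j,a} = i_{j,b} = i$ splits the $k_j$-cycle into two based loops of lengths $b - a$ and $k_j - (b - a)$ (both in $\{1, \ldots, k_j - 1\}$), and summing over the remaining free positions of $\bI_j$ yields $(S^{b-a})_{ii} (S^{k_j - (b-a)})_{ii}$. The other blocks, being untouched, contribute $\prod_{j'' \neq j} \tr S^{k_{j''}}$. By Lemma~\ref{trSk}, $\sum_i (S^{b-a})_{ii} (S^{k_j - (b-a)})_{ii} \leq \|S^{k_j - (b-a)}\|_\infty \tr S^{b-a} \leq C/K_n$, and the remaining trace product is $O(1)$. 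If instead $p, p'$ lie in distinct blocks $j \neq j'$, the analogous factorization gives $T_{p,p'} = \sum_i (S^{k_j})_{ii} (S^{k_{j'}})_{ii} \prod_{j'' \neq j, j'} \tr S^{k_{j''}}$, and the same two bounds of Lemma~\ref{trSk} yield $T_{p,p'} \leq C/K_n$. The main obstacle is really just the combinatorial bookkeeping of splitting a cycle at a repeated vertex into the correct product of loop traces; once that factorization is in place, Lemma~\ref{trSk} finishes the estimate immediately.
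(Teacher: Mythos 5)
Your proof is correct and follows essentially the same route as the paper: reduce the excess over $[n]^k\setminus\cD_k$ to sums with a single imposed coincidence (the paper does this by telescoping the product of the $\1_{i_j\neq i_\ell}$ constraints, which amounts to your union bound over pairs), then split the affected cycle(s) at the repeated vertex into diagonal entries of powers of $S$ and conclude with the two bounds of Lemma~\ref{trSk}. The cycle-splitting factorizations and the final estimates you give match the paper's computation, so no gap here.
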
 
\begin{proof} 
Observe first that $\tr S^{k_1} \ldots \tr S^{k_m} = \sum_{\bI \in [n]^k}
S_{\bI_1} \ldots S_{\bI_m}$. If $k = 1$, the result is trivial. Assume not.
The indicator function $\1_{\cD_k}(\bI)$ with $\bI = (i_1,\ldots, i_k)$ can be
encoded into the product of the $k(k-1)/2$ indicators of the type 

$\1_{i_j \neq
i_{\ell}}$ for $j \neq \ell \in [k]$. Let us order the constraints $i_j \neq
i_{\ell}$ in some way from $1$ to $k(k-1)/2$, and let us write 
$\1^{(m)}(\bI)$
as the product of the indicators on the first $m$ constraints, so that
$\1^{(k(k-1)/2)}(\bI) = \1_{\cD_k}(\bI)$.  Writing $\1^{(0)} \equiv 1$, we 
have  
\[
\sum_{\bI \in [n]^k} S_{\bI_1}\ldots S_{\bI_m} 
  - \sum_{\bI \in \cD_k}  S_{\bI_1}\ldots S_{\bI_m} = 
\sum_{m=0}^{k(k-1)/2-1} \sum_{\bI \in [n]^k} 
   (\1^{(m)}(\bI) - \1^{(m+1)}(\bI))  S_{\bI_1}\ldots S_{\bI_m} .  
\]
Write 
$\1^{(m)}(\bI) - \1^{(m+1)}(\bI) = \1^{(m)}(\bI) \1_{i_j = i_{\ell}}$
for some $j\neq\ell\in[k]$. Assuming, \emph{e.g.}, $k_1\geq 2$, $j=1$, and 
$\ell \leq k_1$, we obtain 
\begin{align*} 
 &\sum_{\bI \in [n]^k} (\1^{(m)}(\bI) - \1^{(m+1)}(\bI)) 
  S_{\bI_1} \ldots S_{\bI_m} 
 \leq \sum_{\bI \in [n]^k} \1_{i_1 = i_{\ell}} S_{\bI_1} \ldots S_{\bI_m} \\
 &= \sum_{i_1,\ldots,i_{\ell-1},i_{\ell+1},\ldots, i_{k_1}}
   s_{i_1i_2} \ldots s_{i_{\ell-1} i_1} \ 
  s_{i_1 i_{\ell+1}} \ldots s_{i_{k_1} i_1} \ \tr S^{k_2} \ldots \tr S^{k_m} \\
&\leq C \sum_{i_1} \begin{bmatrix} S^{\ell-1} \end{bmatrix}_{i_1i_1}  
  \begin{bmatrix} S^{k_1-\ell+1} \end{bmatrix}_{i_1i_1}  
\leq \frac{C}{K_n} \tr S^{k_1-\ell+1} \leq \frac{C}{K_n} 
\end{align*} 
by Lemma~\ref{trSk}. The cases where the indices $j$ and $\ell$ belong to two
different tuples $\bI_r$ are treated similarly. 
\end{proof}

\begin{proof}[Proof of Lemma \ref{Rk}]  
We deal with the expression 
\[
R_{k_1}^{s_1} R_{k_2}^{s_2} \ldots R_{k_m}^{s_m} = 
\sum_{(\bI_{1}, \ldots, \bI_{m}) \in \cD_{k_1} \times \cdots \times 
  \cD_{k_m}} X_{\bI_{1}}^{s_1} \ldots X_{\bI_{m}}^{s_m}. 
\]
We introduce some new notation. We let $k = k_1 + \cdots k_m$, and we write 
\[
\bI = (\bI_{1}, \ldots, \bI_{m}) = 
 \left( (i^1_1,\ldots, i^1_{k_1}), \ldots, (i^m_1,\ldots,i^m_{k_m})\right) 
  \in [n]^k. 
\]
In what follows, it is always meant that $i^\ell_{k_\ell+j} = i^\ell_j$ 
for $j=1,\ldots,k_\ell-1$. 

When $k = 1$ ($=m$), it is obvious that $\E R^{s_1}_1 = 0$, thus the lemma is 
true. Assume that $k > 1$, and define the two sets $\cA, \cB \subset 
 \cD_{k_1} \times \cdots \times \cD_{k_m}$ as 
\begin{align*}
\cA = \Bigl\{ \bI \in \cD_{k_1} \times \cdots \times \cD_{k_m}  \ : \   
   &\text{each couple } (i^\ell_j, i^\ell_{j+1}) 
      \text{ appears exactly twice in } \bI ,  \\ 
 &\text{each index } i^\ell_j  
  \text{ appears exactly twice in } \bI \Bigr\} , \\ 
\cB = \Bigl\{ \bI \in \cD_{k_1} \times \cdots \times \cD_{k_m} \ : \ 
   &\text{each couple } (i^\ell_j, i^\ell_{j+1}) 
      \text{ appears at least twice in } \bI ,  \\ 
 &\text{there exists an index } i^\ell_j  
  \text{ that
appears three times at least in } \bI \Bigr\}.  
\end{align*}
Since the elements of the matrix $X$ are centered, $\E X_{\bI_{1}}^{s_1} \ldots
X_{\bI_{m}}^{s_m}$ is equal to zero if there exists a couple $(i^\ell_j,
i^\ell_{j+1})$ that appears only once within $\bI$. This implies that 
\begin{equation}
\label{piR} 
\E R_{k_1}^{s_1} R_{k_2}^{s_2} \ldots R_{k_m}^{s_m} = 
\sum_{\bI \in \cA} \E X_{\bI_{1}}^{s_1} \ldots X_{\bI_{m}}^{s_m} 
 + 
\sum_{\bI \in \cB} \E X_{\bI_{1}}^{s_1} \ldots X_{\bI_{m}}^{s_m} 
\end{equation} 
We now show that 
\begin{equation}
\label{deg3}
\left| \sum_{\bI \in \cB} \E X_{\bI_{1}}^{s_1} \ldots
X_{\bI_{m}}^{s_m} \right| \leq \frac{C}{\sqrt{K_n}}. 
\end{equation}
There are $2^{k(k-1)/2}$ ways of constructing an indicator function on  $[n]^k$
defined as a product of indicators of the type $\1_{i^\ell_i = i^{\ell'}_{i'}}$
and indicators of the type $\1_{i^\ell_i \neq i^{\ell'}_{i'}}$, where this
product involves all the $k(k-1)/2$ sets of the type $\{(\ell,i), (\ell',i')
\}$ with cardinality $2$. There is a sub-set of these functions that completely 
describes the set $\cB$ in the sense that we can write
\[
\1_{\cB}(\bI) = \sum_{r=1}^{C_B} f_r(\bI)
\]
where the functions $f_r$ are chosen appropriately in the family that we just
defined, and where $C_B = C_B(k_1,\ldots, k_m)$ is the number of these 
functions. To establish~\eqref{deg3}, we show that for each $r \in [C_B]$, it 
holds that 
$\sum_{\bI \in [n]^k} | \E X_{\bI_{1}}^{s_1} \ldots X_{\bI_{m}}^{s_m} | 
  f_r(\bI) \leq C / \sqrt{K_n}$. 
Relying on the boundedness of the elements of $X$, we write 
\begin{equation} 
\label{SI} 
\sum_{\bI \in [n]^k} | \E X_{\bI_{1}}^{s_1} \ldots X_{\bI_{m}}^{s_m} | 
  f_r(\bI) 
\leq C \sum_{\bI \in [n]^k} \sqrt{S_{\bI_{1}} \ldots S_{\bI_{m}}}  f_r(\bI) . 
\end{equation} 
To deal with this expression, we need to introduce some new notations. Given
two $d$--tuples  $\bJ = (j_1,\ldots,j_d) \in [n]^d$ and $\ba = (a_1,\ldots,a_d)$
with $a_i \in \{ 1, 3/2, 2, 5/2, \ldots \}$, we write $|\bJ| = |\ba| = d$, 
\[
S_{\bJ}^{\ba} = s^{a_1}_{j_1,j_2} s^{a_2}_{j_2,j_3} \ldots 
  s^{a_{d}}_{j_{d-1},j_d} s^{a_{d}}_{j_d,j_1} 
\quad \text{and} \quad 
S_{\obJ}^{\ba} = 
  s^{a_1}_{j_1,j_2} s^{a_2}_{j_2,j_3} \ldots s^{a_{d}}_{j_{d-1},j_d} . 
\]
Of course, $S_{\bJ} = S_{\bJ}^{\bs 1_d}$. We also write 
$S_{\obJ} = S_{\obJ}^{\bs 1_d}$. 
With these notations, the right-hand side of~\eqref{SI} can be re-expressed as
follows.  By merging all the couples $(i^\ell_j, i^\ell_{j+1})$ that are forced
to be identical in the encoding by $f_r$, and keeping after a merger the couple
$(i^\ell_j, i^\ell_{j+1})$ with the smallest value of $\ell$, we can observe
after a possible index renumbering that there exists:
\begin{itemize}
\item An integer $p > 0$, tuples $\bJ_1, \ldots, \bJ_p$, and 
$\ba_1, \ldots, \ba_p$ such that $|\bJ_\ell | = | \ba_\ell | \in 
\{k_1,\ldots, k_m \}$ for $\ell\in[p]$, 
\item An integer $q \geq 0$, and, when $q > 0$, tuples
$\bJ_{p+1}, \ldots, \bJ_{p+q}$ and   
$\ba_{p+1}, \ldots, \ba_{p+q}$ with $|\bJ_{p+\ell} | = | \ba_{p+\ell} |$, 
\end{itemize} 
such that 
\begin{align}
&\sum_{\bI \in [n]^k} \sqrt{S_{\bI_{1}} \ldots S_{\bI_{m}}}  f_r(\bI) 
 \nonumber \\
 &=  \sum_{\bJ_1,\ldots,\bJ_p,\bJ_{p+1},\ldots,\bJ_{p+q}} 
 S^{\ba_1}_{\bJ_1} \cdots S^{\ba_p}_{\bJ_p} 
 S^{\ba_{p+1}}_{\obJ_{p+1}} \cdots S^{\ba_{p+q}}_{\obJ_{p+q}} 
 g(\bJ_1,\ldots,\bJ_{p+q}) :=  \chi, 
\label{err} 
\end{align} 
an expression that we now explain.  The function $g(\bJ_1,\ldots,\bJ_{p+q})$ is
a product of indicators that encodes the residual constraints after the merger
of the couples.  Let us see an initial tuple $\bI_\ell$ as constituting a
cycle $i^\ell_1 \to i^\ell_2 \to \cdots \to i^\ell_{k_\ell} \to i^\ell_1$.
After the merger, $\bI_1$ gives rise to $\bJ_1$. The cycle in $\bJ_1$ is not
broken when performing the sum in~\eqref{err}, since the couple $(i^\ell_j,
i^\ell_{j+1})$ with the smallest value of $\ell$ is kept after a merger.
Pursuing, the tuples $\bJ_2,\ldots,\bJ_p$ give rise to unbroken cycles in
the expression~\eqref{err}. The other $\bJ_{p+\ell}$'s, when they exist,
correspond to broken cycles. Let us consider $\bJ_{p+1}$. An important
feature of this tuple is that its extremities are connected to the
$\bJ_\ell$'s for $\ell \in [p]$ by the merger procedure. In other words,
writing in the remainder $\bJ_{\ell} = ( j^{\ell}_1, \ldots,
j^{\ell}_{|\bJ_{\ell}|})$,   there exists within the function $g$ a product of
indicators of the type $\1_{j^{p+1}_1 = \times} \1_{j^{p+1}_{|\bJ_{p+1}|} =
\times'}$ where the indices $\times$ and $\times'$ belong to the $\bJ_\ell$ for
$\ell\in[p]$. Pursuing this process, the extremities of the tuple
$\bJ_{p+q}$ are connected to the $\bJ_\ell$'s for $\ell \in [p+q-1]$.  

We now use these observations to bound $\chi$.  We shall repeatedly use the
bound $S^{\ba_\ell}_{\bJ_\ell} \leq C S_{\bJ_\ell}$ and
$S^{\ba_\ell}_{\obJ_\ell} \leq C S_{\obJ_\ell}$ due to $\| S \|_\infty \leq 1$
for all large $n$. According to the form of the function $f_r$, at least one of
the three following situations occurs:  
\begin{itemize}
\item $q > 0$. Assume for the sake of example that $q = 1$, and write
$j^{p+1}_1 = j^\ell_i$ and $j^{p+1}_{|\bJ_{p+1}|} = j^{\ell'}_{i'}$ where
$j^\ell_i$ and $j^{\ell'}_{i'}$ are found in $\bJ_1,\ldots, \bJ_p$. Using
Lemma~\ref{trSk}, we have
\begin{align*} 
\chi &\leq C \sum_{\bJ_1,\ldots,\bJ_p,\bJ_{p+1}} 
 S_{\bJ_1} \cdots S_{\bJ_p} 
 S_{\obJ_{p+1}} \1_{j^{p+1}_1 = j^\ell_i} 
   \1_{j^{p+1}_{|\bJ_{p+1}|} = j^{\ell'}_{i'}} 
  \\
 &=C \sum_{\bJ_1,\ldots,\bJ_p} 
 S_{\bJ_1} \cdots S_{\bJ_p} 
  \begin{bmatrix} S^{|\bJ_{p+1}|-1} \end{bmatrix}_{j^\ell_i j^{\ell'}_{i'}} \leq \frac{C}{K_n} \tr S^{|\bJ_1|} \ldots \tr S^{|\bJ_p|} 
 \leq \frac{C}{K_n}. 
\end{align*}
For general $q$, we get the bound $C/K_n^q$ by iterating this argument 
backwards, starting with $\bJ_{p+q}$. 

\item $q = 0$, and there exists an exponent within the $\ba_\ell$ that is 
 $\geq 3/2$. Here it is easy to observe that $\chi \leq C / \sqrt{K_n}$. 

\item $q = 0$ and all the vectors $\ba_\ell$ are made of ones. 
Since there is an index that appears at least three times in $f_r(\bI)$ by the 
definition of $\cB$, there is an index that appears at least two times 
in the $\bJ_\ell$'s. Say this index is $j^1_1$ with $j^1_1 = j^2_1$. Then, 
\begin{align*}
\chi &\leq \sum_{\bJ_1,\ldots,\bJ_p} 
 S_{\bJ_1} \cdots S_{\bJ_p} \1_{j^1_1 = j^2_1} 
 \leq C \sum_{\bJ_1, \bJ_2} S_{\bJ_1} S_{\bJ_2} \1_{j^1_1 = j^2_1}  \\ 
 &\leq C \sum_{j_1} \begin{bmatrix} S^{|\bJ_1|-1} \end{bmatrix}_{j_1j_1} 
 \begin{bmatrix} S^{|\bJ_2|-1} \end{bmatrix}_{j_1j_1}  
 \leq \frac{C}{K_n}. 
\end{align*}
\end{itemize}
This establishes Inequality~\eqref{deg3}. 

Getting back to~\eqref{piR}, we now deal with the term $\sum_{\bI \in \cA} \E
X_{\bI_{1}}^{s_1} \ldots X_{\bI_{m}}^{s_m}$.  Here, one can check that a
necessary condition for $\cA$ to be nonempty is that $m$ is even and the
tuples $\bI_1$, ..., $\bI_m$ can be grouped into pairs of equal length. 
In other words, the set $\bP$ of pair partitions $P$ of $[m]$ as specified in 
the statement of the lemma is not empty. The case being, we have
\[
\1_{\cA}(\bI) = \sum_{P\in\bP} h_P(\bI) 
 \prod_{\{\ell,\ell'\} \in P}  
 \left( \prod_{j=1}^{k_\ell} \1_{i^\ell_j = i^{\ell'}_j} 
 + \prod_{j=1}^{k_\ell} \1_{i^\ell_{j+1} = i^{\ell'}_j} + \cdots + 
  \prod_{j=1}^{k_\ell} \1_{i^\ell_{j+k_\ell-1} = i^{\ell'}_j} \right) , 
\]
where $h_P(\bI) \in \{0,1\}$ forces the indices $i^\ell_1, \ldots,
i^\ell_{k_\ell}$ within the pair $\{\ell,\ell'\}\in P$ to be different, and to
be different from the indices within all the other pairs. To better understand
the previous formula, let us see once again the tuples $\bI_\ell$ as
cycles. When $\{ \ell, \ell' \} \in P$, we need to make the cycles
associated to $\bI_\ell$ and $\bI_{\ell'}$ coincide, and there are $k_\ell$
ways to do this. This corresponds to the sum of products within the parenthesis
of the last display. 

With this identity, we have 
\begin{align*} 
& \sum_{\bI \in \cA} \E X_{\bI_{1}}^{s_1} \ldots X_{\bI_{m}}^{s_m} \\ 
&= \sum_{P\in\bP} \sum_{\bI\in[n]^k} 
h_P(\bI) 
 \prod_{\{\ell,\ell'\} \in P}  
 \left( \prod_{j=1}^{k_\ell} \1_{i^\ell_j = i^{\ell'}_j} 
 + \prod_{j=1}^{k_\ell} \1_{i^\ell_{j+1} = i^{\ell'}_j} + \cdots + 
  \prod_{j=1}^{k_\ell} \1_{i^\ell_{j+k_\ell-1} = i^{\ell'}_j} \right) 
\E X_{\bI_{1}}^{s_1} \ldots X_{\bI_{m}}^{s_m} \\
&= \sum_{P\in\bP} \left( \prod_{\{\ell,\ell'\} \in P} 
   k_\ell \left( \E W_{11}^{s_\ell} W_{11}^{s_{\ell'}} \right)^{k_\ell} \right) 
\sum_{\neq} \prod_{\{\ell,\ell'\} \in P} S_{\bI_\ell} , 
\end{align*}
where $\sum_{\neq}$ is the sum over all the $m/2$ tuples $\bI_\ell$ such
that $\{ \ell,\ell'\} \in P$, with the constraint that all the indices that
belong to these tuples are different. Thanks to Lemma~\ref{errS}, we 
obtain that 
\[
\sum_{\bI \in \cA} \E X_{\bI_{1}}^{s_1} \ldots X_{\bI_{m}}^{s_m} \\ 
= \sum_{P\in\bP} \prod_{\{\ell,\ell'\} \in P} 
  \left( k_\ell \left( \E W_{11}^{s_\ell} W_{11}^{s_{\ell'}} \right)^{k_\ell} 
 \tr S^{k_\ell}  \right)  + \varepsilon , 
\] 
where $|\varepsilon| \leq C / K_n$. Recalling~\eqref{deg3}, our lemma is 
proven.
\end{proof} 

\begin{proof}[Proof of Lemma~\ref{Qk}]  

We first evaluate the asymptotics of $\E Q_k$. Assuming $k$ is even, let us
focus on the case where the indices of $\bI = (i_1,\ldots, i_k)$ satisfy the
constraints $i_j = i_{k/2+j}$ for all $j\in[k/2]$ and $|\{ i_1, \ldots,
i_{k/2} \}| = k/2$, generating the double cycle $i_1 \to \cdots i_{k/2} \to i_1
\to \cdots i_{k/2} \to i_1$. The expectation of the sum over the $\bI$ with
these constraints is $\sum_{\bJ \in \cD_{k/2}} \left(\E W_{11}^2 \right)^{k/2}
S_{\bJ} = \left(\E W_{11}^2 \right)^{k/2} \tr S^{k/2} + \cO(1/K_n) = \bm_k +
\cO(1/K_n)$ thanks to Lemma~\ref{errS}. When they exist, all other possibly
non-zero contributions to $\E Q_k$, including $k$ being odd, correspond to the
couples $(i_j, i_{j+1})$ appearing two times at least and one index appearing
three times at least. By treating these cases similarly to what we did for the
term $\sum_{\bI \in \cB}\cdots$ in the previous lemma, we can show that these
cases are bounded by $C / \sqrt{K_n}$.  We thus have 
\[
\E Q_k - \bm_k \xrightarrow[n\to\infty]{} 0 .
\] 
To establish the result of the lemma, we now show that the variance 
$\var(Q_k)$ of $Q_k$ converges to zero. Write 
\[
\var Q_k = \sum_{\bI_1 = (i_1^1,\ldots,i_k^1), \bI_2 = (i_1^2,\ldots,i_k^2) 
  \in [n]^k\setminus\cD_k} 
  \E (X_{\bI_1} - \E X_{\bI_1})(X_{\bI_2} - \E X_{\bI_2}) . 
\] 
We observe here that 
\[
\left| \E (X_{\bI_1} - \E X_{\bI_1})(X_{\bI_2} - \E X_{\bI_2}) \right| 
 \leq C \sqrt{s_{i^1_1 i^1_2} \ldots s_{i^1_k i^1_1}} 
  \sqrt{s_{i^2_1 i^2_2} \ldots s_{i^2_k i^2_1}}. 
\]
Moreover, the left hand side is equal to zero unless every couple $(i^\ell_{j},
i^\ell_{j+1})$ appears at least twice in $\bI = (\bI_1, \bI_2)$, and at least
one of these couples is common to $\bI_1$ and $\bI_2$.  In a manner similar to
what we did in the proof of the previous lemma, these constraints can be
encoded into functions that have the form of products of the type $\1_{i^\ell_j
= i^{\ell'}_{j'}}$. The number of such functions does not depend on $n$. Fixing
one of these functions, we merge the identical couples within $\bI_1$ and
within $\bI_2$, keeping the lowest indices, and then we merge what remains in
$\bI_1$ and $\bI_2$, keeping the lowest exponent.  By doing so, we get an
expression similar to \eqref{err} in the proof of the previous lemma. Re-using
the notations of that proof and repeating the argument there, the case where 
$q > 0$ and the case $q = 0$ with an exponent $\geq 3/2$ have negligible
contributions.  Let us deal with the case where $q = 0$ and where all the
exponents are equal to $1$.  In this case, each couple $(i^\ell_j,
i^\ell_{j+1})$ appears exactly twice, and we recall that there is one couple
common to $\bI_1$ and $\bI_2$.  Also observe that since $\bI_1, \bI_2 \in
[n]^k\setminus\cD_k$, there is at least one index repetition within $\bI_1$ and
$\bI_2$. In these conditions, one can check that the only available
possibilities are of the form $\sum_{\bJ_1,\bJ_2} S_{\bJ_1} S_{\bJ_2}
\1_\times$, where $\1_\times$ links an index in $\bJ_1$ to an index in $\bJ_2$.
This leads to a negligible contribution. Lemma~\ref{Qk} is proven. 
\end{proof}

\subsection{Proof of Theorem~\ref{th-chpol} when $W_{11}$ is bounded} 
We begin by establishing the properties of the functions $\kappa_n$ and $F_n$
provided in the statement of Theorem~\ref{th-chpol}.  Recalling that $\limsup
\rho(S^{(n)}) \leq 1$, the function $\kappa_n$ is well-defined as an element of
$\HHd$ for all large $n$. Moreover, using Assumption~~\eqref{bnd-det} and
recalling the development~\eqref{dev-det}, we obtain that for all small 
$\varepsilon > 0$,  
\[
\max_{\gamma\in[0,1-\varepsilon]} \sum_{k=1}^\infty
  \gamma^k \frac{\tr (S^{(n)})^k}{k}  < \infty. 
\]
Therefore, for each compact $\cK \subset D(0,1)$, it holds that 
\begin{align*}
\limsup_n\max_{z\in\cK} 
 \left| \log\det( I_n - z^2 \E W_{11}^2 S^{(n)}) \right| &= 
\limsup_n\max_{z\in\cK} 
 \left| \sum_{k=1}^\infty (z^2 \E W_{11}^2)^k \frac{\tr (S^{(n)})^k}{k} \right|
  \\ 
 &\leq \limsup_n\max_{z\in\cK} 
\sum_{k=1}^\infty |z|^{2k} \frac{\tr (S^{(n)})^k}{k} 
 < \infty, 
\end{align*} 
and the bounds in~\eqref{bnd-kappa} hold true. Regarding $F_n(z)$, we can 
check by, \emph{e.g.}, a moment calculation that for each $n >0$, it holds that
\[
\limsup_{k\to\infty} \frac{|Z_k|^{1/k} (\tr (S^{(n)})^k)^{1/(2k)}}{k^{1/(2k)}} 
  \leq \rho(S^{(n)})^{1/2}  \quad \text{w.p. } 1 
\]
therefore, $F_n(z)$ is well-defined as a $\HHd$--valued random variable for
all large $n$. Moreover, we can see that
$\E|F_n(z)|^2 = \sum_{k\geq 1} |z|^{2k} \tr (S^{(n)})^k / k 
  = - \log\det(1 - |z|^2 S^{(n)})$ which is upper bounded on the compacts of 
$D(0,1)$.  Therefore $(F_n)$ is tight. 

As a consequence, the function $g_n(z) = \kappa_n(z) \exp(-F_n(z))$ is a
well-defined $\HHd$-valued random variable, and the sequence $(g_n)$ is tight.
Write $g_n(z) = 1 + \sum_{k\geq 1} G^{(n)}_k (-z)^k$, and recall that $q_n(z) =
1 + \sum_{k=1}^n P^{(n)}_k (-z)^k$. We need to show that for each fixed integer
$k > 0$, the asymptotic equivalence 
\begin{equation}
\label{P-G} 
(P^{(n)}_1,\ldots, P^{(n)}_k) \sim_n (G^{(n)}_1,\ldots, G^{(n)}_k) 
\end{equation} 
holds true. By applying Propositions~\ref{tight_seq} and~\ref{propcvg}, 
Theorem~\ref{th-chpol} will then be proven. 

Recalling the discussion that precedes Proposition~\ref{fin-dim}, we can 
write $(P^{(n)}_1,\ldots, P^{(n)}_k) = Q(\tr X^{(n)}, \ldots, \tr (X^{(n)})^k)$
for some polynomial $Q$ independent of $n$. Notice also that $g_n$ can be 
written as 
\[
g_n(z) = \exp\left( 
  - \sum_{k=1}^{\infty} z^k \left( Z_k \sqrt{\frac{\tr (S^{(n)})^k}{k}} 
    + \frac{\bm^{(n)}_k}{k} \right) \right). 
\] 
Therefore, by applying the same argument as for $q_n$, we obtain that
\[
(G^{(n)}_1,\ldots, G^{(n)}_k) = Q\Bigl( Z_1 \sqrt{\tr S^{(n)}} + \bm^{(n)}_1, 
  \ldots,  Z_k \sqrt{k \tr (S^{(n)})^k} + \bm^{(n)}_k \Bigr), 
\]
for the same polynomial $Q$, and \eqref{P-G} follows from 
Proposition~\ref{fin-dim}. 

\subsection{Releasing the boundedness assumption on $W_{11}$. End of the proof 
 of Theorem~\ref{th-chpol}} 

To finish the proof of Theorem~\ref{th-chpol}, all what remains to prove is the
truth of the asymptotic equivalence~\eqref{P-G} when $W_{11}$ has a second
moment without any additional assumption. As
in~\cite{bordenave2021convergence}, we truncate the $W_{ij}$'s by writing 
$W_{ij}^{(M)} = \1_{|W_{i,j}|\leq M}W_{i,j} -\E \1_{|W_{i,j}|\leq M}W_{i,j}$
for some $M > 0$, and we show that the truncation error is negligible when $M$
is large. 
One of the ideas of~\cite{bordenave2021convergence} is that it is 
much easier to control the effect of this truncation on the coefficients 
$P^{(n)}_k$ rather than on the traces $\tr (X^{(n)})^k$, as it is frequently 
done in random matrix theory. 
\begin{lem}
\label{P-PM} 
For $M > 0$, let $W_{ij}^{(M)}$ be defined as 
\[
W_{ij}^{(M)} = 
  \1_{|W_{i,j}|\leq M}W_{i,j} -\E \1_{|W_{i,j}|\leq M}W_{i,j}. 
\]
Define the matrix 
  $X^{(n,M)} = \begin{bmatrix} X_{i,j}^{(n,M)} \end{bmatrix}_{i,j\in[n]}$ as  
$X_{i,j}^{(n,M)}:=\sqrt{s_{i,j}^{(n)}}W^{(M)}_{i,j}$ for $i,j\in[n]$. 
For $k\in [n]$, let $P_{k}^{(n,M)}$ be given as 
\[
P_{k}^{(n,M)}= \sum_{I \subset [n]: |I|=k} \det X^{(n,M)}_I 
          \quad \text{for} \ k \in [n]. 
\]
Then, for each fixed integer $k > 0$, the bound 
\[
\sup_n \E|P_{k}^{(n)}-P_{k}^{(n,M)}|^2 \leq \varepsilon_M  
\]
holds true, with $\varepsilon_M \to 0$ as $M\to\infty$. 
\end{lem}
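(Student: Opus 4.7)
The plan is to exploit, exactly as in the proof of Proposition~\ref{tight_seq}, the $L^2$ orthogonality of the family $\{\det X_I\}_I$ indexed by subsets $I \subset [n]$ of a given cardinality. Since both $W_{ij}$ and $W^{(M)}_{ij}$ are centered and the different entries are independent, the argument giving $\E[\det X_I \overline{\det X_J}] = 0$ for $I \neq J$ works equally well for mixed expressions involving either matrix, so
\[
\E \bigl| P_k^{(n)} - P_k^{(n,M)} \bigr|^2
= \sum_{I \subset [n],\, |I|=k}
 \E \bigl| \det X_I^{(n)} - \det X_I^{(n,M)} \bigr|^2.
\]
The task then splits into two steps: bounding each summand, and summing over $I$ uniformly in $n$.

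For the first step, set $c_M = \E[|W_{11}|^2 \1_{|W_{11}|\leq M}]$ and $v_M = \E|W^{(M)}_{11}|^2$, and expand each determinant as a signed sum over permutations of $I$. The familiar calculation yields $\E|\det X_I^{(n)}|^2 = \perm S_I$ and $\E|\det X_I^{(n,M)}|^2 = v_M^k \perm S_I$. For the cross term $\E[\det X_I^{(n)} \overline{\det X_I^{(n,M)}}]$, the centering of $W^{(M)}$ forces any configuration in which the permutations $\sigma, \tau$ of $I$ produce unmatched index pairs to contribute zero; only the diagonal $\sigma = \tau$ survives, and each of the $k$ matching factors produces an $\E[W_{11} \overline{W^{(M)}_{11}}] = c_M$. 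Hence
\[
\E \bigl| \det X_I^{(n)} - \det X_I^{(n,M)} \bigr|^2
 = (1 + v_M^k - 2 c_M^k)\, \perm S_I.
\]

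For the second step, expanding each diagonal entry of $I_n + t S^{(n)}$ gives the identity $\perm(I_n + t S^{(n)}) = \sum_{j=0}^n t^j \sum_{|I|=j} \perm S_I^{(n)}$. Applying Lemma~\ref{perm} at $t = 1/2$ then yields $\sup_n \sum_{|I|=k} \perm S_I^{(n)} \leq 2^k \sup_n \perm(I + S^{(n)}/2) < \infty$. Since $c_M \to 1$ by dominated convergence and $v_M = c_M - |\E[\1_{|W_{11}|>M} W_{11}]|^2 \to 1$ (the second term vanishes by Cauchy--Schwarz), we have $1 + v_M^k - 2 c_M^k \to 0$ as $M \to \infty$, from which the lemma follows with an $\varepsilon_M$ depending on $k$.

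The computation is essentially routine; the only point requiring care is that $W_{ij}$ and $W^{(M)}_{ij}$ are \emph{not} independent, since the latter is a deterministic function of the former, so the cross moment $\E[W_{ij} \overline{W^{(M)}_{ij}}]$ must be evaluated directly rather than by factoring as a product over independent variables. The centering of $W^{(M)}$ nevertheless produces the clean value $c_M$, which is exactly what makes the difference $\det X_I^{(n)} - \det X_I^{(n,M)}$ collapse to the single scalar factor $(1 + v_M^k - 2 c_M^k)$ multiplying $\perm S_I$.
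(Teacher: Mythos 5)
Your proof is correct and follows essentially the same route as the paper: orthogonality of the determinant differences across subsets $I$ and across permutations, the identity $\sum_{|I|=k}\perm S^{(n)}_I = p^{(n)}_k$ bounded uniformly in $n$ via Lemma~\ref{perm} evaluated at $1/2$, and a scalar factor vanishing as $M\to\infty$. Indeed your factor $1+v_M^k-2c_M^k$ is exactly the paper's $\E\bigl|W_{1,1}\cdots W_{1,k}-W^{(M)}_{1,1}\cdots W^{(M)}_{1,k}\bigr|^2$ written out by independence, so the two arguments coincide.
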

\begin{proof}
Recalling that the polynomial $p^{(n)}(w) = \perm\left( I + w S^{(n)} \right)$
introduced in the proof of Lemma~\ref{perm} can be written as $p^{(n)}(w) = 1 +
\sum_{k=1}^n p^{(n)}_k w^k$ with $$p^{(n)}_k = \sum_{I\subset[n], |I|= k} \perm
S^{(n)}_I$$ for $k\in[n]$, we write 
\begin{align*}
\E|P_{k}^{(n)}-P_{k}^{(n,M)}|^2 &= 
\sum_{I \subset [n]:|I|=k} 
             \E \left| \det X^{(n)}_I - \det X^{(n,M)}_I \right|^2 \\
 &= \sum_{I \subset [n]:|I|=k} \sum_{\sigma\in\mathfrak S_I} 
  \E \left| \prod_{i\in I} X_{i,\sigma(i)}^{(n)} 
  -  \prod_{i\in I} X_{i,\sigma(i)}^{(n,M)} \right|^2 \\ 
 &= \E \left| W_{1,1}\cdots W_{1,k} 
  - W_{1,1}^{(M)} \cdots W_{1,k}^{(M)} \right|^2 
  \sum_{I \subset [n]:|I|=k} \sum_{\sigma\in\mathfrak S_I} 
   \prod_{i\in I} s_{i,\sigma(i)}  \\
&= \E \left| W_{1,1}\cdots W_{1,k} -
    W_{1,1}^{(M)} \cdots W_{1,k}^{(M)} \right|^2 p^{(n)}_k. 
\end{align*} 
Observing that 
\[
\sup_n p^{(n)}_k \leq \frac{\sup_n p^{(n)}(1/2)}{(1/2)^k} < \infty 
\]
by Lemma~\ref{perm}, we can clearly choose $\varepsilon_M$ as 
\[
\varepsilon_{M} = \E \left| W_{1,1}\cdots W_{1,k} -
    W_{1,1}^{(M)} \cdots W_{1,k}^{(M)} \right|^2 \sup_n p^{(n)}_k  . 
\]
\end{proof}

Fix an integer $k > 0$, and let $\varphi : \C^k \to \R$ be a Lipschitz and
bounded function.  Define the sequence of independent Gaussian random variables
$(Z^{(M)}_\ell)_{\ell\geq 1}$ as $\E Z_\ell^{(M)} =0$, $\E |Z_\ell^{(M)} |^2=
\E |W_{11}^{(M)} |^2$, and $\E[(Z_\ell^{(M)})^2] =
\E[(W^{(M)}_{11})^2]^{\ell}$.  By a slight modification of the proof of
Proposition~\ref{fin-dim}, we obtain an asymptotic equivalence similar
to~\eqref{as-eq}, namely, for $M > 0$ large enough, 
\[
\left( \tr X^{(n,M)}, \ldots, \tr (X^{(n,M)})^{k} \right) \sim_n  
\left( \sqrt{\tr S^{(n)}} Z_{1}^{(M)} + \bm_{1}^{(n,M)}, \ldots, 
 \sqrt{k \tr (S^{(n)})^{k}} Z_{k}^{(M)} + \bm_{k}^{(n,M)} \right). 
\] 
where $\bm^{(n,M)}_k$ has the same expression as $\bm_k^{(n)}$ with 
$W_{11}$ being replaced with $W_{11}^{(M)}$. As in the last sub-section, we 
have $(P^{(n,M)}_1,\ldots, P^{(n,M)}_k) = 
  Q(\tr X^{(n,M)}, \ldots, \tr (X^{(n,M)})^k)$, where $Q$ is a polynomial 
independent of $n$. Therefore the quantity 
\[
 \E\varphi(P^{(n,M)}_1,\ldots, P^{(n,M)}_k) 
  -  \E\varphi\circ Q\Bigl(
 \sqrt{\tr S^{(n)}} Z_{1}^{(M)} + \bm_{1}^{(n,M)}, \ldots, 
 \sqrt{k \tr (S^{(n)})^{k}} Z_{k}^{(M)} + \bm_{k}^{(n,M)} \Bigr).
\]
tends to $0$ as $n$ tends to $\infty.$

We also know that 
\[
\E\varphi(G^{(n)}_1,\ldots, G^{(n)}_k) = 
 \E \varphi\circ Q\Bigl( Z_1 \sqrt{\tr S^{(n)}} + \bm^{(n)}_1, 
  \ldots,  Z_k \sqrt{\frac{\tr (S^{(n)})^k}{k}} + \bm^{(n)}_k \Bigr). 
\]
Now, by the previous lemma, it holds that 
\[
\sup_n \left| \E\varphi(P^{(n,M)}_1,\ldots, P^{(n,M)}_k) 
 - \E\varphi(P^{(n)}_1,\ldots, P^{(n)}_k) \right|  
   \xrightarrow[M\to\infty]{} 0 .
\]
Moreover, using that the traces $\tr (S^{(n)})^\ell$ are bounded by numbers 
independent of $n$, we also have 
\begin{multline*} 
\sup_n \left| 
 \E \varphi\circ Q\Bigl( \sqrt{\tr S^{(n)}} Z_1 + \bm^{(n)}_1, 
  \ldots,  \sqrt{k \tr (S^{(n)})^k} Z_k + \bm^{(n)}_k\Bigr) \right. \\ 
 \left.  - \E \varphi\circ Q\Bigl( 
  \sqrt{\tr S^{(n)}} Z_1^{(M)} + \bm^{(n,M)}_1, 
  \ldots,  \sqrt{k \tr (S^{(n)})^k} Z_k^{(M)} + \bm^{(n,M)}_k\Bigr) 
\right| 
  \xrightarrow[M\to\infty]{} 0 .
\end{multline*} 
It results that 
\[
 \E\varphi(P^{(n)}_1,\ldots, P^{(n)}_k) 
 - \E\varphi(G^{(n)}_1,\ldots, G^{(n)}_k)  \xrightarrow[n\to\infty]{} 0 , 
\] 
and the asymptotic equivalence~\eqref{P-G} is established when $W_{11}$ 
satisfies our general assumptions. This concludes the proof of 
Theorem~\ref{th-chpol}.

\subsection{Proof of Theorem~\ref{th-main} from Theorem~\ref{th-chpol}}
\label{subs_proof_of_spectral_radius} 

Denoting as $\overline D(0,r)$ the closed centered disk of $\C$ of radius $r$,
the probability event $[ \rho(X^{(n)}) \geq 1 + \varepsilon ]$ satisfies 
\[
\left[ \rho(X^{(n)}) > 1 + \varepsilon \right] = 
 \left[ \min_{z\in \overline D(0,r)} | q_n(z) | = 0 \right], 
\]
where $r = 1 / (1+\varepsilon)$. Fix $\delta\in(0,1-r)$. The function 
$\HHd \to [0,\infty), f \mapsto \min_{z\in \overline D(0,r)} | f(z) |$ is 
continuous. Therefore, recalling the notation $g_n = \kappa_n \exp(-F_n)$, we 
obtain from Theorem~\ref{th-chpol} that 
\begin{equation}
\label{eq-min} 
\min_{z\in \overline D(0,r)} | q_n(z) | \sim_n 
\min_{z\in \overline D(0,r)} | g_n(z) |. 
\end{equation} 
By Theorem~\ref{th-chpol}, the deterministic sequence $(\kappa_n)$ is 
precompact in $\HHd$ by the normal family theorem, and the random 
$\HHd$--valued sequence $(F_n)$ is tight. Take a sub-sequence, call it $(n)$, 
along which $(g_n)$ converges in distribution towards 
$g_\infty = \kappa_\infty \exp(-F_\infty)$, where $\kappa_\infty \in \HHd$ and 
$F_\infty$ is a $\HHd$--valued random variable. From~\eqref{eq-min}, we obtain 
that $\min_{z\in \overline D(0,r)} | q_n(z) |$ converges in distribution 
to $\min_{z\in \overline D(0,r)} | g_\infty(z) |$ along the sub-sequence $(n)$.
But $g_\infty(z) = 0$ if and only if $\kappa_\infty(z) = 0$, and furthermore, 
the equation $\kappa_\infty(z) = 0$ has no solution on $D(0,1-\delta)$ 
by~\eqref{bnd-kappa}. Therefore,
\[
\lim_n \PP\left[ \min_{z\in \overline D(0,r)} | q_n(z) | > 0 \right] = 1, 
\]
and Theorem~\ref{th-main} is proven.

\section{Proofs for Section~\ref{sec-app}} 
\label{prf-cases} 

\subsection{Proof of Proposition~\ref{TC}} 
Let $\log$ be the branch of the logarithm on the open disk $D(1,1)$ such that
$\log(1) = 0$, and observe that 
$|\log(1+z)| = | \sum_{i=1}^\infty z^i / i | \leq |z| / (1-|z|)$ for $|z| < 1$.
Denoting as $\{ \lambda_1, \lambda_2, \ldots \}$ the (eigenvalue) spectrum of 
the compact operator $\bS$, and recalling that $\bS$ is trace-class with 
$\rho(\bS) \leq 1$, it holds that 
\[
\forall \gamma\in [0,1), \quad 
\sum_{k=1}^\infty \left|  \log ( 1 - \gamma \lambda_k ) \right|  
  \leq  \sum_{k=1}^\infty \frac{\gamma |\lambda_k|}{1- \gamma|\lambda_k|} 
 \leq \frac{\gamma}{1-\gamma} \sum_{k=1}^\infty |\lambda_k| < \infty. 
\] 
Therefore, the Fredholm determinant $\det( \bI - \gamma \bS)$ can be 
written for $\gamma\in[0,1)$ as 
\[
\det( \bI - \gamma \bS) 
= \prod_{k=1}^\infty \left( 1 - \gamma \lambda_k\right), 
\] 
and satisfies $\min_{\gamma\in[0,1-\varepsilon]}\det( \bI - \gamma \bS) > 0$
for $\varepsilon\in(0,1]$. 

We can also express $\det( \bI - z \bS)$ as an analytic expansion for $z\in\C$. 
Namely, consider the series 
\[
 f(z) = 1 + \sum_{k=1}^\infty (-1)^k \bd_k z^k, \quad z \in \C, 
\]
where 
\[ 
\bd_k = \frac{1}{k !} 
 \int_0^1 \int_0^1 \cdots \int_0^1 
 \det \bS\begin{pmatrix} x_1 & x_2 & \cdots x_k \\  
  x_1 & x_2 & \cdots x_k \end{pmatrix} dx_1 dx_2 \ldots dx_k . 
\]
Since, by Hadamard's inequality, 
 $| \bd_k | \leq k^{k/2} \| \bS \|_\infty^k / k!$, the
function $f$ is entire, and it is well-known to coincide with $\det( \bI - z
\bS)$. Now, as in the proof of Lemma~\ref{perm}, we interpret the matrix
$S^{(n)}$ as a piece-wise constant approximation $\bS^{(n)} : [0,1]^2 \to \R_+$
of the operator $\bS$, by writing 
$\bS^{(n)}(x,y) = n s^{(n)}_{ij} = \bS(i/n, j/n)$ when $(x,y) \in \left[
\frac{i-1}{n}, \frac in \right) \times \left[ \frac{j-1}{n}, \frac jn \right),
i,j \in[n]$ (completions on the right and upper borders irrelevant). With this,
we have 
\[
\det\left( I - z S^{(n)}\right) = \det\left(\bI - z \bS^{(n)}\right) = 
  1 + \sum_{k=1}^n (-1)^k \bd_k^{(n)} z^k
\]
where 
\[ 
\bd^{(n)}_k = \sum_{I\subset[n], |I|= k} \det S^{(n)}_I 
 = \frac{1}{k !} 
 \int_0^1 \int_0^1 \cdots \int_0^1 
 \det \bS^{(n)} \begin{pmatrix} x_1 & x_2 & \cdots x_k \\  
  x_1 & x_2 & \cdots x_k \end{pmatrix} dx_1 dx_2 \ldots dx_k . 
\]
Similarly to $\bd_k$, it holds that 
$|\bd^{(n)}_k | \leq \| \bS \|_\infty^k / k !$. Furthermore, 
$\bd^{(n)}_k \to_n \bd_k$ for each $k$ thanks to the continuity of the kernel
$\bS$. With this at hand, one can check that the sequence of polynomials 
$\left( \det\left( I - z S^{(n)}\right) \right)_n$ is bounded on the compacts
of $\C$ and converges point-wise to $\det\left(\bI - z \bS\right)$. 
Thus, this convergence is uniform on the compacts of $\C$, and 
Proposition~\ref{TC} follows. 

\subsection{Proof of Proposition~\ref{ER}}

Write $S^{(n)} = [ S^{(n)}_{ij} ]_{i,j=1}^n$. For $i\in[n]$, define $b_i^{(n)}$ 
as 
\[
b_i^{(n)} = 
\sum_{j=1}^n \E S^{(n)}_{ij} = 
\frac 1n \sum_{j=1}^n \bS(i/n,j/n) \leq \|\bS\|_\infty. 
\]
By Chernoff's theorem \cite[Th.~2.3.1]{vershynin2018high}, it holds that 
\[
\PP\left[ \sum_{j=1}^n S^{(n)}_{ij} \geq t \right] = 
\PP\left[ \sum_{j=1}^n B^{(n)}_{ij} \geq t K_n \right] 
 \leq \left(\frac{e b_i^{(n)}}{t} \right)^{t K_n} 
 \leq \left(\frac{e \|\bS\|_\infty}{t} \right)^{t K_n} 
\] 
for $t > 0$ large enough. By the union bound, we therefore have 
\[
\PP\left[ \vertiii{{S}^{(n)}} \geq t \right] 
 \leq \exp\left(\log n + t K_n \log( e \|\bS\|_\infty / t ) \right) .
\]
Using that $K_n \geq \log n$, choosing $t$ large enough and invoking the 
Borel-Cantelli lemma, we obtain the first assertion of Proposition~\ref{ER}. 

To prove the second assertion, we work on the reverse characteristic 
polynomials  
\[
q_n^S(z) = \det(I_n - z S^{(n)}). 
\]
Following the general canvas of the proof of Theorem~\ref{th-chpol}, we
consider $(q^S_n)$ as a sequence of $\HH$--valued random variables. We first 
establish the tightness of this sequence. Second, 
we show that for each fixed integer $k > 0$, it holds that 
\begin{equation} 
\label{S-barS}  
\tr (S^{(n)})^k - \tr (\uS^{(n)})^k \toprobalong 0
\end{equation} 
By an obvious continuity argument, we further know that 
$\tr (\uS^{(n)})^k \to_n \tr \bS^k$, where $\tr \bS^k$ is given 
by~\eqref{trop}. As a consequence, 
\[
\tr (S^{(n)})^k \toprobalong \tr \bS^k. 
\]
The tightness and these convergences for each $k > 0$ lead to the convergence 
$q_n^S(z) \toprobashort \det(\bI - z \bS)$ in $\HH$ following the approach
developed in the proof of Theorem~\ref{th-chpol}, and the 
convergence~\eqref{q->S} follows. 

To establish the tightness of $(q_n^S)$, we write as in \eqref{q-P}
\[
q_n^S(z)= 1 + \sum_{k=1}^{n} (-z)^k 
  \sum_{I \subset [n], |I| = k} \det S^{(n)}_I . 
\]
By the triangle inequality,
\[
\E |q_n^S(z)| 
\leq 1 + \sum_{k=1}^{n} |z|^k \frac{1}{K_n^k}
\sum_{I \subset [n], |I| = k} 
 \sum_{\sigma \in \mathfrak{S}_I} 
 \E \prod_{i \in I} B^{(n)}_{i, \sigma(i)} = \perm(I_n + |z| \uS^{(n)}).
\]
Remembering that $\min_{\gamma\in[0,1-\varepsilon]} \det (\bI - \gamma\bS) > 0$
and using an argument similar to the proof of Lemma~\ref{perm}, we obtain 
that $\perm(I_n + |z| \uS^{(n)})$ is uniformly bounded on the compacts of
$D(0,1)$. By Proposition~\ref{cond-tight}, we obtain that $(q_n^S)$ is 
tight.  

It remains to establish the convergence~\eqref{S-barS} to finish the proof. 

In what follows, let \( \mathbf{P}_k \) denote the isomorphic classes (in the
classical graph-theoretic sense) of closed walks which use \( k \) edges. We
allow the walks to have multiple edges and loops.  For each \( P \in
\mathbf{P}_k \) and $n \geq k$, we shall denote \( P([n]) \) the set of walks
belonging to the isomorphic class of \( P \) and having vertices in \( [n] \).
Given a matrix $M = [M_{ij}]$, recall the notation 
$M_{\bJ} = M_{i_1i_2} \ldots M_{i_k i_1}$ for $\bJ = (i_1,\ldots,i_k)$. We
start by writing  
\[
\E \tr(S^{(n)})^k = \frac{1}{K_n^k} 
\sum_{P \in \mathbf{P}_k} \sum_{\bJ \in P([n])} \E B^{(n)}_{\bJ}. 
\]
Now, for any walk \( P \in \mathbf{P}_k \), we have:
\[
\frac{1}{K_n^k} \sum_{\bJ \in P([n])} \E B^{(n)}_{\bJ} 
 \leq \frac{C}{K_n^k} n^{| \{ \text{vertices of } P \}|}  
 \left( \frac{K_n}{n} \right)^{| \{ \text{distinct edges of } P\}| }, 
\]
where the \( |\{ \text{distinct edges of } P\}| \) counts each edge of 
\( P \) once, ignoring repetitions.

Furthermore, the number of vertices of \( P \) is bounded by the number of
distinct edges of \( P \). This is true because we can start a closed walk in
\( P \) and assign each distinct edge to a vertex, starting from the first edge
during the walk that starts from that vertex.
Since the number of distinct edges of \( P \) is clearly bounded by \( k \), we
conclude that for a walk \( P \) to have a non-negligible contribution in the
above sum, the number of distinct edges should equal the number of vertices.
Thus, the non-negligible paths \( P \) should satisfy \( k = \text{number of
vertices of } P = \text{number of distinct edges of } P \).
Thus, denoting as $\cD_k$ the sub-set of $[n]^k$ defined as 
\[
\cD_k = \{ (i_1, \dots, i_k) \in [n]^k : 
   \forall l \neq j \in [k], i_l \neq i_j \}, 
\]
we obtain that 
\[
\E \tr(S^{(n)})^k = \sum_{\bJ \in \cD_k} \uS^{(n)}_{\bJ} + o_n(1). 
\]
Now, we know that the matrix $\uS^{(n)}$ complies with 
Assumptions~\ref{bnd-S} and~\ref{bnd-det} by replacing the $K_n$ in the 
statement of Assumption~\ref{bnd-S} with $n$. Therefore, by Lemma~\ref{errS}, 
it holds that 
\[
0 \leq \tr(\uS^{(n)})^k - \sum_{\bJ \in \cD_k} \uS^{(n)}_{\bJ} \leq C / n.
\]
Thus:
\[
\E \tr(S^{(n)})^k = \tr(\uS^{(n)})^k + o_n(1). 
\]
To obtain~\eqref{S-barS}, it remains to show that 
$\var \tr(S^{(n)})^k \to_n 0$. We have here 
\[
\var \tr(S^{(n)})^k = \frac{1}{K_n^{2k}} 
\sum_{P_1,P_2 \in \mathbf{P}_k} \sum_{\bJ_1 \in P_1([n]),\bJ_2 \in P_2([n])}
\E[(B^{(n)}_{\bJ_1} - \E B^{(n)}_{\bJ_1})
    (B^{(n)}_{\bJ_2} - \E B^{(n)}_{\bJ_2})]. 
\]
Clearly, the summand is zero when the walks $P_1$ and $P_2$ have no common
vertex. When these walks have a common vertex, we have by a similar argument as
above that 
\begin{align*}
& \frac{1}{K_n^{2k}} 
\sum_{\bJ_1 \in P_1([n]),\bJ_2 \in P_2([n])}
\E[(B^{(n)}_{\bJ_1} - \E B^{(n)}_{\bJ_1})
    (B^{(n)}_{\bJ_2} - \E B^{(n)}_{\bJ_2})] \\ 
&\leq 
\frac{C}{K_n^{2k}}  \left(
\frac{K_n}{n}\right)^{|\{\text{distinct edges of } P_1\}| + 
 |\{\text{distinct edges of } P_2\}| }
 n^{|\{\text{vertices of } P_1 \}| + 
 |\{ \text{vertices of } P_2\}| -1} \\
&\leq \frac{C}{n} . 
\end{align*}
Thus, $\var \tr(S^{(n)})^k \to_n 0$, and Proposition~\ref{ER} is proven. 

\subsection{Corollary~\ref{ERnovap}: Sketch of proof}  
\label{prf-cor} 
Given a small $\varepsilon > 0$, let $\beta > 0$ be such that 
$\det\left( \bI - (1 - \varepsilon/2) \bS \right) \geq \beta$, and define the 
probability event 
\[
\cE_n = \left[ \vertiii{S^{(n)}} \leq 2 C_S \ \text{and} \ 
 \min_{\gamma \in [0, 1 - \varepsilon / 2]} 
  \det\left( I_n - \gamma S^{(n)} \right) \geq \beta / 2 \right]. 
\]
Then we have 
\[
\PP\left[ \rho(X^{(n)}) \geq 1 + \varepsilon \right] \leq 
\PP\left[ [ \rho(X^{(n)}) \geq 1 + \varepsilon ] \ | \ \cE_n \right] 
 + \PP \left[ \cE_n^{\text{c}} \right]. 
\]
By adjusting $\delta$ in the statement and the proof of Theorem~\ref{th-chpol},
we obtain by a slight modification of the proof of Theorem~\ref{th-main} that 
$\PP\left[ [ \rho(X^{(n)}) \geq 1 + \varepsilon ] \ | \ \cE_n \right] \to_n 0$. 
It is clear by Proposition~\ref{ER} that 
$\PP \left[ \cE_n^{\text{c}} \right] \to_n 0$, and we obtain the 
spectral confinement result 
$\PP\left[ \rho(X^{(n)}) \geq 1 + \varepsilon \right] \to_n 0$. 

\subsection{Proposition~\ref{Reg}: Sketch of proof} 
As for the previous proposition, we show that 
$q_n^S(z) = \det(I_n - z S^{(n)})$ converges in probability to 
$\det(\bI - z \bS)$ in $\HH$. 

Using that the rows of $R^{(n)}$ are independent and that 
$\E R^{(n)}_{ij} = K_n / n$, thus, 
$\E S^{(n)}_{ij} = \uS^{(n)}_{ij}$, we obtain that 
$\E|q_n^S(z)| \leq \perm(I + |z| \uS^{(n)})$, hence the tightness of 
$(q_n^S)$.  

We now quickly show that $\tr (S^{(n)})^k \toprobashort \tr\bS^k$ for 
each fixed integer $k > 0$. 
Let $E^{(n)} = [ E^{(n)}_{ij} ]_{i,j=1}^n$ be a random matrix with 
i.i.d.~Bernoulli elements such that $\E E^{(n)}_{11} = K_n / n$, and define 
the random matrix $Y^{(n)} = [ Y^{(n)}_{ij} ]_{i,j=1}^n$ as 
$Y^{(n)}_{ij} = (n/K_n) \uS^{(n)}_{ij} E^{(n)}_{ij}$. Along the principle of 
the proof of Proposition~\ref{ER}, one can prove that 
$\E \tr (Y^{(n)})^k \to_n \tr\bS^k$ and $\var\tr (Y^{(n)})^k \to_n 0$. To show 
that $\tr (S^{(n)})^k \toprobashort \tr\bS^k$, it is enough to show that 
$\E \tr (S^{(n)})^k - \E \tr (Y^{(n)})^k \to_n 0$ and 
$\E(\tr (S^{(n)})^k)^2 - \E(\tr (Y^{(n)})^k)^2 \to_n 0$. 
 
Given a fixed integer $\ell > 0$ and integers $i_1,\ldots i_\ell \in [n]$ 
which are all different, it holds that 
\[
\E R^{(n)}_{1i_1} \ldots R^{(n)}_{1i_\ell} = 
 \PP[ (R^{(n)}_{1i_1}, \ldots, R^{(n)}_{1i_\ell}) = (1,\ldots,1)] 
 = \frac{\binom{n-\ell}{K_n-\ell}}{\binom{n}{K_n}} 
 = \frac{(K_n-\ell+1)\times\cdots\times K_n}{(n-\ell+1)\times\cdots\times n},   
\]
while 
\[
\E E^{(n)}_{1i_1} \ldots E^{(n)}_{1i_\ell} = \left(\frac{K_n}{n}\right)^\ell.
\]
We therefore have after a small calculation that 
\begin{equation} 
\label{R-B} 
0 \leq 
 \E E^{(n)}_{1i_1} \ldots E^{(n)}_{1i_\ell} -  
 \E R^{(n)}_{1i_1} \ldots R^{(n)}_{1i_\ell} \leq 
 \frac{C_\ell}{K_n} \E E^{(n)}_{1i_1} \ldots E^{(n)}_{1i_\ell} , 
\end{equation} 
where $C_\ell$ depends on $\ell$ only. Re-using the notations of the proof of 
Proposition~\ref{ER}, we now have 
\begin{align*} 
0\leq \E \tr(Y^{(n)})^k - \E \tr (S^{(n)})^k  &= \frac{n^k}{K_n^k} 
\sum_{P \in \mathbf{P}_k} \sum_{\bJ \in P([n])} 
\uS^{(n)}_{\bJ}\left( \E E^{(n)}_{\bJ} - \E R^{(n)}_{\bJ} \right) \\ 
 &\leq 
\frac{C}{K_n} \frac{n^k}{K_n^k} 
   \sum_{P \in \mathbf{P}_k} \sum_{\bJ \in P([n])} 
   \uS^{(n)}_{\bJ} \E E^{(n)}_{\bJ} \\
 &= \frac{C}{K_n} \E \tr(Y^{(n)})^k , 
\end{align*}  
where, to obtain the inequality, we decomposed $\E R^{(n)}_{\bJ}$ into a 
product of expectations over the different rows of $R^{(n)}$, and we used
the bound~\eqref{R-B}. 
Since $\E \tr (Y^{(n)})^k \to_n \tr\bS^k$, we obtain that 
$\E\tr(S^{(n)})^k - \E\tr(Y^{(n)})^k\to_n 0$. 
The proof that 
$\E(\tr (S^{(n)})^k)^2 - \E(\tr (Y^{(n)})^k)^2 \to_n 0$ is similar. 

\nocite{*} 


\begin{funding}
M.L.~received funding by the European Union’s Horizon 
 2020 research and innovation program under the
 Marie Sklodowska-Curie grant agreement No 101034255.
\end{funding}
\end{document}